\DeclareMathOperator{\Coh}{\textup{Coh}}
\DeclareMathOperator{\GL}{\textup{GL}}
\DeclareMathOperator{\Homsh}{\underline{\textup{Hom}}}
\DeclareMathOperator{\Spec}{\textup{Spec}}
\DeclareMathOperator{\et}{\textup{et}}
\theoremstyle{plain}
\newtheorem{thm}{Theorem}[section]\setcounter{thm}{0}
\newtheorem*{thm*}{Theorem}
\newtheorem{lem}[thm]{Lemma}
\newtheorem{cor}[thm]{Corollary}
\newtheorem{prop}[thm]{Proposition}
\theoremstyle{remark}
\newtheorem{rmk}[thm]{Remark}
\newtheorem*{rmk*}{Remark}
\newtheorem{ex}[thm]{Example}
\theoremstyle{definition}
\newtheorem{defn}[thm]{Definition} 
\newtheorem{const}[thm]{Construction}
\newtheorem*{const*}{Construction}
\newtheorem{conv}[thm]{Conventions}
\newtheorem{sett}[thm]{Setting}
\theoremstyle{plain}
\newtheorem{thmI}{Theorem}
\newtheorem{thmII}{Theorem}
\newtheorem{thmIII}{Theorem}
\newtheorem{thmIV}{Theorem}
\def\hatX{{\hat X}}
\def\sYrho{{\hat Y}}
\begin{document}
\title[Meromorphic descent and the pro-\'etale fundamental group]{A theorem on meromorphic descent and the specialization of the pro-\'etale fundamental group}
\author{Marcin Lara, Jiu-Kang Yu, Lei Zhang}

\address{Marcin LARA\\ Instytut Matematyczny PAN\\ Śniadeckich 8\\ Warsaw, Poland}
\email{marcin.lara@impan.pl}

\address{ 
Jiu-Kang YU\\
     The Chinese University of Hong Kong\\
    Department of Mathematics\\    
    Shatin, New Territories\\ Hong Kong }
\email{jkyu@ims.cuhk.edu.hk}

 \address{Lei ZHANG\\
    The Chinese University of Hong Kong\\
    Department of Mathematics\\    
    Shatin, New Territories\\ Hong Kong }
\email{lzhang@math.cuhk.edu.hk} 

\date{\today}

\global\long\def\A{\mathbb{A}}

\global\long\def\Ab{(\textup{Ab})}

\global\long\def\C{\mathbb{C}}

\global\long\def\Cat{(\textup{Cat})}

\global\long\def\Di#1{\textup{D}(#1)}

\global\long\def\E{\mathbb{E}}

\global\long\def\F{\mathbb{F}}

\global\long\def\GCov{G\textup{-Cov}}

\global\long\def\Gcat{(\textup{Galois cat})}

\global\long\def\Gfsets#1{#1\textup{-fsets}}

\global\long\def\Gm{\mathbb{G}_{m}}

\global\long\def\GrCov#1{\textup{D}(#1)\textup{-Cov}}

\global\long\def\Grp{(\textup{Grps})}

\global\long\def\Gsets#1{(#1\textup{-sets})}

\global\long\def\HCov{H\textup{-Cov}}

\global\long\def\MCov{\textup{D}(M)\textup{-Cov}}

\global\long\def\MHilb{M\textup{-Hilb}}

\global\long\def\N{\mathbb{N}}

\global\long\def\PGor{\textup{PGor}}

\global\long\def\PGrp{(\textup{Profinite Grp})}

\global\long\def\PP{\mathbb{P}}

\global\long\def\Pj{\mathbb{P}}

\global\long\def\Q{\mathbb{Q}}

\global\long\def\RCov#1{#1\textup{-Cov}}

\global\long\def\RR{\mathbb{R}}

\global\long\def\Sch{\textup{Sch}}

\global\long\def\WW{\textup{W}}

\global\long\def\Z{\mathbb{Z}}

\global\long\def\acts{\curvearrowright}

\global\long\def\alA{\mathscr{A}}

\global\long\def\alB{\mathscr{B}}

\global\long\def\arr{\longrightarrow}

\global\long\def\arrdi#1{\xlongrightarrow{#1}}

\global\long\def\catC{\mathscr{C}}

\global\long\def\catD{\mathscr{D}}

\global\long\def\catF{\mathscr{F}}

\global\long\def\catG{\mathscr{G}}

\global\long\def\comma{,\ }

\global\long\def\covU{\mathcal{U}}

\global\long\def\covV{\mathcal{V}}

\global\long\def\covW{\mathcal{W}}

\global\long\def\duale#1{{#1}^{\vee}}

\global\long\def\fasc#1{\widetilde{#1}}

\global\long\def\fsets{(\textup{f-sets})}

\global\long\def\iL{r\mathscr{L}}

\global\long\def\id{\textup{id}}

\global\long\def\la{\langle}

\global\long\def\odi#1{\mathcal{O}_{#1}}

\global\long\def\ra{\rangle}

\global\long\def\set{(\textup{Sets})}

\global\long\def\sets{(\textup{Sets})}

\global\long\def\shA{\mathcal{A}}

\global\long\def\shB{\mathcal{B}}

\global\long\def\shC{\mathcal{C}}

\global\long\def\shD{\mathcal{D}}

\global\long\def\shE{\mathcal{E}}

\global\long\def\shF{\mathcal{F}}

\global\long\def\shG{\mathcal{G}}

\global\long\def\shH{\mathcal{H}}

\global\long\def\shI{\mathcal{I}}

\global\long\def\shJ{\mathcal{J}}

\global\long\def\shK{\mathcal{K}}

\global\long\def\shL{\mathcal{L}}

\global\long\def\shM{\mathcal{M}}

\global\long\def\shN{\mathcal{N}}

\global\long\def\shO{\mathcal{O}}

\global\long\def\shP{\mathcal{P}}

\global\long\def\shQ{\mathcal{Q}}

\global\long\def\shR{\mathcal{R}}

\global\long\def\shS{\mathcal{S}}

\global\long\def\shT{\mathcal{T}}

\global\long\def\shU{\mathcal{U}}

\global\long\def\shV{\mathcal{V}}

\global\long\def\shW{\mathcal{W}}

\global\long\def\shX{\mathcal{X}}

\global\long\def\shY{\mathcal{Y}}

\global\long\def\shZ{\mathcal{Z}}

\global\long\def\st{\ | \ }

\global\long\def\stA{\mathcal{A}}

\global\long\def\stB{\mathcal{B}}

\global\long\def\stC{\mathcal{C}}

\global\long\def\stD{\mathcal{D}}

\global\long\def\stE{\mathcal{E}}

\global\long\def\stF{\mathcal{F}}

\global\long\def\stG{\mathcal{G}}

\global\long\def\stH{\mathcal{H}}

\global\long\def\stI{\mathcal{I}}

\global\long\def\stJ{\mathcal{J}}

\global\long\def\stK{\mathcal{K}}

\global\long\def\stL{\mathcal{L}}

\global\long\def\stM{\mathcal{M}}

\global\long\def\stN{\mathcal{N}}

\global\long\def\stO{\mathcal{O}}

\global\long\def\stP{\mathcal{P}}

\global\long\def\stQ{\mathcal{Q}}

\global\long\def\stR{\mathcal{R}}

\global\long\def\stS{\mathcal{S}}

\global\long\def\stT{\mathcal{T}}

\global\long\def\stU{\mathcal{U}}

\global\long\def\stV{\mathcal{V}}

\global\long\def\stW{\mathcal{W}}

\global\long\def\stX{\mathcal{X}}

\global\long\def\stY{\mathcal{Y}}

\global\long\def\stZ{\mathcal{Z}}

\global\long\def\then{\ \Longrightarrow\ }

\global\long\def\L{\textup{L}}

\global\long\def\l{\textup{l}}

\newcommand{\B}{{\mathbb B}}
\newcommand{\D}{{\mathbb D}}
\newcommand{\G}{{\mathbb G}}
\renewcommand{\H}{{\mathbb H}}
\newcommand{\I}{{\mathbb I}}
\newcommand{\J}{{\mathbb J}}
\newcommand{\M}{{\mathbb M}}
\renewcommand{\P}{{\mathbb P}}
\newcommand{\R}{{\mathbb R}}
\newcommand{\T}{{\mathbb T}}
\newcommand{\U}{{\mathbb U}}
\newcommand{\V}{{\mathbb V}}
\newcommand{\W}{{\mathbb W}}
\newcommand{\X}{{\mathbb X}}
\newcommand{\Y}{{\mathbb Y}}

\newcommand{\sA}{{\mathcal A}}
\newcommand{\sB}{{\mathcal B}}
\newcommand{\sC}{{\mathcal C}}
\newcommand{\sD}{{\mathcal D}}
\newcommand{\sE}{{\mathcal E}}
\newcommand{\sF}{{\mathcal F}}
\newcommand{\sG}{{\mathcal G}}
\newcommand{\sH}{{\mathcal H}}
\newcommand{\sI}{{\mathcal I}}
\newcommand{\sJ}{{\mathcal J}}
\newcommand{\sK}{{\mathcal K}}
\newcommand{\sL}{{\mathcal L}}
\newcommand{\sM}{{\mathcal M}}
\newcommand{\sN}{{\mathcal N}}
\newcommand{\sO}{{\mathcal O}}
\newcommand{\sP}{{\mathcal P}}
\newcommand{\sQ}{{\mathcal Q}}
\newcommand{\sR}{{\mathcal R}}
\newcommand{\sS}{{\mathcal S}}
\newcommand{\sT}{{\mathcal T}}
\newcommand{\sU}{{\mathcal U}}
\newcommand{\sV}{{\mathcal V}}
\newcommand{\sW}{{\mathcal W}}
\newcommand{\sX}{{\mathcal X}}
\newcommand{\sY}{{\mathcal Y}}
\newcommand{\sZ}{{\mathcal Z}}


\newcommand{\Aff}{{\rm Aff}}
\newcommand{\Aut}{{\rm Aut}}
\newcommand{\an}{{\rm an}}
\newcommand{\Bd}{{\rm Band}}
\newcommand{\Cats}{{\rm Cats}}
\newcommand{\ch}{\textup{Ch}}
\newcommand{\Char}{{\rm char}}
\newcommand{\codim}{{\rm codim}}
\newcommand{\cont}{{\rm cont}}
\newcommand{\Cov}{\textup{Cov}}
\newcommand{\Crys}{{\rm Crys}}
\newcommand{\cts}{\textup{cts}}
\newcommand{\Div}{{\rm Div}}
\newcommand{\Dmod}{{\rm Dmod}}
\newcommand{\ECov}{{\rm ECov}}
\newcommand{\ed}{{\rm ed}}
\newcommand{\Ess}{{\rm EFin}}
\renewcommand{\et}{\textup{\'et}}
\newcommand{\ev}{\textup{ev}}
\newcommand{\Fdiv}{{\rm Fdiv}}
\newcommand{\Fib}{{\rm Fib}}
\newcommand{\FSets}{{\rm FSets}}
\newcommand{\FtAff}{{\rm FtAff}}
\newcommand{\Gal}{{\rm Gal}}
\newcommand{\height}{\textup{ht}}
\newcommand{\Hom}{{\rm Hom}}
\newcommand{\iinf}{\textup{inf}}
\newcommand{\im}{{\rm im}}
\newcommand{\Ker}{{\rm Ker}}
\newcommand{\LL}{\textup{L}}
\newcommand{\Loc}{{\rm Loc}}
\newcommand{\Max}{{\rm Max \ }}
\newcommand{\MIC}{\mbox{MIC}}
\newcommand{\Min}{{\rm Min \ }}
\newcommand{\NN}{\textup{N}}
\newcommand{\Mod}{\text{\sf Mod}}
\newcommand{\Noohi}{\textup{Noohi}}
\newcommand{\perf}{\textup{perf}}
\newcommand{\pet}{{\textup{proét}}}
\newcommand{\Pic}{{\rm Pic}}
\newcommand{\Rep}{\text{\sf Rep}}
\newcommand{\Res}{{\rm Res}}
\newcommand{\rank}{{\rm rank}}
\newcommand{\red}{{\rm red}}
\newcommand{\Sets}{\textup{Sets}}
\newcommand{\Spf}{\textup{Spf}}
\newcommand{\spe}{\textup{sp}}
\newcommand{\str}{\textup{str}}
\newcommand{\strat}{{\rm Str}}
\newcommand{\sym}{\text{Sym}}
\newcommand{\tp}{{\rm top}}
\newcommand{\Tr}{{\rm Tr}}
\newcommand{\trace}{{\rm Tr}}
\newcommand{\vect}{\text{\sf vect}}
\newcommand{\Vect}{\text{\sf Vect}}




\setcounter{section}{0}
\maketitle

\begin{abstract} Given a Noetherian formal scheme $\hat X$
    over
    $\Spf(R)$, where $R$ is a complete \textup{DVR}, we first prove a theorem of
    meromorphic descent along a possibly infinite cover of
    $\hat{X}$. Using
    this we construct a
    specialization functor from the category of continuous
    representations of the pro-étale fundamental group of the
    special fiber to the category of $F$-divided sheaves on the
    generic fiber. This specialization functor is compatible with the
    specialization functor of the étale fundamental groups. We also express the pro-étale
    fundamental group of a connected scheme $X$ of finite type over a field as
    coproducts and quotients of the free group and the étale fundamental groups of
    the normalizations of the irreducible components of $X$ and those
    of its singular loci.
\end{abstract}

\section*{Introduction}

Let \(X\) be a connected locally topologically notherian scheme.  
In \cite[Definition 7.4.2]{BS15}, B.~Bhatt and P.~Scholze introduced
the {\it pro-\'etale fundamental group} \(\pi_1^\pet(X)\) of \(X\), a
topological group
which classifies the {\it geometric covers} of \(X\) (cf.~\cite[Definition
7.3.1]{BS15}).  The geometric covers include the finite \'etale
covers.  Therefore, \(\pi_1^\pet(X)\) refines Grothendieck's \'etale
fundamental group \(\pi_1^\et(X)\), which classifies the finite
\'etale covers.  In fact, there is a natural morphism
\(\pi_1^\pet(X)\to \pi_1^\et(X)\) which makes \(\pi_1^\et(X)\) the
profinite completion of \(\pi_1^\pet(X)\) (in this introduction we
ignore the important issue of base points for simplicity of exposition).

We have \(\pi_1^\pet(X)=\pi_1^\et(X)\) when \(X\) is normal.  If \(X\)
is a degenerate curve (cf.~\cite{Mum72})  over an algebraically closed
field, then \(\pi_1^\pet(X)\) is a discrete free group (cf.~Theorem IV
below), a fact that naturally
occurred in the theory of Mumford curves (cf.~\cite{Mum72}), but could
only be phrased in an ad hoc way previously.

Let \(R\) be a complete discrete valuation ring (DVR) with
\({\rm Frac}(R)=K\), and let \(X\) be a proper scheme over \(\Spec R\).  
Let $X_0$ be the special fiber and $X_K$ the generic
fiber.  Grothendieck's {\it specialization map} 
\[
    {\rm sp}_{\et}\colon\pi_1^\et(X_K)\arr \pi_1^\et(X_0)
\]
has been a fundamental tool for studying the \'etale fundamental
groups.  But in general there is no analogous map \(\pi_1^\pet(X_K)\to
\pi_1^\pet(X_0)\), as pointed out by \cite[Remark 3.2]{Elena18}.  For
example, assume that the residue field of \(R\) is
algebraically closed and \(X_K\) is a Mumford curve, then any map
\(\pi_1^\pet(X_K)\to \pi_1^\pet(X_0)\), being a map from a profinite
group to a discrete free group, is necessarily trivial.  

E. Lavanda overcame this difficulty in a special case by constructing
a commutative diagram of \(\bar K\)-group schemes:
\[
  \xymatrix{
      \pi_1^{\rm Fdiv}(X_{\bar K}) \ar@{->}[rr]^{{\rm
      sp}_{\pet}}\ar@{->}[d]&&\pi_1^\pet(X_0)_{\bar K}\ar@{->}[d]\\
      \pi_1^\et(X_{\bar K})_{\bar K} \ar@{->}[rr]^{{\rm
      sp}_{\et}}&&\pi_1^\et(X_0)_{\bar K},
}
\]
where \({\rm sp}_\pet\) is a new specailization map constructed by
Lavanda, \(\pi_1^{\rm Fdiv}(X)\) is a Tannakian fundamental group
classifying \(F\)-divided sheaves, and for a topological group \(\pi\),
\(\pi_{\bar K}\) is the algebraic hull of \(\pi\) over \(\bar K\)
(cf.~\nameref{s:notation} (2), (4)).
For a profinite \(\pi\), \(\pi_{\bar K}\) is the rather transparent formation
of the constant group scheme associated to \(\pi\) (cf.~\nameref{s:notation}
(3)). 
Thus the above
commutative diagrams means that Lavanda's specialization map lifts Grothendieck's.  The vertical arrow on the left is a
functorial surjection (cf.~\cite[Proposition 13,
\S2.4]{dS2007}, \cite[Theorem I]{TZ1}).

Lavanda's hypothesis for the above theory is that \(X\) is a
semi-stable curve over $R$ whose generic fiber is smooth.  This is
quite restrictive.   The first main result in this paper is to do the
theory in much greater generality (another
approach of generalizing the specialization map by considering  the de Jong fundamental group of the
rigid generic fiber can be found in \cite{ALY21}):

\begin{thmI}[cf.~\ref{Specialization of the pro-etale
    fundamental group when R is strictly Henselian}]\label{first main result} Let $R$ be  a complete discrete
    valuation ring of equal characteristic $p>0$, and
    let $X$ be a proper scheme over $R$ with
    connected fibers \(X_K\) and \(X_0\). Assume further that the residue
    field of \(R\) is separably closed and \(X(R)\neq\emptyset\).  Then there is a commutative
    diagram
\[
  \xymatrix{
      \pi_1^{\rm Fdiv}(X_{K}) \ar@{->}[rr]^{{\rm sp}_{\pet}}\ar@{->}[d]&&\pi_1^\pet(X_0)_{K}\ar@{->}[d]\\
      \pi_1^\et(X_{K})_{K} \ar@{->}[rr]^{{\rm sp}_{\et}}&&\pi_1^\et(X_0)_{K},
}
\]
\end{thmI}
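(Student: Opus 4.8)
The plan is to obtain the top arrow $\mathrm{sp}_\pet$ from a \emph{specialization functor} and then to check commutativity of the square on representations with finite image. Concretely, one constructs a $K$-linear, exact $\otimes$-functor
\[
\mathrm{Sp}\colon \mathrm{Rep}_K\!\big(\pi_1^\pet(X_0)_K\big)\longrightarrow \mathrm{FDiv}(X_K),
\]
from continuous finite-dimensional $K$-representations of $\pi_1^\pet(X_0)$ to $F$-divided sheaves on $X_K$, carrying the forgetful fiber functor to the fiber functor ``fiber at $s_K$'', where $s_K\in X_K(K)$ is the point induced by the section $s\in X(R)$ provided by the hypothesis $X(R)\neq\emptyset$ (this section also gives compatible geometric base points on $X_0$ and $X_K$, and $X_K(K)\neq\emptyset$ forces $X_K$ geometrically connected). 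Tannakian duality then yields $\mathrm{sp}_\pet$. Since the left vertical arrow is the functorial surjection of \cite[Proposition 13, \S2.4]{dS2007}, \cite[Theorem I]{TZ1}, the right vertical arrow is induced by $\pi_1^\pet(X_0)\to\widehat{\pi_1^\pet(X_0)}=\pi_1^\et(X_0)$, and $\mathrm{sp}_\et$ is Grothendieck's specialization (available because $X$ is proper over $R$, from $\pi_1^\et(X_K)\to\pi_1^\et(X)\xleftarrow{\ \sim\ }\pi_1^\et(X_0)$) base-changed to $K$, commutativity of the square amounts to the assertion that $\mathrm{Sp}$, restricted to representations with finite image, agrees with Grothendieck's specialization followed by the embedding $\mathrm{Rep}_K(\pi_1^\et(X_K)_K)\hookrightarrow\mathrm{FDiv}(X_K)$.

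To build $\mathrm{Sp}$, let $\rho$ act on $V$. Since $\rho$ is continuous with $K$ discrete, $N:=\ker\rho$ is an open normal subgroup, so by the infinite Galois theory of \cite{BS15} the discrete quotient $Q:=\pi_1^\pet(X_0)/N$ is realized by a connected geometric cover $\hat Y_0\to X_0$ that is a pro-\'etale $Q$-torsor, through which $\rho$ yields the pro-\'etale $K$-local system attached to it. Because $\hat Y_0\to X_0$ is \'etale and $X_0\hookrightarrow\hat X$ is a formal thickening, it lifts uniquely and $Q$-equivariantly to an \'etale morphism of formal schemes $\hat Y\to\hat X$ (lift along each infinitesimal neighborhood $X_n$ and take the colimit). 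Passing to the generic fiber gives a (generally infinite) $Q$-equivariant \'etale cover $\hat Y_K\to X_K$, and the trivial bundle $V\otimes_K\mathcal O_{\hat Y_K}$, made $Q$-equivariant by $\rho$ on $V$ and by translation on $\hat Y_K$, descends by the meromorphic descent theorem to a coherent sheaf $E_0$ on $X_K$. Using a common $Q$-torsor to trivialize a finite family of representations, together with flatness of $-\otimes_K\mathcal O_{\hat Y_K}$ and exactness of descent, one checks that $\rho\mapsto E_0$ is well defined, functorial, exact and $\otimes$-compatible; and computing the fiber at $s_K$ along a lift of $s_0$ to $\hat Y_0$ — which exists because $k$ is separably closed, so $\hat Y_0\times_{X_0}s_0\cong\underline Q$ — gives $(E_0)_{s_K}\cong V$, naturally in $\rho$.

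To endow this with an $F$-divided structure I use the equal-characteristic hypothesis. The relative Frobenius $F_{X_0/k}\colon X_0\to X_0^{(p)}$ is a universal homeomorphism, hence pullback along it is an equivalence on geometric covers; so $\hat Y_0\to X_0$ descends canonically to a pro-\'etale $Q$-torsor $\hat Y_0'\to X_0^{(p)}$ with $\hat Y_0=\hat Y_0'\times_{X_0^{(p)},F}X_0$. Lifting $\hat Y_0'$ to $\hat Y'\to\hat X^{(p)}$ and using uniqueness of \'etale lifts, $\hat Y=\hat Y'\times_{\hat X^{(p)}}\hat X$ over the relative Frobenius $F_{\hat X/R}$. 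Running the previous paragraph over $\hat X^{(p)}$ produces $E_1$ on $X_K^{(p)}$, and since the pullback of the trivial $Q$-equivariant bundle along $\hat Y_K\to\hat Y_K'$ is again the trivial $Q$-equivariant bundle, meromorphic descent yields a canonical isomorphism $F_{X_K/K}^{\,*}E_1\cong E_0$. Iterating over the Frobenius tower of $X_0$ gives $\mathrm{Sp}(\rho):=(E_i)_{i\ge 0}\in\mathrm{FDiv}(X_K)$, and these isomorphisms are compatible in $\rho$ and with $\oplus$, $\otimes$; thus $\mathrm{Sp}$ is the required functor.

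Finally, for finite-monodromy $\rho$ one may take $Q$ finite, so $\hat Y_0\to X_0$ is finite \'etale, its lift $\hat Y\to\hat X$ is finite \'etale, and by formal GAGA (here $X$ is proper over $R$) it algebraizes to a finite \'etale cover $Y\to X$; by construction $\mathrm{Sp}(\rho)$ is the local system on $X_K$ determined by $Y_K\to X_K$ together with its tautological $F$-divided structure, which is exactly the image of Grothendieck's specialization of $\rho$ under $\mathrm{Rep}_K(\pi_1^\et(X_K)_K)\hookrightarrow\mathrm{FDiv}(X_K)$. This gives the commutative square. I expect the main obstacle to be the descent step: one must know that meromorphic descent along the infinite, non-finite-type cover $\hat Y\to\hat X$ is effective and produces coherent sheaves — precisely our meromorphic descent theorem — and one must verify that the Frobenius comparisons genuinely assemble into an $F$-divided sheaf, for which equal characteristic and the uniqueness of \'etale lifts are essential; by contrast the finite-monodromy compatibility and the Tannakian bookkeeping are routine.
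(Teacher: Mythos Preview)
Your overall strategy is the same as the paper's: build a $K$-linear exact $\otimes$-functor $\mathrm{Sp}\colon \Rep_K^{\mathrm{cts}}(\pi_1^\pet(X_0))\to \Fdiv(X_K)$ via meromorphic descent, check fiber-functor compatibility at the section $s$, and then verify that on finite-image representations it recovers Grothendieck's specialization via formal GAGA. The Frobenius part and the finite-monodromy compatibility are handled just as in the paper (the paper phrases the $F$-divided structure via the Cartesian squares $\hat Y^{(i)}\to \hat X^{(i)}$ being base-change along relative Frobenius because the torsors are \'etale, which is exactly what your ``descend the torsor along Frobenius'' argument amounts to).

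There is, however, one step that as written does not go through. You say ``passing to the generic fiber gives a (generally infinite) $Q$-equivariant \'etale cover $\hat Y_K\to X_K$'' and then descend $V\otimes_K\mathcal O_{\hat Y_K}$ along this map. But $\hat Y$ is not proper (or even of finite type) over $\Spf(R)$ when $Q$ is infinite, so it does not algebraize: there is no scheme $\hat Y_K$ and no map $\hat Y_K\to X_K$ of schemes along which to descend. (A rigid-analytic generic fiber exists, but then you would need a different descent theorem and a separate GAGA comparison to get back to coherent sheaves on the algebraic $X_K$.) The meromorphic descent theorem of the paper is a statement at the \emph{formal} level: it is an equivalence
\[
\Coh(\hat X)\otimes_R K \;\xrightarrow{\ \sim\ }\; \Coh^m(\hat Y/\hat X,\,Q),
\]
not a descent along any generic-fiber map. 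The correct route, which the paper takes, is: view $V\otimes_R\mathcal O_{\hat Y}$ with its tautological $Q$-action as an object of $\Coh^m(\hat Y/\hat X,Q)$; apply the meromorphic descent theorem to obtain an object of $\Coh(\hat X)\otimes_R K$; and \emph{then} invoke Grothendieck's existence theorem (which applies because $X$, not $\hat Y$, is proper over $R$) to identify $\Coh(\hat X)\otimes_R K\simeq \Coh(X_K)$ and so obtain $E_0\in\Coh(X_K)$. You in fact allude to descent along $\hat Y\to\hat X$ in your final paragraph, so this may just be an inconsistency in exposition; but the intermediate passage through $\Coh(\hat X)\otimes_R K$ via Grothendieck existence is essential and is missing from your construction of $E_0$. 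Once you insert it, your proof matches the paper's.
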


We remark that for a more general $R$-scheme $X$ the hypothesis \(X(R)\neq\emptyset\) is applicable upon
replacing \(R\) by a finite extension (cf.~\ref{Specialization of the etale fundamental
    group when R' is finite over R}).  Furthermore, the above
  commutative diagram comes from a refinement which is a
  \(2\)-commutative diagram of Tannakian categories, and this refined
  result is valid without assuming either \(X(R)\neq\emptyset\) or that
  \(R\) has a separably closed residue field (cf.~\ref{new construction
  recovers the old one}).

\bigbreak
There are two main ingredients in E. Lavanda's construction of $\spe_\pet$:
\begin{enumerate}[label=(\Alph*)]
    \item A formula for the pro-étale fundamental group of a semi-stable curve
        \cite[Theorem 1.17]{Elena18}; \item The theory of meromorphic descent
along a torsor under a torsion-free discrete group on a curve, developed by D. Gieseker in 
\cite[Lemma 1]{Gi73}. \end{enumerate}

Her construction of $\spe_\pet$ works only
for semi-stable curves
because of both (A) and (B).  In this paper, we generalize (B)
considerably by the following theorem:

\begin{thmII}[cf.~\ref{meromorphic descent for non Noetherian stuff}]\label{meromorphic descent for non Noetherian stuff0} If $q\colon \hat{Y}\to\hat{X}$ is a $G$-torsor, where $G$ is any discrete
    group, and if $\hat{X}$ is Noetherian, then the pullback
    along $q$ induces an equivalence between
    $\Coh(\hat{X})\otimes_RK$ and the category of sheaves in
    $\Coh(\hat{Y})\otimes_RK$ equipped with meromorphic descent
    data.
\end{thmII}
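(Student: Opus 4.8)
\emph{Plan of the proof.} The plan is to show that $q^{*}$---which carries $\mathcal{G}\in\Coh(\hat{X})$, equipped with its canonical (integral, hence meromorphic) descent datum, to an object of the target category---is fully faithful and essentially surjective. Fix a uniformizer $\varpi$ of $R$, so $-\otimes_{R}K$ is inversion of $\varpi$. Since $q$ is a $G$-torsor with $G$ discrete, $\hat{Y}\times_{\hat{X}}\hat{Y}\cong\coprod_{g\in G}\hat{Y}$, under which $p_{1}$ is the fold map and $p_{2}=\coprod_{g}\rho_{g}$ with $\rho_{g}$ the action of $g$. A meromorphic descent datum on $\mathcal{F}\in\Coh(\hat{Y})$ is therefore an isomorphism in $\Coh\bigl(\coprod_{g}\hat{Y}\bigr)\otimes_{R}K$, that is, an element of $\bigl(\prod_{g\in G}\Hom_{\Coh(\hat{Y})}(\mathcal{F},\rho_{g}^{*}\mathcal{F})\bigr)\otimes_{R}K$ satisfying the unit and cocycle identities. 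The crucial observation is that $\Hom$-groups in $\Coh(-)\otimes_{R}K$ have \emph{bounded denominators}: there is one integer $n_{0}$ such that, writing the datum as a family $\{\phi_{g}\colon\mathcal{F}\to\rho_{g}^{*}\mathcal{F}\}_{g}$, all of $\varpi^{n_{0}}\phi_{g}$ and $\varpi^{n_{0}}\phi_{g}^{-1}$ are $\mathcal{O}_{\hat{Y}}$-linear simultaneously. Equivalently the $\phi_{g}$ define an action of $G$ on $\mathcal{M}:=\mathcal{F}[1/\varpi]$ lying over the $G$-action on $\hat{Y}$, under which $g\cdot\mathcal{F}\subseteq\varpi^{-n_{0}}\mathcal{F}$ inside $\mathcal{M}$ for every $g$.

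Next I would record the \emph{honest} descent statement: as $q$ is a $G$-torsor and $\hat{X}$ is Noetherian, pullback along $q$ is an equivalence from $\Coh(\hat{X})$ onto the category of coherent $\mathcal{O}_{\hat{Y}}$-modules with $G$-equivariant structure, with quasi-inverse $\mathcal{F}\mapsto(q_{*}\mathcal{F})^{G}$. That $q$ is not quasi-compact when $G$ is infinite is harmless: the claim is local on $\hat{X}$ for the étale topology, where $q$ becomes $\coprod_{g}\hat{X}\to\hat{X}$, and there $q_{*}$ of a $G$-equivariant sheaf is the product over the sheets on which $G$ acts essentially by permutation, so the invariants form a single coherent sheet. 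In particular $\Hom_{\Coh(\hat{X})}(M,N)=\Hom_{\Coh(\hat{Y})}(q^{*}M,q^{*}N)^{G}$. For essential surjectivity, given $(\mathcal{F},\{\phi_{g}\})$ with bound $n_{0}$ put $\mathcal{F}':=\sum_{g\in G}g\cdot\mathcal{F}$ inside $\mathcal{M}$. The unit axiom gives $\mathcal{F}=e\cdot\mathcal{F}\subseteq\mathcal{F}'$ and the uniform bound gives $\mathcal{F}'\subseteq\varpi^{-n_{0}}\mathcal{F}$; since $\hat{Y}$ is locally Noetherian ($q$ being étale, $\hat{X}$ being Noetherian), the sub-$\mathcal{O}_{\hat{Y}}$-module $\mathcal{F}'$ of the coherent sheaf $\varpi^{-n_{0}}\mathcal{F}$ is coherent, and $\mathcal{F}'[1/\varpi]=\mathcal{M}$, so $\mathcal{F}'\cong\mathcal{F}$ in $\Coh(\hat{Y})\otimes_{R}K$. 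By the cocycle condition $G$ permutes $\{g\cdot\mathcal{F}\}_{g}$, so $\mathcal{F}'$ is $G$-stable and acquires an honest $G$-equivariant structure refining $\{\phi_{g}\}$; honest descent then yields $\mathcal{G}\in\Coh(\hat{X})$ with $\mathcal{F}'\cong q^{*}\mathcal{G}$ compatibly with descent data, whence $(\mathcal{F},\{\phi_{g}\})\cong q^{*}\mathcal{G}$ in the target.

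For full faithfulness, $q\circ\rho_{g}=q$ forces the canonical descent datum on $q^{*}M$ to be the identity, so a morphism $q^{*}M\to q^{*}N$ in $\Coh(\hat{Y})\otimes_{R}K$ is descent-compatible precisely when it is $G$-invariant for the $\rho_{g}^{*}$-action on $\Hom_{\Coh(\hat{Y})}(q^{*}M,q^{*}N)[1/\varpi]$. One reduces to $N$ being $\varpi$-torsion-free by replacing $N$ with $N/N_{\mathrm{tors}}$: the torsion subsheaf is coherent, hence (as $\hat{X}$ is quasi-compact) killed by a power of $\varpi$, so the relevant $\Hom$ and $\operatorname{Ext}^{1}$ groups change only by $\varpi$-power torsion, and this commutes with the flat pullback $q^{*}$. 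Then $\Hom_{\Coh(\hat{Y})}(q^{*}M,q^{*}N)$ is $\varpi$-torsion-free, so a $\varpi$-inverted invariant already lies in $\Hom_{\Coh(\hat{Y})}(q^{*}M,q^{*}N)^{G}[1/\varpi]$, which equals $\Hom_{\Coh(\hat{X})}(M,N)[1/\varpi]=\Hom_{\Coh(\hat{X})\otimes_{R}K}(M,N)$ by the preceding paragraph. Hence $q^{*}$ is fully faithful, and with essential surjectivity it is an equivalence.

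The main obstacle is the tension between the non-Noetherianness of $\hat{Y}$---when $G$ is infinite $q$ is not quasi-compact, so fpqc descent is not available off the shelf---and the problem of upgrading a merely meromorphic descent datum to an integral one. The two difficulties cancel each other: a meromorphic descent datum automatically has a uniform denominator, because $\Hom$-groups in $\Coh(-)\otimes_{R}K$ have bounded denominators even over the infinite disjoint union $\coprod_{g}\hat{Y}$; and, because of this bound, the a priori infinite sum $\sum_{g}g\cdot\mathcal{F}$ is still coherent, being trapped between $\mathcal{F}$ and $\varpi^{-n_{0}}\mathcal{F}$. The honest-descent input and the torsion bookkeeping are, by comparison, routine.
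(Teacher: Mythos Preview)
Your argument hinges on the ``crucial observation'' that a meromorphic descent datum on $\mathcal{F}$ is an element of
\[
\Bigl(\prod_{g\in G}\Hom_{\Coh(\hat{Y})}(\mathcal{F},\rho_{g}^{*}\mathcal{F})\Bigr)\otimes_{R}K,
\]
and therefore has a uniform denominator $n_{0}$ working for all $g$ simultaneously. This is a misreading of the definition. In the paper, morphisms in $\Coh^m(\hat{Y})$ are global sections of $\underline{\Hom}\otimes_{R}K$ (Definition~1.2), not $\Hom\otimes_{R}K$; and the datum $\{h_{w}\}_{w\in G}$ is simply a family with each $h_{w}\in\Gamma\bigl(\hat{Y},\underline{\Hom}(\mathcal{F},w^{*}\mathcal{F})\otimes_{R}K\bigr)$ separately (Definition~1.8). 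Because $\hat{Y}$ is \emph{not} quasi-compact when $G$ is infinite, $\Gamma(\hat{Y},\mathcal{H}\otimes_{R}K)$ is strictly larger than $\Gamma(\hat{Y},\mathcal{H})\otimes_{R}K$, and an infinite product does not commute with $\otimes_{R}K$. So neither a single $h_{w}$ nor the family $\{h_{w}\}$ need have a global denominator bound; the cocycle condition does not produce one either. Example~1.6 is placed in the paper precisely to warn that meromorphic descent along infinite covers is genuinely delicate, and the paper remarks that the $G$-torsor result is ``somewhat surprising'' in light of it.

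Once the uniform bound is gone, your construction $\mathcal{F}'=\sum_{g}g\cdot\mathcal{F}$ is no longer trapped inside $\varpi^{-n_{0}}\mathcal{F}$, so there is no reason for it to be coherent, and the essential surjectivity argument collapses. The paper's substitute is Lemma~1.10, proved by Noetherian induction on $\hat{X}$: over a generic point of any closed stratum the torsor is induced from an $H$-torsor with $H$ finite, so Gabber's quasi-compact meromorphic descent (Theorem~1.5) produces an integral model on a quasi-compact open $W$; one then propagates this to all $G$-translates of $W$ and glues with the model already built over the smaller open by a Gieseker--Raynaud extension trick. Your full-faithfulness argument has the same confusion: a morphism in the target category lives in $\Gamma(\hat{Y},\underline{\Hom}\otimes_{R}K)$, not in $\Hom\otimes_{R}K$, and the paper's proof (Theorem~1.11) explicitly uses a quasi-compact open whose $G$-translates cover $\hat{Y}$, together with $G$-invariance, to force a global denominator on a $G$-invariant section---exactly the step you assumed away.
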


Thanks to Theorem~\ref{meromorphic descent for non Noetherian stuff0} we can
directly prove Theorem~\ref{first main result}  without using anything like (A).
On the other hand, we also generalize (A) to much more general schemes,
even though it is not needed in Theorem~\ref{first main result}. We
consider the case when $X$ is a connected Nagata
Noetherian J-2
scheme (e.g.~a connected scheme of finite type over a field). The idea is to express the pro-étale fundamental group
of $X$
in terms of the étale fundamental groups of the normalization of
its irreducible components and the pro-étale fundamental group
of its singular locus. Then using Noetherian induction one may
understand the pro-étale fundamental group based on one's 
knowledge on the étale fundamental groups.

\begin{thmIII}[cf.~\ref{structure theorem when the singular locus is
    connected}]\label{structure theorem in Introduction} Let $X$ be a connected Nagata Noetherian J-2
    scheme. Assume that $X$ is irreducible and the singular locus $Z$ of the reduced
    induced structure of $X$
    is connected.  Let \(\tilde X\) be the normalization of \(X\), and
    let $\{Z_j\}_{1\leq j\leq n}$ be the connected components of the
inverse image of $Z$ under $\tilde{X}\to X$.
   Then \(\pi_1^\pet(X)\) is the coproduct of
    \(X_1^\et(\tilde X)\), \(X_1^\et(Z,x)\), and a free discrete group of rank
    \(n-1\) on generators \(x_2,\ldots,x_n\), quotient by the relations 
\[
\phi_j(a)=x_j\psi_j(a)x_j^{-1},\quad  a \in \pi_1^\pet(Z_j), 1 \leq j\leq n,
\]
where \(\phi_j:\pi_1^\pet(Z_j)\to \pi_1^\pet(Z)\) and
\(\psi_j:\pi_1^\pet(Z_j)\to \pi_1^\pet(\tilde X)\) are functorial
maps, and \(x_1=1\).
\end{thmIII}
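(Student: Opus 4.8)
The plan is to reduce the computation of $\pi_1^\pet(X)$ to a descent/gluing statement for geometric covers along the normalization map, and then to translate that gluing into the Seifert–van Kampen–type presentation in the statement. First I would recall that, because $X$ is Nagata Noetherian and J-2, the normalization $\tilde X\to X$ is finite, and it is an isomorphism away from the singular locus $Z$; dually, one has a pushout square of schemes
\[
\begin{tikzpicture}[xscale=3,yscale=-1, baseline=(current bounding box.center)]
\node (A) at (0,0) {$W$};
\node (B) at (1,0) {$Z$};
\node (C) at (0,1) {$\tilde X$};
\node (D) at (1,1) {$X$};
\draw[>=latex,->] (A) -- (B);
\draw[>=latex,->] (A) -- (C);
\draw[>=latex,->] (B) -- (D);
\draw[>=latex,->] (C) -- (D);
\end{tikzpicture}
\]
where $W=\tilde X\times_X Z=\coprod_j Z_j$ is the preimage of $Z$ in $\tilde X$ (with its reduced structure, or rather the relevant closed subscheme on which $\tilde X\to X$ fails to be an isomorphism). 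The key input is that geometric covers satisfy descent along the finite morphism $\tilde X\to X$ together with the nilpotent/closed-immersion data along $Z$: a geometric cover of $X$ is equivalent to a geometric cover $V\to\tilde X$ together with a geometric cover $V_0\to Z$ and an isomorphism $V\times_{\tilde X}W\cong V_0\times_Z W$ of covers of $W$, i.e. $\Cov(X)$ is the $2$-fiber product $\Cov(\tilde X)\times_{\Cov(W)}\Cov(Z)$. For finite étale covers this is classical (it is exactly how $\pi_1^\et$ of a nodal curve is computed); the point here is to check it for \emph{geometric} covers, using that geometric covers are étale maps satisfying the valuative criterion, that both conditions are fpqc-local, and that $\tilde X\amalg Z\to X$ is an fppf cover (or by an Artin-style gluing along the closed subscheme $Z$).

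Granting this equivalence of categories, the presentation of $\pi_1^\pet(X)$ follows from the general theory of fundamental groups attached to a Galois category / infinite-Galois category (in the sense of \cite[\S7]{BS15}): if $\mathcal C_X\simeq \mathcal C_{\tilde X}\times_{\mathcal C_W}\mathcal C_Z$ compatibly with fiber functors at a compatible system of base points, then $\pi_1(\mathcal C_X)$ is the corresponding pushout (amalgamated coproduct) of topological groups, $\pi_1^\pet(\tilde X)\amalg_{\pi_1^\pet(W)}\pi_1^\pet(Z)$. Here I would use $\pi_1^\pet(\tilde X)=\pi_1^\et(\tilde X)$ since $\tilde X$ is normal, and likewise observe that each $Z_j$ is connected with group $\pi_1^\pet(Z_j)$, so $\pi_1^\pet(W)=\pi_1^\pet(\coprod_j Z_j)$ is the free product $\ast_{j=1}^n \pi_1^\pet(Z_j)$ \emph{together with} the extra rank-$(n-1)$ free factor coming from the fact that $W$ has $n$ connected components while $X$ (hence the "combinatorial graph" $Z\leftarrow W\rightarrow \tilde X$) is connected — exactly the $\pi_1$ of the graph of groups with one vertex for $\tilde X$, one for $Z$, and edges indexed by the $Z_j$. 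Unwinding the pushout of groups along the two maps $\phi_j,\psi_j$ and choosing $x_1=1$ for the distinguished component then yields precisely the generators $x_2,\dots,x_n$ and the relations $\phi_j(a)=x_j\psi_j(a)x_j^{-1}$.

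The main obstacle I anticipate is the first step: establishing that $\Cov(X)$ is genuinely the $2$-fiber product $\Cov(\tilde X)\times_{\Cov(W)}\Cov(Z)$, \emph{and} that this equivalence is compatible with base points well enough to apply the Noohi-group / infinite-Galois-category machinery. Two subtleties must be handled with care. One is that geometric covers need not be finite or even quasi-compact, so the classical "gluing finite covers along a nilpotent thickening" arguments must be redone in the pro-étale setting; here one leans on the étale-local structure of geometric covers and on the valuative criterion being insensitive to the gluing. The other is connectedness bookkeeping: after gluing, a cover can become disconnected or can acquire monodromy permuting the sheets over the various $Z_j$, which is precisely what forces the free factor of rank $n-1$; getting the rank exactly right (rather than, say, off by the number of components of $Z$ or of $X$) requires tracking the graph-of-groups combinatorics explicitly, which is why the hypothesis that $Z$ is connected is imposed. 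Once the categorical equivalence and its compatibility with fiber functors are in place, the passage to the group presentation is formal, via van Kampen for infinite-Galois categories.
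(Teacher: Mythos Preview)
Your overall strategy matches the paper's: first establish the equivalence
\(\Cov(X)\simeq \Cov(\tilde X)\times_{\Cov(W)}\Cov(Z)\) with \(W=\tilde Z=\tilde X\times_X Z\), then translate this into the group presentation via a van~Kampen--type argument. The paper does exactly this (see Lemma~\ref{closed van kampen v2} and Proposition~\ref{van kampen machine}), and your graph-of-groups intuition for where the free factor of rank \(n-1\) comes from is correct and is precisely what the paper formalizes as the \({\bf VK}\) construction.

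However, your proposed justification of the first step has a genuine gap. The map \(\tilde X\amalg Z\to X\) is \emph{not} fppf in general: normalization of a non-normal scheme is almost never flat, so fpqc/fppf descent is unavailable. Moreover, even granting effective descent for some cover, the 2-fiber product \(\Cov(\tilde X)\times_{\Cov(W)}\Cov(Z)\) is \emph{not} a priori the category of descent data for \(\tilde X\amalg Z\to X\): descent data live on \((\tilde X\amalg Z)\times_X(\tilde X\amalg Z)\), which contains the non-trivial piece \(\tilde X\times_X\tilde X\), and one must argue separately that this piece contributes nothing new. The paper handles this by using \emph{proper} (not flat) descent for the finite surjection \(\tilde X\to X\) (via Rydh, as in \cite[Proposition~1.16]{Elena18}), together with the set-theoretic decomposition \(\tilde X\times_X\tilde X=\Delta(\tilde X)\cup(\tilde Z\times_Z\tilde Z)\); this is the content of Lemma~\ref{closed van kampen v2}. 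Your alternative suggestion of ``Artin-style gluing'' is also problematic as stated: \(X\) is typically \emph{not} the scheme-theoretic Ferrand pushout \(\tilde X\cup_{\tilde Z} Z\) (e.g.\ a cusp), so one cannot simply invoke a pushout of schemes; one really needs the argument at the level of \(\Cov(-)\).

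A smaller point: writing ``\(\pi_1^\pet(W)\)'' for the disconnected \(W=\coprod_j Z_j\) and then describing it as a free product with an extra free factor is not well-posed; \(\pi_1^\pet\) is only defined for connected schemes. What is true is that \(\Cov(W)=\prod_j\Cov(Z_j)\), and the 2-fiber product over this product of categories is exactly what produces the extra generators \(x_2,\dots,x_n\). The paper packages this computation once and for all in Proposition~\ref{van kampen machine}, which is the infinite-Galois-category version of the graph-of-groups statement you are reaching for.
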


We have a similar description when \(X\) has more irreducible
components (cf.~\ref{structure theorem when the singular locus is
    connected}).  If \(Z\) has more connected components, we also
  have an inductive procedure to compute \(\pi_1^\pet(X)\) (cf.~\ref{structure theorem when singular locus is
disconnected}).  As an example, E. Lavanda's formula (\cite[Theorem
1.17]{Elena18}) is a special case of the following:

\begin{thmIV}[cf.~\ref{curve case}]
If $X$ is a connected scheme of finite type over a separably closed
field $k$ and if the singular locus $Z$ is 0-dimensional,
then \(\pi_1^\pet(X)\) is the coproduct of \(\pi_1^\et(X_i)\), \(1\leq
i\leq n\), and a free discrete group of rank \(|f^{-1}(Z)|-|Z|-n+1\),
where \(X_1,\ldots,X_n\) are the irreducible components of \(X\),
\(f:\tilde X\to X\) is the normalization map, and $|\cdot|$ denote the number of points of the
underlying set.
\end{thmIV}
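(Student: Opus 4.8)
The plan is to deduce Theorem~IV from the structure theorems \ref{structure theorem when the singular locus is connected} and \ref{structure theorem when singular locus is disconnected} (that is, from Theorem~III and its extension to a reducible $X$ with possibly disconnected singular locus) by specializing to the case $\dim Z=0$, and then to compute the rank of the free group that comes out.

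First I would record the geometric simplifications forced by the hypotheses. A scheme of finite type over a field is Nagata, Noetherian and J-2, so the structure theorems apply; the normalization $f\colon\tilde X\to X$ is finite, hence $f^{-1}(Z)$ is again $0$-dimensional of finite type over $k$. Therefore each connected component of $Z$ and of $f^{-1}(Z)$ is the spectrum of an Artinian local ring with residue field finite over $k$, hence --- $k$ being separably closed --- purely inseparable over $k$; as $\pi_1^\pet$ is insensitive to nilpotent thickenings and to universal homeomorphisms, each such component has trivial pro-\'etale fundamental group. Consequently, in each use of the structure theorems, every group attached to a piece of the singular locus (the $\pi_1^\pet(Z)$, the $\pi_1^\pet(Z_j)$) and every transition map ($\phi_j$, $\psi_j$) is trivial, so each amalgamated coproduct $\coprod^{\pi_1^\pet(Z_j)}$ collapses to an ordinary coproduct and each relation $\phi_j(a)=x_j\psi_j(a)x_j^{-1}$ becomes vacuous. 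Writing $\tilde X=\bigsqcup_{i=1}^{n}\tilde X_i$ for the decomposition into connected components --- so $\tilde X_i$ dominates the irreducible component $X_i$, and $\pi_1^\et(\tilde X_i)=\pi_1^\pet(\tilde X_i)$ because $\tilde X_i$ is normal, this being the group written $\pi_1^\et(X_i)$ in the statement --- the structure theorems now say precisely that $\pi_1^\pet(X)$ is the coproduct of the groups $\pi_1^\et(\tilde X_i)$, $1\le i\le n$, with a free discrete group. It remains to pin down the rank of that free group.

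For the rank I would run the inductive procedure of \ref{structure theorem when singular locus is disconnected}, adjoining the points $z_1,\dots,z_r$ of $Z$ one at a time (the base case of a single point being \ref{structure theorem when the singular locus is connected}). If at the step treating $z_\ell$ the points of $f^{-1}(z_\ell)$ lie on $k_\ell$ distinct connected components of the scheme built so far, then this step contributes a free factor of rank $\#f^{-1}(z_\ell)-k_\ell$ --- for $k_\ell=1$ this is the factor $F_{n-1}$ of Theorem~III, now read in the $0$-dimensional situation --- and merges those $k_\ell$ components into one. I would package the sum $\sum_{\ell}\bigl(\#f^{-1}(z_\ell)-k_\ell\bigr)$ by means of the incidence graph $\Gamma$ with vertex set $\{X_1,\dots,X_n\}\sqcup Z$ and one edge joining $X_i$ to $z$ for each point of $f^{-1}(z)$ lying on $X_i$: adjoining $z_\ell$ amounts to adding the vertex $z_\ell$ and its $\#f^{-1}(z_\ell)$ incident edges, whereby the quantity $\#E-\#V+\#\pi_0$ of the graph assembled so far increases by $\#f^{-1}(z_\ell)-1-(k_\ell-1)=\#f^{-1}(z_\ell)-k_\ell$; starting from the $n$ isolated vertices $X_i$ (for which $\#E-\#V+\#\pi_0=0$) one arrives at $\#E(\Gamma)-\#V(\Gamma)+\#\pi_0(\Gamma)$. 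As $X$ is connected --- and, after disposing of the trivial cases $Z=\emptyset$ (where $X_{\mathrm{red}}$ is normal and the rank is $0$) and $n=1$, every $X_i$ meets $Z$, so an irreducible component disjoint from $Z$ would be a connected component of $X$ --- the graph $\Gamma$ is connected, and the total rank equals $b_1(\Gamma)=\#E(\Gamma)-\#V(\Gamma)+1=|f^{-1}(Z)|-|Z|-n+1$; this is in particular nonnegative.

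The one genuinely substantial point, and the one I expect to be the main obstacle, is the local identity used above: that applying the structure theorem to a single $0$-dimensional point $z_\ell$ produces a free factor of rank exactly $\#f^{-1}(z_\ell)-k_\ell$, uniformly over unibranch points (where $\#f^{-1}(z_\ell)=k_\ell=1$ and the contribution vanishes), nodal points, and higher multiple points. Extracting this free rank cleanly from the iterated amalgamation formula --- keeping careful track of which components get merged --- is the crux; once it is in hand, the passage to the closed formula is the Euler-characteristic count carried out above, and the remaining verifications (triviality of $\pi_1^\pet$ on the $0$-dimensional pieces, connectedness of $\Gamma$, the degenerate cases) are routine.
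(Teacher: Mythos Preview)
Your approach is sound and the paper in fact explicitly acknowledges it: ``It is a nice exercise to prove the theorem by following the route provided by \ref{structure theorem when singular locus is disconnected} and proceed by induction on the number $m$.'' However, the paper does \emph{not} carry out that induction; it takes a shorter direct route.

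Instead of iterating the d\'evissage, the paper goes back to the descent equivalence \(\Cov(X)\simeq \Cov(\tilde X)\times_{\Cov(\tilde Z)}\Cov(Z)\) (Lemma~\ref{closed van kampen v2}, cf.\ \ref{structure theorem when the singular locus is connected-covers}) and unwinds it by hand under the hypothesis that every connected component of $Z$ and $\tilde Z$ has trivial $\pi_1^\pet$. An object of this $2$-fibre product is then a tuple \((\{A_i\}_{i=1}^n,\{B_j\}_{j=1}^m,\{\lambda_k\}_{k=1}^{\tilde m})\) where $A_i$ is a $\pi_1^\et(\tilde X_i)$-set, $B_j$ is a bare set, and $\lambda_k\colon A_{i(k)}\xrightarrow{\sim}B_{j(k)}$ is a bijection for each point $\tilde Z_k$ of $\tilde Z$ lying on $\tilde X_{i(k)}$ over $Z_{j(k)}$. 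Because $X$ is connected one can choose $n+m-1$ of the $\lambda_k$ (a spanning tree of your incidence graph~$\Gamma$) to identify all the $A_i$, $B_j$ with a single set $A_1$; the remaining $\tilde m-(n+m-1)$ bijections then act as arbitrary permutations of $A_1$. This description of the Galois category immediately yields the Noohi coproduct of the $\pi_1^\et(\tilde X_i)$ with a free discrete group of rank $\tilde m-m-n+1$.

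What the two approaches buy: the paper's argument sidesteps exactly the ``genuinely substantial point'' you flag. In your induction the pieces $T_1$, $T_1'$ are \emph{open} subschemes, so the normalized components occurring at intermediate stages are opens in $\tilde X_i$ rather than $\tilde X_i$ itself, and the $D_t$ in \ref{structure theorem when singular locus is disconnected} have nontrivial $\pi_1^\et$; you would have to check that these contributions cancel against the amalgamation. Your Euler-characteristic bookkeeping via $\Gamma$ is the right way to organize that, and indeed the paper's ``choose $n+m-1$ of the $\lambda_k$'' is nothing but choosing a spanning tree of the very same bipartite graph. So the combinatorics coincide; the paper simply reaches it in one step rather than by induction.
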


As another application of Theorem \ref{structure theorem in
Introduction} we prove that a continuous representation of
$\pi_1^\pet(X,x)$ factors through a discrete quotient iff its
restrictions to the étale fundamental groups of the
normalizations of the irreducible components of $X$  do so (cf.~\ref{when the proetale representation
    factors through a discrete one}).

\section*{\centering Notations and Conventions}
\label{s:notation}

\begin{enumerate}
\item For a topological group \(\pi\) and a field $K$,
    $\Rep_K^\cts(\pi)$ denotes the category of finite dimensional continuous
    $K$-representations viewing $K$ as a discrete field (even if $K$ carries
    a natural non-discrete topology).

\item For a topological group \(\pi\) and a field $K$, we denote by
    \(\pi_{K}\coloneq \pi^\cts\) the \emph{algebraic hull} of \(\pi\) over \(K\)
(cf.~\cite[Definition 2.2]{Elena18}). This is the Tannaka dual of the
Tannakian category $\Rep_K^\cts(\pi)$ equipped with the forgetful fiber
functor. 
\item If $\pi$ is  a profinite group, say $\pi=\varprojlim_{i\in I}\pi_i$ where each
$\pi_i$ is a finite discrete group and the limit is taken in the category of topological
groups,
then $\pi_K=\varprojlim_{i\in I}(\pi_i)_K$, where the projective limit is
taken in the category of affine $K$-group schemes. Since 
the association $\pi\mapsto \pi_K$ induces an equivalence between profinite groups and
profinite constant $K$-group schemes, we will identify
$\pi$ and $\pi_K$ without saying.
\item Let $X$ be a scheme over a field $K$ of positive
    characteristic. We denote by $F_{X/K}\colon X\to X^{(1)}$ the
    \emph{relative Frobenius map}. For each $i\in \N$, we can define,
    inductively, the relative Frobenius $F_{X^{(i)}/K}\colon X^{(i)}\to
    X^{(i+1)}$.   An \emph{$F$-divided sheaf} is a sequence $(E_i, \sigma_i
)_{i\in\N}
$ where $E_i$ is a finitely presented $\sO_{X^{(i)}}$-module and
$\sigma_i\colon F_{X^{(i)}/K}^*E_{i+1}\to E_{i}$ is $\sO_{X^{(i)}}$-linear
isomorphism. We denote by $\Fdiv(X)$ the category of 
$F$-divided sheaves on $X/K$. It's worth to note that if $X$ is of finite type
over $K$, then $E_i$ is actually a vector bundle on $X^{(i)}$ 
(see \cite[\S 2.2.1]{dS2007}, \cite[Theorem I, (3)]{TZ1} for more details).
\end{enumerate}

\section{Meromorphic descent}
\subsection{Noetherian meromorphic descent} 

\begin{conv} Let $R$ be an adic Noetherian ring, and let $I\subseteq R$
be an
ideal of definition. Suppose that $I=(f)$. Let $K$ be a
localization of $R$ at the element $f$. Let $\hat{X}$ be an
\textit{adic
formal scheme} (\cite[10.4.2, p.~407]{EGA1new}) equipped with an \textit{adic morphism}
$\hat{X}\to\Spf(R)$ (\cite[10.12.1, p.~436]{EGA1new}). In this
case, we simply call $\hat{X}$ a formal
scheme over $\Spf(R)$ or over $R$. We denote by $\Coh(\hat{X})$ the category of sheaves of
    $\sO_{\hat{X}}$-modules on $\hat{X}$ whose pullbacks to the
    scheme
    $X_n\coloneq (\hat{X},\sO_{\hat X}/I^{n+1})$ are
    finitely presented for each $n\in\N$. If $\hat{X}$ is a locally Noetherian
    formal scheme (\cite[10.4.2, p.~407]{EGA1new}), then
    $\Coh(\hat{X})$ is just the category of coherent sheaves on
    $\hat{X}$
(\cite[\href{https://stacks.math.columbia.edu/tag/01XZ}{01XZ}]{stacks-project}).
\end{conv}

\begin{defn}
    We denote by $\Coh^m(\hat{X})$ the category  whose objects are
    exactly the same as those in $\Coh(\hat{X})$, whose set of morphisms between
    two objects $\sF,\sG$ is 
    \[
        \Gamma(\hat{X},\underline{\Hom}_{\sO_{\hat{X}}}(\sF,\sG)\otimes_RK)
    \]
    where $\underline{\Hom}$ denotes the internal sheaf Hom.
    \end{defn}

\begin{defn} Let $\sC$ be an $R$-linear category. We define the
    category $\sC\otimes_RK$ to the category whose objects are
    exactly the same as those in $\sC$, whose set of morphisms between
    two objects $X,Y$ is 
    $\Hom_\sC(X,Y)\otimes_RK$. If $\sC$ is an Abelian category,
    then so is $\sC\otimes_RK$ (\cite[Lemma 1.5, p.
    7]{DTZ18}).
\end{defn}

\begin{defn} \label{def of meromorphic descent for quasi-compact
    stuff}   
  Let $q\colon\hat{Y}\to \hat{X}$ be an adic map of locally  Noetherian formal
  schemes over $\Spf(R)$. We define the category
  $\Coh^m(\hat{Y}/\hat{X})$ of coherent sheaves with \textit{meromorphic
  descent data} (MDD for short) to be the category whose objects are pairs
  $(\sF,\phi)$, where $\sF$ belongs to $\Coh^m(\hat{Y})$ and $\phi\colon p_1^*\sF\arr p_2^*\sF$
  is a map  in
  $\Coh^m(\hat{Y}\times_{\hat{X}}\hat{Y})$  which
  satisfies 
  \begin{itemize}
      \item the identity condition: the pullback of $\phi$ along
          the diagonal map 
          $\hat{Y}\to \hat{Y}\times_{\hat{X}}\hat{Y}$ is the
          identity map in $\Coh^m(\hat{Y})$;
      \item the cocycle condition: we have an equality
          \[p_{23}^*\phi\circ p_{12}^*\phi=p_{13}^*\phi\] in
  $\Coh^m(\hat{Y}\times_{\hat{X}}\hat{Y}\times_{\hat{X}}\hat{Y})$,  \end{itemize}
  whose morphisms between two objects $(\sF,\phi)$ and
  $(\sG,\varphi)$ are those morphisms in
  \[\Hom_{\Coh^m(\hat{Y})}(\sF,\sG)\] which are
  compactifiable with $\phi$ and $\varphi$.
\end{defn}

\begin{thm} \label{meromorphic descent for quasi-compact stuff}
      Let $q\colon\hat{Y}\to \hat{X}$ be an adic map of Noetherian formal
  schemes over $\Spf(R)$. Then there is a canonical functor
  \[\Coh(\hat{X})\otimes_RK\arr\Coh^m(\hat{Y}/\hat{X})\] which
  is an equivalence if $q$ is faithfully flat.
\end{thm}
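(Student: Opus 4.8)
The plan is to reduce the statement to classical faithfully flat descent for coherent sheaves on the schemes $X_n = (\hat X, \mathcal{O}_{\hat X}/I^{n+1})$, carried out uniformly in $n$, and then to pass to the category-theoretic limit while inverting $f$. First I would construct the canonical functor: given $\mathcal{F} \in \Coh(\hat X)$, the sheaf $q^*\mathcal{F}$ on $\hat Y$ carries the tautological descent isomorphism $p_1^* q^* \mathcal{F} \simeq p_2^* q^* \mathcal{F}$ coming from $q \circ p_1 = q \circ p_2$, which in particular is an honest (not merely meromorphic) descent datum; this satisfies the identity and cocycle conditions on the nose. Since a morphism $\mathcal{F} \to \mathcal{G}$ in $\Coh(\hat X)\otimes_R K$ is an element of $\Hom(\mathcal{F},\mathcal{G})\otimes_R K$, its pullback is compactifiable with the descent data, so the functor is well-defined on morphisms and on the $K$-linearization.

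For full faithfulness, the key point is that for $\mathcal{F}, \mathcal{G} \in \Coh(\hat X)$ one has an exact sequence computing $\underline{\Hom}_{\hat X}(\mathcal{F},\mathcal{G})$ as the equalizer of $q_*\underline{\Hom}_{\hat Y}(q^*\mathcal{F}, q^*\mathcal{G}) \rightrightarrows (p_i)_*\underline{\Hom}_{\hat Y \times_{\hat X} \hat Y}(\cdots)$ — this is faithfully flat descent for the coherent sheaf $\underline{\Hom}(\mathcal{F},\mathcal{G})$, which holds level-by-level on each $X_n$ and hence on the formal scheme since $\Coh(\hat X) = \varprojlim_n \Coh(X_n)$ (using that $\hat X$, hence $\hat Y$ and the fiber products, are Noetherian so coherence is preserved and limits behave well). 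Because $\otimes_R K$ is exact on finitely generated $R$-modules ($K$ being a localization of the Noetherian ring $R$, hence flat), applying $\Gamma(\hat X, -)\otimes_R K$ commutes with this equalizer; a morphism in $\Coh^m(\hat Y/\hat X)$ that is compatible with the meromorphic descent data is exactly an element of the equalizer after inverting $f$, giving full faithfulness.

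For essential surjectivity, take $(\mathcal{F}, \phi) \in \Coh^m(\hat Y/\hat X)$. The issue is that $\phi$ is only a map in $\Coh^m$, i.e.\ an honest morphism $p_1^*\mathcal{F} \to p_2^*\mathcal{F} \otimes_R K$ (equivalently an isomorphism after inverting $f$), not an isomorphism of coherent sheaves. The strategy is: after possibly clearing denominators, replace $\mathcal{F}$ by a coherent subsheaf on which $\phi$ and $\phi^{-1}$ both become integral — concretely, the sheaves $f^N \cdot (\text{image lattices})$ — so that $\phi$ restricts to a genuine descent isomorphism on a modified coherent sheaf $\mathcal{F}'$ with $\mathcal{F}' \otimes_R K \simeq \mathcal{F} \otimes_R K$; here the cocycle condition, which holds in $\Coh^m$, guarantees these lattices are compatible on the triple product. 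Then ordinary faithfully flat descent (again applied to each $X_n$ and assembled over the formal scheme, using the Grothendieck existence/limit formalism for $\Coh(\hat X)$) produces a coherent sheaf on $\hat X$ descending $\mathcal{F}'$, and this object maps to $(\mathcal{F},\phi)$ in $\Coh^m(\hat Y/\hat X)$.

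\textbf{Main obstacle.} The genuinely delicate step is essential surjectivity: arranging that the meromorphic descent datum $\phi$ can be made integral on a single coherent model $\mathcal{F}'$ \emph{compatibly with the cocycle condition on $\hat Y \times_{\hat X} \hat Y \times_{\hat X} \hat Y$}, since a priori the denominators needed on the double product and on the triple product could be incompatible. I expect this to require a careful bookkeeping argument with the $f$-adic filtration — showing that the cocycle identity in $\Coh^m$ forces the relevant lattices $\phi(p_1^*\mathcal{F}) \cap p_2^*\mathcal{F}$ to glue — together with the Noetherian hypothesis to bound the denominators uniformly. Everything else (constructing the functor, full faithfulness, and the descent along $q$ at each finite level $X_n$) is standard faithfully flat descent combined with the exactness of $-\otimes_R K$.
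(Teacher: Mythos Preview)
Your outline is sound and, in fact, is essentially the shape of Gabber's argument; note that the paper does not give a self-contained proof here but simply attributes the result to Gabber and refers to \cite[(1.9), p.~774]{Ogus84}. Your construction of the canonical functor and your full faithfulness argument are correct: since $\hat X$, $\hat Y$, and their fiber products are Noetherian, global sections commute with the filtered colimit defining $-\otimes_R K$, so the equalizer computing descended morphisms really is $\Hom_{\Coh(\hat X)}(\mathcal F,\mathcal G)\otimes_R K$ as you claim.

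Where your proposal is not yet a proof is exactly where you flag it. The phrase ``clearing denominators'' and the lattice $\phi(p_1^*\mathcal F)\cap p_2^*\mathcal F$ point in the right direction, but naively multiplying $\phi$ by $f^N$ destroys both invertibility and the cocycle condition, and there is no evident single step that produces a coherent sub\-sheaf $\mathcal F'\subset \mathcal F\otimes_R K$ with $\phi(p_1^*\mathcal F')=p_2^*\mathcal F'$ exactly. Gabber's actual argument proceeds by Noetherian induction on the base, constructing the genuine model stratum by stratum and gluing; the paper reproduces a version of this induction in Lemma~\ref{meromorphic descent data are genuine} for the $G$-torsor case, and you could profitably read that proof as a template for what is needed here. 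So: the strategy is right, the identification of the obstacle is accurate, but the ``bookkeeping argument'' you anticipate is the entire content of the theorem and is more structural (an induction on closed subsets) than a uniform denominator bound.
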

\begin{proof}  The functor is simply the formation of the {\it
    canonical descent datum}
    (\cite[\href{https://stacks.math.columbia.edu/tag/023D}{023D}]{stacks-project}).
    A proof of  this theorem,
    which is due to O. Gabber, can
    be found in \cite[(1.9), p.~774]{Ogus84} (the result was stated
    there when \(R\) is a complete DVR and \(K={\rm Frac}(R)\), but the
    proof works in our setting).
\end{proof}

\begin{ex}\label{meromorphic descent for non-Noetherian 
    stuff fails1} Theorem \ref{meromorphic descent for
quasi-compact stuff} would not be true if one replaces ``Noetherian'' by
    ``locally Noetherian''. 
 Let  $R$ be a complete \textup{DVR} with
 maximal ideal $I=(\pi)$ and quotient field $K$. Let $T$ (e.g.
 $R[x,y]/(xy)$) be a
    scheme over $R$ equipped with two disjoint opens $U_x,U_y$
    (e.g. $U_x=(x\neq 0)$, $U_y=(y\neq 0)$)   whose special
    fibers are not empty and $U_x\simeq U_y$ as $R$-schemes. Let
    $\{T_n|n\in\Z\}$ be $\Z$-copies of $T$. We glue $U_{y_n}\subseteq T_n$ with
    $U_{x_{n+1}}\subseteq T_{n+1}$ for all $n\in\Z$ in the sense of
    \cite[Chapter II, Exercise 2.12, p.~80]{Hartshorne77}. Then we get
    a scheme $X$ with a Zariski-covering \{$T_i\}_{i\in\Z}$
    such that
    \[
        T_i\bigcap T_j=
        \begin{cases}
            \emptyset, &\text{ if } i-j\neq\pm1;\\
            T_i=T_j, &\text{ if } i=j;\\
            U_{y_i}=U_{x_{j}},  &\text{ if } j=i+1;
        \end{cases}
    \]
    Set $Y=\coprod_{n\in\Z}T_n$, and let $q\colon
    \hat{Y}\to\hat{X}$ be the formal completion of the map $Y\to
    X$ along the special fiber. Now if we take $\sO_{\hat{Y}}$
    on ${\hat{Y}}$ 
    and glue the restrictions on $\hat{U}_{x_i}$ by the uniformizer $\pi$.
    The so obtained meromorphic descent data can not
    descent to $\hat{X}$.

This shows that meromorphic descent may not be effective even for a
Zariski covering with by infinitely many Zariski open sets.   So the next result is
somewhat surprising. 
\end{ex}

\subsection{Meromorphic descent along $G$-torsors}

\begin{conv}\label{convention for the descent along G-torsors} Let $R$ is a complete \textup{DVR}, and let $k$ be the
    residue field of $R$. Let $K$ be the quotient field. Let
    $\pi$ be a uniformizer
    of $R$, $S\coloneq \Spec(R)$, $\hat{S}\coloneq \Spf(R)$. Let
    $\hat X\to \hat{S}$ be an adic morphism of Noetherian formal
    schemes. We denote $X_n$ the scheme given by the ringed
space $(\hat X,\sO_{\hat X}/(\pi)^{n+1})$. Let
$q\colon \sYrho\to\hat{X}$ be a torsor under a discrete group
$G$, i.e. $q$ is a faithfully flat adic map of formal schemes
over $\hat{S}$ and there is a group homomorphism $\rho\colon G\to
\Aut_{\hat X}(\sYrho)$ such that the induced map
\[\sYrho\times G\arr \sYrho\times_{\hat X}\sYrho\]
\[(y,w)\longmapsto (y,\rho(w)(y))\] is an
isomorphism, where $\sYrho\times G\coloneq\coprod_{w\in
G}\sYrho$. Note that  $\sYrho$ is a locally
Noetherian formal scheme, but not Noetherian when \(G\) is infinite. We denote by $Y_n$ the scheme given by the ringed
space $(\sYrho,\sO_{\sYrho}/(\pi)^{n+1})$.\end{conv}

By using the isomorphism \(\sYrho \times _{\hatX}\sYrho\simeq \sYrho
\times G\), it is evident that \(\Coh^m(\sYrho/\hatX)\) is equivalent
to the category \(\Coh^m(\sYrho/\hatX,G)\), which we define now:

\begin{defn}\label{tensor theoretic MDD}  The category \(\Coh^m(\sYrho/\hatX,G)\) of {\it coherent
    sheaves with MDD} is the category whose objects are pair
  \((\sF,\{h_w\}_{w \in G})\) where \(\sF\) is an object in
  \(\Coh^m(\sYrho)\) and \(\{h_w\}_{w \in W}\) is a
  collection of elements such that
    \[h_w \in\Hom_{\Coh^m(\sYrho)}(\sF,w^*\sF)=
    \Gamma(\sYrho,\Homsh_{\sO_{\sYrho}}(\sF,w^*\sF)\otimes_RK)\]
satisfying the ``identity condition'' and the ``cocycle
condition'':
\begin{itemize}
      \item the identity condition: $h_e=\id$ for the unit $e\in G$;
      \item the cocycle condition: we have an equality
  \[w^*(h_{w'})\circ h_w=h_{w'w}\] for all $w,w'\in G$.  \end{itemize}
A map \[(\sF,\{h_w\}_{w\in G})\arr
(\sF',\{h_w'\}_{w\in G})\] between two coherent sheaves with MDD is an element in
\[\Hom_{\Coh^m(\sYrho)}(\sF, \sF')=\Gamma(\sYrho,\Homsh_{\sO_{\sYrho}}(\sF,\sF')\otimes_RK)\]
which is compatible with $\{h_w\}_{w\in G}$ and $\{h_w'\}_{w\in
G}$ in the obvious sense.\end{defn}

\begin{defn}  Let $(\sF,\{h_w\}_{w \in G}$ be a
    coherent sheaf with  MDD on $\sYrho$. We say
    that it is \textit{genuine} if for all \(w \in G\), the element
    \(h_w\) lies in the subset \(\Hom_{\Coh(\sYrho)}(\sF, w^*\sF)\)
    of   \(\Hom_{\Coh^m(\sYrho)}(\sF, w^*\sF)\). \end{defn}

%

\begin{lem}\label{meromorphic descent data are genuine} Any object in
  \(\Coh^m(\sYrho/\hatX,G)\) is isomorphic to a genuine one.
\end{lem}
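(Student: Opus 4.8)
The plan is to reduce the problem to a finiteness statement: although each $h_w \in \Hom_{\Coh^m(\hat Y)}(\sF, w^*\sF)$ only has \emph{meromorphic} entries, the collection $\{h_w\}_{w\in G}$ is controlled by the cocycle condition, so I expect that after a single "denominator-clearing" isomorphism $\sF \rightsquigarrow \sF'$ in $\Coh^m(\hat Y)$ — i.e.\ after multiplying $\sF$ by a suitable power of $\pi$ on suitable pieces — all the $h_w$ become genuine simultaneously. The key input is that $q\colon \hat Y\to \hat X$ is a $G$-torsor with $\hat X$ \emph{Noetherian}: the "bad locus" of $\sF$, where $h_w$ fails to be genuine, lives on $\hat Y$ but is $G$-equivariant in a controlled way and descends to a closed subscheme of the Noetherian $\hat X$.

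First I would make the local picture explicit. Since $q$ is a $G$-torsor and $\hat X$ is Noetherian (hence quasi-compact), choose a finite cover of $\hat X$ by formal affines over which the torsor is trivial; then $\hat Y$ restricted to each is $\coprod_{w\in G}\hat U$ for a fixed Noetherian formal affine $\hat U = \Spf A$, and $w\colon \hat Y\to\hat Y$ permutes the copies. An object $(\sF,\{h_w\})$ is then, on each chart, a coherent $A$-module $M$ together with isomorphisms $h_w\colon M\otimes_R K \to M\otimes_R K$ (after identifying $w^*$ with the shift), and the cocycle condition reads $h_{w'w} = h_{w'}\circ h_w$. Thus, fixing a generating set, each $h_w$ is represented by a matrix over $A\otimes_R K = A[1/\pi]$, and I want to conjugate $M$ by a single $g \in \mathrm{GL}(M\otimes_R K)$ so that all $g\, h_w\, g^{-1}$ become automorphisms of an actual coherent subsheaf.

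Second, the descent of the bad locus. Consider inside $\sF\otimes_R K$ the $\sO_{\hat Y}$-submodule $\sF' := \sum_{w\in G} h_w^{-1}(\sF)$ — a priori an infinite sum, so one must argue it is still coherent. Here I use that $h_w^{-1}(\sF)/(\sF\cap h_w^{-1}(\sF))$ is a coherent torsion (killed by a power of $\pi$) sheaf supported on a closed subscheme $Z_w$ of $\hat Y$, and that the cocycle condition forces $Z_{w'w}\subseteq Z_{w'}\cup w'(Z_w)$, so the union of all the $Z_w$ is the preimage under $q$ of a closed subset $Z\subseteq \hat X$; since $\hat X$ is Noetherian, $Z$ and the "amount of denominator" are bounded, and in fact a single integer $N$ works: $\pi^N \sF' \subseteq \sF$. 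Hence $\sF'$ is squeezed between $\pi^N\sF$ and $\pi^{-N}\sF$ (replacing $\sF$ by $\pi^{-N}\sF'$ if needed, $\sF'$ is coherent), and by construction $h_w(\sF') = \sF'$ for every $w$ because of the cocycle identity: $h_w$ carries $\sum_{w'} h_{w'}^{-1}(\sF)$ isomorphically onto $\sum_{w'} h_{w'}^{-1}(w^*\sF)$, and reindexing $w' \mapsto w'w^{-1}$ matches this with $w^*\sF'$. Therefore $(\sF', \{h_w|_{\sF'}\})$ is genuine and the identity map $\sF\otimes_R K = \sF'\otimes_R K$ is an isomorphism in $\Coh^m(\hat Y/\hat X, G)$ from $(\sF,\{h_w\})$ to the genuine object $(\sF',\{h_w\})$.

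The main obstacle is the passage from "each $h_w$ individually needs only finitely much denominator" to "one uniform $N$ works for all $w\in G$ at once", since $G$ may be infinite: I have to leverage the cocycle condition together with Noetherianity of $\hat X$ (not merely of each $Y_n$) to bound $Z = q(\bigcup_w Z_w)$ and the exponent $N$. Concretely, one shows $Z_w$ is the preimage of a subscheme of $\hat X$ and uses $Z_{w w'} \subseteq Z_w \cup w(Z_{w'})$ to see that the ascending union $\bigcup_{w} q(Z_w)$ stabilizes after finitely many steps in the Noetherian topological space underlying $\hat X$; the bound on $N$ then follows since the relevant $\mathrm{Tor}$/length data live on the now-fixed Noetherian closed subscheme. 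Checking compatibility of the gluing of the local $\sF'$ across the finite affine cover of $\hat X$ is routine once the construction of $\sF'$ is seen to be canonical (it is, being defined intrinsically as $\sum_w h_w^{-1}(\sF)$ inside $\sF\otimes_R K$).
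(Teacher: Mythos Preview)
Your proposal has genuine gaps in both parts. First, the local picture is wrong: a $G$-torsor with $G$ infinite discrete is \emph{never} trivial over a non-empty Zariski (or finite \'etale) open of a connected base, so $\hat Y|_{\hat U}$ is not $\coprod_{w\in G}\hat U$. More importantly, your construction $\sF' := \sum_{w \in G} h_w^{-1}(w^*\sF)$ need not be coherent, and the argument that a single $N$ bounds all denominators fails. Take the trivial $\Z$-torsor $\hat Y = \coprod_{n\in\Z}\hat X$, set $\sF = \sO_{\hat Y}$, and let $h_n$ be multiplication by $\pi^n$; the cocycle condition $n^*(h_m)\circ h_n=h_{m+n}$ is just $\pi^m\cdot\pi^n=\pi^{m+n}$. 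Then $h_n^{-1}(n^*\sF)=\pi^{-n}\sO_{\hat Y}$ and your $\sF'=\sO_{\hat Y}[1/\pi]$ is not coherent. Here every $Z_n$ equals $Y_0$, so $\bigcup_n q(Z_n)=X_0$ stabilizes immediately, yet the exponent $|n|$ is unbounded: stabilization of supports in the Noetherian base says nothing about torsion exponents. (This MDD \emph{is} isomorphic to the genuine one $(\sO_{\hat Y},\{\id\})$, via the map $\phi$ given on the $n$-th copy by $\pi^{-n}$; but $\phi$ lies in $\Gamma(\hat Y,\sO_{\hat Y}\otimes_R K)$ and not in $\Gamma(\hat Y,\sO_{\hat Y})\otimes_R K$, precisely because $\hat Y$ is not quasi-compact.)

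The paper's argument is quite different and supplies the missing idea. It proceeds by Noetherian induction on $\hat X$: given an open $U\subsetneq\hat X$ over which a genuine model has already been built, pick a generic point $\eta$ of $\hat X\setminus U$. Since $\pi_1^\pet(\Spec\kappa(\eta))=\pi_1^\et(\Spec\kappa(\eta))$ is profinite, the restricted $G$-torsor is induced from an $H$-torsor for some \emph{finite} subgroup $H\subset G$; spreading out yields a quasi-compact $H$-stable open $W\subset\hat Y$, on which Gabber's theorem (the Noetherian case) applies to produce a genuine model. A Gieseker--Raynaud gluing then patches this with the sheaf on $q^{-1}(U)$ and its $G$-translates to extend to $q^{-1}(U')$ with $U'\supsetneq U$. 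The essential point you are missing is this reduction, over each point of $\hat X$, to a finite subgroup of $G$.
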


\begin{proof}

Suppose that $(\sF,\{h_w\}_{w \in G})$ is an object in \(\Coh^m(\sYrho/\hatX,G)\). Let $\sG$ be the
image of $\sF$ in $\sF\otimes_RK$. Then we have
$\sF\otimes_RK\simeq \sG\otimes_RK$. Thus replacing $\sF$ by
$\sG$ we may assume that $\sF$ is $R$-torsion free. Now we claim that
there is a coherent subsheaf $\sF'\subseteq \sF\otimes_RK$ such that
$\sF'\otimes_RK=\sF\otimes_RK$ and  $h_w$ sends $\sF'\subseteq
\sF\otimes_RK$ to
$w^*\sF'\subseteq w^*\sF'\otimes_RK=w^*\sF\otimes_RK$.  Evidently, the
lemma follows from this claim.

We will find  $\sF'$ using Noetherian induction on \(\hatX\).  The key is the
induction step: if the claim is true when \(\sYrho/\hatX\) is replaced
by \(q^{-1}(U)/U\) for some open \(U\) of \(\hatX\), and \(U \neq \hatX\),
then there exists an open \(U'\) such that \(U \subsetneq U' \subset
\hatX\) and the claim is true when \(\sYrho/\hat X\) is replaced by \(q^{-1}(U')/U'\).

\def\injto{\hookrightarrow}
Let \(Z=\hat X\setminus U\).  We regard \(Z\) as a closed subscheme of
\(X_0\) with the reduced induced structure.  Let \(\eta\in Z\) be a generic
point of \(Z\).  Since
\[\pi_1^\pet(\Spec(\kappa(\eta)))=\pi_1^\et(\Spec(\kappa(\eta)))\] is profinite, the \(G\)-torsor \(\sYrho|_{\Spec(\kappa(\eta))}\) obtained by pulling
back \(\sYrho\) through 
\[
    \Spec(\kappa(\eta))\injto Z\injto X_0 \injto \hatX
\]
is induced from an \(H\)-torsor  over
\(\Spec(\kappa(\eta))\) with \(H\subseteq G\) finite. Spreading it out we
find $V\subseteq Z$, an open neighborhood of $x$, and an $H$-torsor $\tilde{Z}\to
V$ which induces $\hat{Y}|_{V}$.  Thus \(\tilde Z\) is an open in
\(\sYrho|_V=q^{-1}(V)\) and \(\sYrho|_V\) is the
disjoint union of \(w\tilde Z\) over \(w \in G/H\).

We can find an \(H\)-invariant  quasi-compact open subset \(W\) of \(Y_0\)
such that \(W \cap q^{-1}(Z)=\tilde Z\).  Consider \(W\) as an open
formal subscheme of \(\sYrho\).  We can convert the 
MDD on \(\sF\) in the sense of \ref{tensor theoretic MDD} to an MDD \(\phi\) in the sense of
\ref{def of meromorphic descent for quasi-compact stuff}, and restrict it to get an MDD for
\(W \to q(W)\).  By Gabber's theorem \ref{meromorphic descent for quasi-compact
stuff}, there is a coherent sheaf
\(\sE\) on
\(q(W)\) such that \((q|_W)^*\sE\) is isomorphic to \((\sF|_W,\phi)\) in
\(\Coh^m(W/q(W))\).  We may and do assume that the isomorphism is given by
a morphism in \(\Hom_{\Coh(W)}((q|_W)^*\sE, \sF|_W)\).  Let \(\sE '\) be the
image of this morphism.

Let \(\sF '\) be the coherent sheaf on \(q^{-1}(U)\) asserted by the induction
hypothesis.  We now use \(\sF '\) on \(q^{-1}(U)\) and \(\sE '\) on \(W\) to
form an sheaf \(\sF ''\) on \(q^{-1}(U')\) extending \(\sF '\), where \(U':=U\cup
q(W)=U\cup q(\tilde Z)\), using a
method of Gieseker and Raynaud, as follows.  We first try to extend it
to \(q^{-1}(U) \cup W=q^{-1}(U)\cup\tilde Z\).  On the open set
\(q^{-1}(U)\cap W\), the restriction of \(\sF, \sF ', \sE '\) become the same upon
tensoring with \(K\).  Therefore, for \(n\) large enough we have
\[
\pi^n\sE ' \subset \sF ' \subset \pi^{-n} \sE ' \quad \mbox{ on }q^{-1}(U)\cap W.
\]
This implies
\[
  \sF '/\pi^n\sE ' \subset \pi^{-n} \sE '/\pi^n \sE' \quad \mbox{ on
  }q^{-1}(U)\cap W.
\]
On the other hand, the right-hand side is obviously the restriction of
a coherent sheaf on \(W_{2n}\) (by which we mean \(W\) regarded as an
open subscheme of \(Y_{2n}\)).  By
\cite[\href{https://stacks.math.columbia.edu/tag/01PD}{01PD}]{stacks-project}, the
left-hand side extends to a coherent sheaf \(\sG\) on \(W\) such that \(\sG\) is a
subsheaf of \(\pi^{-n} \sE '/\pi^n \sE '\).  Let \(\sE ''\) be
the subsheaf of \(\pi^{-n}\sE '\) such that \(\sE ''/\pi^n\sE
'=\sG\).  By construction, \(\sE''=\sF '\) on \(q^{-1}(U)\cap W\).
Since both $\sE',\sF'$ are $H$-invariant and the extension in
\cite[\href{https://stacks.math.columbia.edu/tag/01PD}{01PD}]{stacks-project}
is functorial, $\sE''$ is also $H$-invariant.

We now define a sheaf \(\sF ''\) on
\[
    q^{-1}(U')=q^{-1}(U)\cup\bigcup_{w\in G} w^{-1}W
\]
as follows: it is
the subsheaf of \(\sF \otimes_R K\) generated by sections of \(\sF '\)
and sections of \(h_w^{-1}(w^*\sE '')\) (a sheaf on \(w^{-1}W\)) for all \(w \in
G\). Thus a local section of $\sF\otimes_RK$ is in $\sF''$ iff its stalk at
each point is a sum of stalks of some local  sections of $\sF'$ and $h_w^{-1}(w^*\sE '')$.  We claim that
\(\sF ''\) fulfills all the conditions requied by the induction step.

From the above definition and the induction hypothesis, it is easy to see that \(\sF
''_y=\sF '_y\) for all \(y \in q^{-1}(U)\).  For \(y
\in q^{-1}(U')\setminus q^{-1}(U)\), \(\sF ''_y=\sum h_w^{-1}(\sE
''_z)\), where the sum is over \(\{(w,z) : w \in G, z \in \tilde Z,
w^{-1}z=y\}\), a transitive \(H\)-set.  It follows that for any \(w
\in G\), 
\[
    \sF''|_{w^{-1}W}=\sum_{j\in
    H}h^{-1}_{jw}((jw)^*\sE'')=h_w^{-1}(w^*\sE'')
\]
where the second equality follows from the fact that both $W$ and $\sE''$ are
$H$-invariant. We conclude that \(\sF ''\) is
coherent on \(q^{-1}(U')\).

It remains to verify that \(h_u\) maps \(\sF''\) to \(u^*\sF''\) for each
$u\in G$.
It suffices to check this over \(q^{-1}(U)\) and \(w^{-1}W\) for all
\(w \in G\).  The case of \(q^{-1}(U)\) is clear.  Over \(w^{-1}W\),
we have the following commutative diagrams:
\[
    \begin{tikzpicture}[xscale=2.9,yscale=-1.2]
        \node (A0_1) at (1, 0) {$uw^{-1}W$};
        \node (A1_0) at (0, 2) {$w^{-1}W$};
        \node (A1_2) at (2, 2) {$W$};

                \path (A1_0) edge [->] node[auto]
                    {$\scriptstyle{u}$}
            (A0_1);
        \path (A1_0) edge [->] node[auto] {$\scriptstyle{w}$} (A1_2);
        \path (A0_1) edge [->] node[auto] {$\scriptstyle{wu^{-1}}$} (A1_2);
    \end{tikzpicture}
    \hspace{10pt}
  \begin{tikzpicture}[xscale=2.9,yscale=-1.2]
        \node (A0_1) at (1, 0) {$u^*h_{wu^{-1}}^{-1}((wu^{-1})^*\sE'')$};
        \node (A1_0) at (0, 2) {$h_w^{-1}(w^*\sE'')$};
        \node (A1_2) at (2, 2) {$w^*\sE''$};

                \path (A1_0) edge [->] node[auto]
                    {$\scriptstyle{h_u}$}
            (A0_1);
        \path (A1_0) edge [->] node[auto] {$\scriptstyle{h_w}$} (A1_2);
    \path (A0_1) edge [->] node[auto] {$\scriptstyle{u^*h_{wu^{-1}}}$} (A1_2);
    \end{tikzpicture}
\]
due to the cocycle condition $h_w=(u^*h_{wu^{-1}})\circ h_u$. 
Thus \(h_u\) maps \(\sF''|_{w^{-1}W}\) to
\(u^*(\sF''|_{uw^{-1}W})\) which is exactly $(u^*\sF'')|_{w^{-1}W}$.
 The lemma is proved completely.
\end{proof}

\begin{thm}\label{meromorphic descent for non Noetherian stuff}
    The pullback
    functor \[\Coh(\hat X)\otimes_RK\longrightarrow \Coh^m(\hat{Y}/\hat{X},G)\]
    is  an
    equivalence.
\end{thm}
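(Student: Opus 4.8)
The plan is to establish that the pullback functor $q^{*}\colon\Coh(\hatX)\otimes_RK\to\Coh^m(\sYrho/\hatX,G)$ — which sends $\sE$ to $q^{*}\sE$ with its canonical meromorphic descent datum, and a meromorphic morphism $\sE\to\sE'$ to $q^{*}$ of it — is fully faithful and essentially surjective. The guiding principle is that, once Lemma~\ref{meromorphic descent data are genuine} has brought objects of the target into \emph{genuine} form, the statement reduces to plain faithfully flat descent of coherent sheaves; the one feature to keep an eye on is that $\sYrho$ is only locally Noetherian, which is harmless here — and can in any case be circumvented by restricting to a quasi-compact open $W\subseteq\sYrho$ with $q(W)=\hatX$ (such $W$ exists because $q$ is an open morphism and $\hatX$ is quasi-compact) and invoking Theorem~\ref{meromorphic descent for quasi-compact stuff} for $q|_W\colon W\to\hatX$.

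For essential surjectivity I would start from an object $(\sF,\{h_w\}_{w\in G})$ of $\Coh^m(\sYrho/\hatX,G)$ and use Lemma~\ref{meromorphic descent data are genuine} to assume it genuine, so that $h_w\in\Hom_{\Coh(\sYrho)}(\sF,w^{*}\sF)$ for all $w$. Putting $w'=w^{-1}$ in the cocycle identity $w^{*}(h_{w'})\circ h_w=h_{w'w}$ gives $w^{*}(h_{w^{-1}})\circ h_w=h_e=\id_\sF$, and applying $w^{*}$ to the same identity with $w^{-1}$ in place of $w$ gives $h_w\circ w^{*}(h_{w^{-1}})=\id_{w^{*}\sF}$; since $w^{*}(h_{w^{-1}})$ is again genuine, each $h_w$ is an \emph{isomorphism} in $\Coh(\sYrho)$. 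Thus, via the torsor identification $\sYrho\times_{\hatX}\sYrho\cong\coprod_{w\in G}\sYrho$, the pair $(\sF,\{h_w\})$ is nothing but an honest descent datum on $\sF$ for the faithfully flat morphism $q$. Faithfully flat descent of quasi-coherent sheaves then yields a quasi-coherent $\sG$ on $\hatX$ and an isomorphism $q^{*}\sG\cong(\sF,\{h_w\})$ of descent data; since finite presentation descends along the faithfully flat maps $Y_n\to X_n$ and $\sG$ pulls back to the finitely presented $\sF$, and $\hatX$ is Noetherian, $\sG$ lies in $\Coh(\hatX)$. Regarding $\sG$ as an object of $\Coh(\hatX)\otimes_RK$, the (honest, hence meromorphic) isomorphism above exhibits $(\sF,\{h_w\})$ in the essential image.

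For full faithfulness I would argue descent-theoretically for the internal hom. Given $\sE,\sE'\in\Coh(\hatX)$, morphisms $\sE\to\sE'$ in $\Coh(\hatX)\otimes_RK$ form $\Hom_{\sO_{\hatX}}(\sE,\sE')\otimes_RK=\Gamma(\hatX,\mathcal M)$, where $\mathcal M:=\Homsh_{\sO_{\hatX}}(\sE,\sE')\otimes_RK$ and the last equality uses that $\Gamma(\hatX,-)$ commutes with the localization $-\otimes_RK$, $\hatX$ being quasi-compact and quasi-separated. Flatness of $q$ and coherence of $\sE$ give $q^{*}\mathcal M\cong\Homsh_{\sO_{\sYrho}}(q^{*}\sE,q^{*}\sE')\otimes_RK$, whose global sections are $\Hom_{\Coh^m(\sYrho)}(q^{*}\sE,q^{*}\sE')$; and a morphism $q^{*}\sE\to q^{*}\sE'$ in $\Coh^m(\sYrho/\hatX,G)$ (both objects with the canonical descent datum) is precisely a global section of $q^{*}\mathcal M$ fixed by that datum. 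Faithfully flat descent of sections of $\mathcal M$ along $q$ identifies this with $\Gamma(\hatX,\mathcal M)$, so $q^{*}$ is fully faithful, hence an equivalence.

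The hard part has, by design, been placed in Lemma~\ref{meromorphic descent data are genuine}; what is left is routine descent theory, and the only step I would treat with some care is the promotion of the descended sheaf from quasi-coherent to coherent, carried out level by level on the $X_n$ and using Noetherianness of $\hatX$. I would organise the faithfully flat descent over the non-quasi-compact $\sYrho$ either by working on each $X_n$ and passing to $\varprojlim_n$, or via the quasi-compact reduction to $W$ noted above; I do not anticipate a genuine obstacle on either route.
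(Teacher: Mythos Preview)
Your overall architecture matches the paper's: invoke Lemma~\ref{meromorphic descent data are genuine} for essential surjectivity and prove full faithfulness by a descent argument on the internal Hom. Your essential surjectivity argument is fine; in fact $q\colon\sYrho\to\hatX$ \emph{is} an fpqc cover (since $\hatX$ is Noetherian one can refine by finitely many affines in $\sYrho$ over any affine of $\hatX$), so once the $h_w$ are genuine isomorphisms ordinary fpqc descent applies and coherence descends level by level on the $X_n$.

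The gap is in full faithfulness. Your sentence ``faithfully flat descent of sections of $\mathcal M$ along $q$ identifies this with $\Gamma(\hatX,\mathcal M)$'' hides the whole difficulty. The sheaf $\mathcal M=\Homsh(\sE,\sE')\otimes_RK$ is not a coherent (or adically quasi-coherent) $\sO_{\hatX}$-module, and more importantly on the non-quasi-compact $\sYrho$ one has a strict inclusion
\[
\Hom_{\sO_{\sYrho}}(q^*\sE,q^*\sE')\otimes_RK\ \subsetneq\ \Gamma(\sYrho,\Homsh_{\sO_{\sYrho}}(q^*\sE,q^*\sE')\otimes_RK)
\]
in general: global ``meromorphic'' sections on $\sYrho$ need not have a uniform bound on their pole order. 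Fpqc descent gives you the equalizer diagram only for the left-hand side (after tensoring the genuine equalizer with $K$, and using that $(\prod_w-)\otimes K\hookrightarrow\prod_w(-\otimes K)$ is injective). So you must show that a $G$-invariant element $s$ of the right-hand side actually lies in the left-hand side. This is exactly what the paper proves: pick a quasi-compact open $V\subset\sYrho$ whose $G$-translates cover $\sYrho$; on $V$ some $\pi^ns$ is genuine, and $G$-invariance transports this same $n$ to every $wV$, hence $\pi^ns$ is genuine on all of $\sYrho$.

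Your two suggested workarounds do not close the gap as stated. Passing to $X_n$ does nothing: $Y_n\to X_n$ is still a torsor under the same infinite $G$, hence still not quasi-compact. The $W$-reduction \emph{can} be made to work for full faithfulness, but not by citing Theorem~\ref{meromorphic descent for quasi-compact stuff} alone: Gabber's theorem gives you a $\psi\in\Hom_{\Coh(\hatX)\otimes_RK}(\sE,\sE')$ with $(q^*\psi)|_W=\phi|_W$, and you then need the $G$-invariance argument (translates of $W$ cover $\sYrho$, both sections are $G$-invariant) to conclude $q^*\psi=\phi$ on all of $\sYrho$. That last step is exactly the bounded-denominator/translation argument the paper writes out; you should make it explicit rather than leaving it as ``I do not anticipate a genuine obstacle.''
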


\begin{proof}
It is enough to show that the functor is fully faithful. Then
the statement follows readily from \ref{meromorphic descent
data are genuine}. Now suppose we are given two $R$-torsion free sheaves
$\sF,\sG\in\Coh(\hat{X})$. Set $\Coh^m(q,G)\coloneq\Coh^m(\hat{Y}/\hat{X},G)$.

By fpqc-descent of
quasi-coherent sheaves, we have an exact
sequence of  $R$-torsion free modules:
\[0\arr\Hom_{\sO_{\hat{X}}}(\sF,\sG)\arr\Hom_{\sO_{\sYrho}}(q^*\sF,q^*\sG)\arr\prod_{w\in
G}\Hom_{\sO_{\sYrho}}(q^*\sF,q^*\sG)\]
where the second map is the pullback, and the third map is
$w^*-\id^*$.  Note that the natural map \[(\prod_{w\in
    G}\Hom_{\sO_{\sYrho}}(q^*\sF,q^*\sG))\otimes_RK\arr\prod_{w\in
G}(\Hom_{\sO_{\sYrho}}(q^*\sF,q^*\sG)\otimes_RK)\] is injective, so
we get an exact sequence:
\begin{equation}\label{sequence after descent}0\arr\Hom_{\sO_{\hat{X}}}(\sF,\sG)\otimes_RK\arr\Hom_{\sO_{\sYrho}}(q^*\sF,q^*\sG)\otimes_RK\arr\prod_{w\in
G}(\Hom_{\sO_{\sYrho}}(q^*\sF,q^*\sG)\otimes_RK)\end{equation} By the very
definition of the category $\Coh^m(q,G)$, we have an exact sequence 
\begin{equation}\label{sequence before descent}
\begin{aligned}0\arr\Hom_{\Coh^m(q,G)}(q^*\sF,q^*G)&\arr
    \Gamma(\sYrho,\Homsh_{\sO_{\sYrho}}(q^*\sF,q^*\sG)\otimes_RK)\\ &\arr\prod_{w\in
G}\Gamma(\sYrho,\Homsh_{\sO_{\sYrho}}(q^*\sF,q^*\sG)\otimes_RK)
    \end{aligned} \end{equation}
where the last map is again given by $w^*-\id^*$. The inclusion
\[\Hom_{\sO_{\sYrho}}(q^*\sF,q^*\sG)\otimes_RK \subseteq
\Gamma(\sYrho,\Homsh_{\sO_{\sYrho}}(q^*\sF,q^*\sG)\otimes_RK)\] which
identifies $\Hom_{\sO_{\sYrho}}(q^*\sF,q^*\sG)\otimes_RK$
as the  subspace
 of elements $s$ of the right hand side such that $\pi^ns$ is contained in
\[\Hom_{\sO_{\sYrho}}(q^*\sF,q^*G)\subseteq
\Gamma(\sYrho,\Homsh_{\sO_{\sYrho}}(q^*\sF,q^*\sG)\otimes_RK)\] for some
$n\in\N$, induces a map (\refeq{sequence after descent}) $\Rightarrow$
(\refeq{sequence before descent}). We have to show that the kernel part of (\refeq{sequence after descent}) $\Rightarrow$
(\refeq{sequence before descent}) namely, the map 
\[\Hom_{\sO_{\hat{X}}}(\sF,\sG)\otimes_RK\arr\Hom_{\Coh^m(q,G)}(q^*\sF,q^*G)
\]
is an isomorphism

To show this, we just
have to show that if
\[s\in\Gamma(\sYrho,\Homsh_{\sO_{\sYrho}}(q^*\sF,q^*\sG)\otimes_RK)\]
such that $w^*s=s$ for all $w\in G$, then there exist $n$ large such that
$\pi^ns\in\Hom_{\sO_{\sYrho}}(q^*\sF,q^*G)$.

Let $U\to X_0$ be a quasi-compact \'etale map which trivializes
the $G$-torsor $Y_0\to X_0$. Then the image $V$ of $U\to Y_0$ is a
quasi-compact open whose $G$-translates $\{V_w\}_{w\in G}$ cover
$Y_0$. Since $V$ is quasi-compact,  $\pi^ns|_V\in\Hom_{\sO_{{\sYrho}|_V}}(q^*\sF|_V, q^*\sG|_V)$.
Now the condition $w^*s=s$ implies that $\pi^ns|_{V_w}\in\Hom_{\sO_{{\sYrho}|_{V_w}}}(q^*\sF|_{V_w},
q^*\sG|_{V_w})$ for all $w\in G$. Thus $\pi^ns\in\Hom_{\sO_{\sYrho}}(q^*\sF,q^*G)$.
\end{proof}

\begin{rmk} \label{functoriality of meromorphic descent} The
    above pullback functor is functorial in the following
    sense. If there is a Cartesian diagram of locally Noetherian
    formal schemes over $\Spf(R)$
    \[
    \begin{tikzpicture}[xscale=2.0,yscale=1.2,bmr/.pic={\draw (0,0)--++(-90:2.5mm)--++(180:2.5mm)--++(90:2.5mm)--++(0:2.5mm);}]
        \path
            (0,0)     node (F) {$\sYrho'$}
            +(-25:.9) pic[scale=1,red]{bmr}
            +(0:1.5)  node (star) {$\sYrho$}
            ++(-90:1) node (X) {$\hat{X}'$}
            +(0:1.5)  node (Y) {$\hat{X}$};
        \draw[->] (F)--(star)
            node[midway,above,scale=.6]{$\beta$};
        \draw[->] (F)--(X) node[midway,anchor=east,scale=.6]{$q'$};
        \draw[->] (X)--(Y) node[midway,above,scale=.6]{$\alpha$};
        \draw[->] (star)--(Y) node[midway,anchor=west,scale=.6]{$q$};
    \end{tikzpicture} 
\] where $q,q'$ are $G$-torsors, 
and if $\sF$ is a coherent sheaf on $\hat{X}$ and
$(q^*\sF,\{h_w\}_{w\in G})$ is the corresponding pair, then the
pair corresponding to
$\alpha^*\sF$ is $(\beta^*q^*\sF,\{\beta^*h_w\}_{w\in G})$.
\end{rmk}

\subsection{The specialization functor}
We are in Setting \ref{convention for the descent along G-torsors}
except that here we assume that $R$ is of characteristic
$p$.
Let $X$ be a proper scheme over $\Spec(R)$, and let $\hat{X}$ be
    the formal scheme associated with the special fiber of
    $X\to \Spec(R)$. Suppose that the special fiber $X_0$ is
    connected and $\xi\in X_0$ is a geometric point. Set
    $X_K\coloneq X\times_{\Spec(R)}\Spec(K)$.

    \begin{thm}\label{existence theorem}
    There is a
    natural equivalence of categories:
    \[\Coh(\hat{X})\otimes_RK\xrightarrow{}\Coh(X_K)\] 
    which is additive, monoidal and exact.
\end{thm}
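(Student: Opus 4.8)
The plan is to factor the claimed equivalence through Grothendieck's formal GAGA and the general principle that inverting $\pi$ on the coherent category of a proper $R$-scheme (or its completion) produces the coherent category of the generic fiber. First I would recall that since $X$ is proper over $\Spec(R)$ and $R$ is complete (hence $\pi$-adically complete Noetherian), the Grothendieck existence theorem gives an equivalence of Abelian categories $\Coh(X^{\wedge})\simeq \Coh(X)$, where $X^{\wedge}=\hat X$ is the formal completion along the special fiber; this is exact and compatible with tensor products and internal Hom. So it suffices to produce a natural additive monoidal exact equivalence $\Coh(X)\otimes_R K \xrightarrow{\ \sim\ }\Coh(X_K)$, where on the left one literally inverts $\pi$ in the Hom-groups in the sense of the definition of $\sC\otimes_R K$ recalled earlier in the excerpt.

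The functor itself is the pullback along the open immersion (more precisely, flat base change) $j\colon X_K=X\times_{\Spec R}\Spec K\hookrightarrow X$: a coherent sheaf $\sF$ on $X$ goes to $j^*\sF=\sF|_{X_K}$, and this clearly kills $\pi$-torsion and inverts $\pi$ on Hom-groups, so it factors through $\Coh(X)\otimes_R K$. Additivity and exactness are immediate (localization is exact and $\otimes_R K$ is exact by the cited \cite[Lemma 1.5]{DTZ18}); monoidality is immediate because pullback is monoidal. The content is full faithfulness and essential surjectivity. For essential surjectivity: given $\sG\in\Coh(X_K)$, one extends it to a coherent sheaf on $X$ — this is the standard fact that any coherent sheaf on the generic fiber of a Noetherian scheme extends to a coherent sheaf on the whole scheme (e.g.\ \cite[\href{https://stacks.math.columbia.edu/tag/01PD}{01PD}]{stacks-project}, the very tool already used in the proof of Lemma \ref{meromorphic descent data are genuine}), and $j^*$ of such an extension recovers $\sG$. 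For full faithfulness: for $\sF,\sF'\in\Coh(X)$ one needs the natural map
\[
\Hom_{\sO_X}(\sF,\sF')\otimes_R K \longrightarrow \Hom_{\sO_{X_K}}(\sF|_{X_K},\sF'|_{X_K})
\]
to be an isomorphism. Injectivity holds because $\Hom_{\sO_X}(\sF,\sF')$ is a finitely generated $R$-module (properness of $X$ over the Noetherian ring $R$, plus coherence) that is separated for the $\pi$-adic topology, so no nonzero element dies after inverting $\pi$; more simply, $\Hom_{\sO_X}(\sF,\sF')\to \Hom_{\sO_{X_K}}(\ldots)$ has $\pi$-power torsion kernel. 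Surjectivity holds because any $\sO_{X_K}$-linear map extends, after clearing denominators, to a map $\pi^n\sF\to\sF'$ on $X$ using again a coherent-extension / clearing-denominators argument over the Noetherian base; dividing back by $\pi^n$ in $\Hom\otimes_R K$ shows the map is hit.

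The step I expect to be the main obstacle — or at least the one requiring the most care — is establishing full faithfulness cleanly, i.e.\ verifying that $\Hom_{\sO_X}(\sF,\sF')\otimes_R K\to\Hom_{\sO_{X_K}}(\sF|_{X_K},\sF'|_{X_K})$ is bijective for arbitrary coherent $\sF,\sF'$ (not assumed torsion-free). The cleanest route is: first reduce to $\sF$ torsion-free by replacing $\sF$ with its image in $\sF\otimes_R K$, exactly as in the proof of Lemma \ref{meromorphic descent data are genuine} (this does not change either side of the comparison); then use that $R$ is a DVR so that $\sF$ locally is a finite free plus torsion, and invoke the finiteness of coherent cohomology (here just $\mathcal{H}om$ global sections, finite over $R$ by properness) to conclude that inverting $\pi$ commutes with taking global $\mathcal{H}om$. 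One should also spell out that the construction is \emph{natural}, i.e.\ functorial in $X$ over $\Spec R$; this follows formally from the functoriality of formal completion, of GAGA, and of flat base change to the generic fiber. Finally, assembling the two equivalences $\Coh(\hat X)\simeq\Coh(X)$ and $\Coh(X)\otimes_R K\simeq\Coh(X_K)$ — the first already exact monoidal, the second shown so above — yields the desired additive, monoidal, exact equivalence $\Coh(\hat X)\otimes_R K\simeq\Coh(X_K)$.
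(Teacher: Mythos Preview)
Your proposal is correct and follows essentially the same route as the paper: factor through $\Coh(X)$ via Grothendieck's existence theorem, then identify $\Coh(X)\otimes_R K$ with $\Coh(X_K)$ via pullback to the generic fiber. The paper records the second equivalence in one line as a consequence of $X$ being Noetherian, while you supply the details (invoking properness for the finiteness of Hom-modules in the full-faithfulness step, which is more than strictly needed---quasi-compact quasi-separated suffices via flat base change---but certainly available here).
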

\begin{proof} By Grothendieck's existence theorem we have an
    additive, monoidal and exact equivalence
    \[\Coh(X)\otimes_RK\xrightarrow{\
    \simeq\ }\Coh(\hat{X})\otimes_RK\]One the other hand, since
    $X$ is Noetherian the natural pullback functor
    \[\Coh(X)\otimes_RK\arr\Coh(X_K)\]
    is an equivalence. This completes the proof.
\end{proof}

\begin{const}\label{construction of the specialization map for
    proetale} Let $\hat{X}^{(i)}$ denote the formal scheme
    obtained by the following Cartesian diagram \[
    \begin{tikzpicture}[xscale=2.0,yscale=1.2,bmr/.pic={\draw (0,0)--++(-90:2.5mm)--++(180:2.5mm)--++(90:2.5mm)--++(0:2.5mm);}]
        \path
            (0,0)     node (F) {$\hat{X}^{(1)}$}
            +(-28:.8) pic[scale=1,red]{bmr}
            +(0:1.5)  node (star) {$\hat{X}$}
            ++(-90:1) node (X) {$\Spf(R)$}
            +(0:1.5)  node (Y) {$\Spf(R)$};
        \draw[->] (F)--(star);
        \draw[->] (F)--(X);
        \draw[->] (X)--(Y)
            node[midway,below,scale=.6]{$F_{\Spf(R)}$};\draw[->] (star)--(Y);
        \end{tikzpicture} \] where $F_{\Spf(R)}$ denotes the
        absolute Frobenius of $\Spf(R)$. The universal property
        of pullback diagrams provides a map
        $\hat{X}\to\hat{X}^{(1)}$ over $\Spf(R)$. In other words,
        $\hat{X}\to\hat{X}^{(1)}$ is the \textit{relative
        Frobenius} of
        $\hat{X}/\Spf(R)$. Then we can define the relative
        Frobenius maps
        $F_{\hat{X}^{(i)}/R}\colon\hat{X}^{(i)}\to \hat{X}^{(i+1)}$ of
        $\hat{X}^{(i)}/\Spf(R)$ inductively for each $i\in\N$.
        Similarly, we have relative Frobenius maps
        $F_{X_K^{(i)}/K}\colon X_K^{(i)}\to X_K^{(i+1)}$
        of $X_K^{(i)}/K$.

        We want to construct the specialization functor
        \[\spe^K_{\pet}\colon
\Rep_K^\cts(\pi_1^\pet(X_0,\xi))\arr\Fdiv(X_K)\]
(cf.~\nameref{s:notation} (1), (4)).

Given a continuous
        representation \[\rho\colon\pi_1^\pet(X_0,\xi)\to
        \GL(V)\]
where $V$ is a finite dimensional $K$-vector space, we take $G\subseteq
\GL(V)$ the image of $\rho$. By \cite[Lemma 7.4.6]{BS15}, we get a $G$-torsor
$q_0\colon Y_0\to X_0$. Extending $q_0$ to higher thickenings we get a
$G$-torsor $q\colon \hat{Y}\to\hat{X}$.

Now $V\otimes_R\sO_{\hat{Y}}$ equipped with the $G$-action is an object in
$\Coh^m(\hat{Y}/\hat{X})$, so it corresponds, via \ref{meromorphic descent for
non Noetherian stuff} and \ref{existence theorem}, to a coherent sheaf $E_0$
on $X_K$. Moreover, we have commutative diagrams 
        \[
            \begin{tikzpicture}[xscale=2.0,yscale=1.2,bmr/.pic={\draw (0,0)--++(-90:2.5mm)--++(180:2.5mm)--++(90:2.5mm)--++(0:2.5mm);}]
                \path
                    (0,0)     node (F) {$\sYrho^{(i)}$}
                    +(-28:.8) pic[scale=1,red]{bmr}
                    +(0:1.5)  node (star) {$\sYrho^{(i+1)}$}
                    ++(-90:1) node (X) {$\hat{X}^{(i)}$}
                    +(0:1.5)  node (Y) {$\hat{X}^{(i+1)}$};
                \draw[->] (F)--(star);
                \draw[->]
                    (F)--(X)node[midway,left,scale=.6]{$q^{(i)}$};
                \draw[->] (X)--(Y)
                    node[midway,above,scale=.6]{};\draw[->]
                        (star)--(Y)node[midway,right,scale=.6]{$q^{(i+1)}$};
            \end{tikzpicture} 
        \] which are Cartesian because $\sYrho^{(i)}\to
        \hat{X}^{(i)}$ are \'etale. Each $q^{(i)}$ will produce a coherent
        sheaf $E_{i}$, and \ref{functoriality of meromorphic descent}
        guarantees the isomorphism $\sigma_i$.  
 In this way, we get an object $(E_i,\sigma_i)_{i\in\N}\in\Fdiv(X_K)$. 

 Note that instead of taking $G$ to be the image of $\rho$ we can also take
 any discrete quotient \[\pi_1^\pet(X_0,\xi)\twoheadrightarrow G\to
 \GL(V)\] in the middle of $\rho$. The object
 $(E_i,\sigma_i)_{i\in\N}\in\Fdiv(X_K)$ obtained in this way will be
 canonically isomorphic to the one we constructed above. Using this we see that
 our construction is indeed functorial, so it defines the desired
 functor $\spe^K_{\pet}$.
\end{const}

\begin{const}\label{construction of the specialization map for
    etale fundamental group}

    Let's consider the construction of the functor 
\[\spe^K_\et\colon
\Rep_K^\cts(\pi_1^\et(X_0,\xi))\arr\Fdiv(X_K)\]
which is a folklore.

    Given a finite
    dimensional continuous $K$-representation
    $\rho\colon\pi_1^\et(X_0,\xi)\to \GL(V)$, let
    $G\subseteq\GL(V)$ be the image of $\rho$. Then $G$ is a finite group and
    the surjection \[\pi_1^\et(X_0,\xi)\twoheadrightarrow G\]
    corresponds to a pointed $G$-torsor $q_0\colon Y_0\to X_0$ which extends
    automatically to a $G$-torsor $q\colon \hat{Y}\to \hat{X}$.
    By Grothendieck's existence theorem we get a (unique) $G$-torsor
$\mathfrak{q}\colon Y\to X$ whose formal completion along $X_0\hookrightarrow
X$ is $q$.
The general fiber of $\mathfrak{q}$ is a $G$-torsor $q_K\colon Y_K\to X_K$ which
extends to cartesian diagrams:
\begin{equation}\label{cartesian diagram of the relative
    Frobenius of the generic fibers}
            \begin{tikzpicture}[xscale=2.0,yscale=1.2, baseline=(current  bounding  box.center),bmr/.pic={\draw
                (0,0)--++(-90:2.5mm)--++(180:2.5mm)--++(90:2.5mm)--++(0:2.5mm);}]
                \path
                    (0,0)     node (F) {$Y_K^{(i)}$}
                    +(-28:.9) pic[scale=1,red]{bmr}
                    +(0:1.5)  node (star) {$Y_K^{(i+1)}$}
                    ++(-90:1) node (X) {${X}_K^{(i)}$}
                    +(0:1.5)  node (Y) {${X}_K^{(i+1)}$};
                \draw[->] (F)--(star);
                \draw[->]
                    (F)--(X)node[midway,left,scale=.6]{$q_K^{(i)}$};
                \draw[->] (X)--(Y)
                    node[midway,above,scale=.6]{};\draw[->]
                        (star)--(Y)node[midway,right,scale=.6]{$q_K^{(i+1)}$};
            \end{tikzpicture} 
\end{equation}
Now we apply fpqc-descent of quasi-coherent sheaves to descent the
        $G$-sheaf $\sO_{Y_K^{(i)}}\otimes_KV$ to a coherent sheaf $E_i$ over
        $X_K^{(i)}$ along the $G$-torsor $q^{(i)}$. We also have isomorphisms
        $\sigma_i\colon F_{X_K^{(i)}/K}^*E_{i+1}\to E_i$ given by
        \eqref{cartesian diagram of the relative Frobenius of the generic
        fibers} and the functoriality of the fpqc-descent of quasi-coherent
        sheaves. This defines an object $(E_i,\sigma_i)_{i\in\N}\in\Fdiv(X_K)$.
        Just as  \ref{construction of the specialization map for
        proetale}, the construction is functorial, so it defines  the
    functor \[\spe^K_\et\colon
\Rep_K^\cts(\pi_1^\et(X_0,\xi))\arr\Fdiv(X_K).\] 
%
\end{const}

\begin{thm} \label{new construction recovers the old one} The following natural diagram of categories
    \[\begin{tikzpicture}[xscale=4.9,yscale=-1.2]
        \node (A0_0) at (0, 0) {$\Rep_K^\cts(\pi_1^\et(X_0,\xi))$};
        \node (A0_1) at (1, 0) {$\Fdiv(X_K)$};
        \node (A1_0) at (0, 1) {$\Rep_K^\cts(\pi_1^\pet(X_0,\xi))$};
        \node (A1_1) at (1, 1) {$\Fdiv(X_K)$};

        \draw[>=latex,->] (A0_0) --
            (A0_1)node[midway,above,scale=.9]{$\spe^K_\et$};

        \draw[>=latex,->] (A1_0) -- 
            (A1_1)node[midway,above,scale=.9]{$\spe^K_{\pet}$};

        \path (A0_1) edge [->] node[auto]
            {$\scriptstyle{=}$} (A1_1);
        \path (A0_0) edge [->] node[auto]
            {$\scriptstyle{}$} (A1_0);
    \end{tikzpicture}
\]
is 2-commutative.
\end{thm}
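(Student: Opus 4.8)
The plan is to reduce to the case where $G$ is finite, to observe that there Theorem~\ref{meromorphic descent for non Noetherian stuff} reduces to Gabber's Theorem~\ref{meromorphic descent for quasi-compact stuff}, and then to compare the two specialization functors by a diagram chase through Grothendieck's existence theorem (Theorem~\ref{existence theorem}). First, the left vertical functor of the diagram is precomposition with the canonical map $\pi_1^\pet(X_0,\xi)\to\pi_1^\et(X_0,\xi)$, which has dense image; hence for an étale representation $\rho_0\colon\pi_1^\et(X_0,\xi)\to\GL(V)$ its restriction $\rho$ to $\pi_1^\pet(X_0,\xi)$ has the same image $G\subseteq\GL(V)$, a finite group, and the $G$-torsor over $X_0$ used to compute $\spe^K_\pet(\rho)$ — the one produced by \cite[Lemma 7.4.6]{BS15} — coincides with the finite étale $G$-torsor $q_0\colon Y_0\to X_0$ classified by $\pi_1^\et(X_0,\xi)\twoheadrightarrow G$ that is used to compute $\spe^K_\et(\rho_0)$. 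So both constructions start from the same data: $q_0$, its unique extension $q\colon\hat Y\to\hat X$ over the thickenings, its algebraization $\mathfrak q\colon Y\to X$, and the Frobenius twists $q^{(i)}$, $\mathfrak q^{(i)}$, $q_K^{(i)}$. It remains to show that the $F$-divided sheaves $(E_i,\sigma_i)_{i\in\N}$ they produce are canonically isomorphic, naturally in $\rho_0$.

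For the sheaves $E_i$: since $G$ is finite, $\hat Y$ is Noetherian, and under $\hat Y\times_{\hat X}\hat Y\simeq\hat Y\times G$ the category $\Coh^m(\hat Y/\hat X,G)$ is identified with the category $\Coh^m(\hat Y/\hat X)$ of Definition~\ref{def of meromorphic descent for quasi-compact stuff}, so that Theorem~\ref{meromorphic descent for non Noetherian stuff} becomes exactly Gabber's Theorem~\ref{meromorphic descent for quasi-compact stuff}, both functors being the formation of the canonical descent datum. Now $X$, $Y$, $Y\times_X Y$ and the higher self-products of $Y$ over $X$ are all proper over $R$ (being finite over $X$), their formal completions along the special fiber are $\hat X$, $\hat Y$, $\hat Y\times_{\hat X}\hat Y\simeq\hat Y\times G$, and so on; hence Theorem~\ref{existence theorem} (whose proof uses only properness over $R$) supplies a compatible system of additive monoidal exact equivalences $\Coh(\widehat{(-)})\otimes_RK\simeq\Coh((-)_K)$ over the first terms of the \v{C}ech nerve of $q$, respecting all the face and degeneracy maps. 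Since pullback along $\hat Y\to\hat X$ is the formal completion of pullback along $Y\to X$ and Grothendieck existence is functorial for the proper morphism $\mathfrak q$, this system transports ``pull back along $q$ with its canonical meromorphic descent datum'' to ``pull back along $q_K$ with its canonical fpqc descent datum''. Feeding in the object $V\otimes_R\sO_{\hat Y}$ with its $G$-action — whose image on $Y_K$ is $\sO_{Y_K}\otimes_KV$ with the standard $G$-equivariant structure, the genuine model of Lemma~\ref{meromorphic descent data are genuine} being irrelevant here because the output of Construction~\ref{construction of the specialization map for proetale} depends only on the isomorphism class of $(V\otimes_R\sO_{\hat Y},\{h_w\})$ in $\Coh^m(\hat Y/\hat X,G)$ — one obtains a canonical isomorphism between the $E_0$ of Construction~\ref{construction of the specialization map for proetale} and the $E_0$ of Construction~\ref{construction of the specialization map for etale fundamental group}; the same argument applied to each $q^{(i)}$ gives isomorphisms $E_i^\pet\simeq E_i^\et$.

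For the Frobenius data $\sigma_i$: the Cartesian square~\eqref{cartesian diagram of the relative Frobenius of the generic fibers} relating $q_K^{(i)}$ and $q_K^{(i+1)}$ is the generic fiber of the Cartesian square over $X$ relating $\mathfrak q^{(i)}$ and $\mathfrak q^{(i+1)}$, whose formal completion along the special fiber is precisely the Cartesian square over $\hat X$ used in Construction~\ref{construction of the specialization map for proetale}. Since $\sigma_i^\pet$ is produced from the latter via Remark~\ref{functoriality of meromorphic descent} and $\sigma_i^\et$ from the former via functoriality of fpqc descent, the isomorphisms of the previous paragraph intertwine $\sigma_i^\pet$ with $\sigma_i^\et$. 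Hence $(E_i,\sigma_i)^\pet\simeq(E_i,\sigma_i)^\et$ in $\Fdiv(X_K)$, and this isomorphism is natural in $\rho_0$ because every ingredient is functorial. This establishes the asserted $2$-commutativity.

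I expect the main difficulty to lie not in any single deep step but in the bookkeeping of the second paragraph: one must carry the \emph{canonical} meromorphic descent datum on $\hat Y/\hat X$ and the \emph{canonical} fpqc descent datum on $Y_K/X_K$ simultaneously through the Grothendieck-existence equivalences over each stratum of the \v{C}ech nerve, and then verify compatibility with the Frobenius pullback squares. The single substantive observation that makes all of this go through is that for finite $G$ Theorem~\ref{meromorphic descent for non Noetherian stuff} is nothing more than Gabber's Theorem~\ref{meromorphic descent for quasi-compact stuff}; once the descent in play is ordinary effective fpqc descent, the comparison is formal.
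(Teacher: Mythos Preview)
Your proof is correct and follows essentially the same route as the paper's: both arguments observe that an \'etale representation has finite monodromy $G$, that the resulting $G$-torsor $q\colon\hat Y\to\hat X$ algebraizes to $\mathfrak q\colon Y\to X$, and that under the existence-theorem identifications $\Coh(\hat X)\otimes_RK\simeq\Coh(X_K)$ and $\Coh(\hat Y)\otimes_RK\simeq\Coh(Y_K)$ the meromorphic descent of Theorem~\ref{meromorphic descent for non Noetherian stuff} becomes ordinary fpqc descent along $q_K$. Your write-up is simply more explicit about the bookkeeping (the \v Cech nerve, the Frobenius squares, naturality), which the paper leaves implicit.
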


\begin{proof}
    The category $\Rep_K^\cts(\pi_1^\et(X_0,\xi))$ is the full
    subcategory of $\Rep_K^\cts(\pi_1^\pet(X_0,\xi))$ consisting
    of representations whose monodromy groups are finite. Suppose in
    \ref{construction of the specialization map for proetale} the image
    $G\subseteq \GL(V)$ is finite. Then the $G$-torsor $q\colon \hat{Y}\to\hat{X}$ is algebraizable,
    so there exists a unique  $G$-torsor $\mathfrak{q}\colon Y\to X$ whose
    formal completion is $q$. To finish the proof one just has to notice that if
    we identify $\Coh(\hat{X})\otimes_RK$ (resp. $\Coh(\hat{Y})\otimes_RK$) with  
    $\Coh(X_K)$ (resp. $\Coh(Y_K)$) via \ref{existence theorem}, then the
    meromorphic descent in
    \ref{meromorphic descent for non Noetherian stuff} is nothing but the
    fpqc-descent. 
\end{proof}

\subsection{The specialization map of fundamental groups} We resume the
notations and conventions in the previous subsection. We assume, in addition,
that  $X$ and $X_K$ are connected, and that $\eta\in X_K$ is a geometric point.
These additional assumptions are just to
make sense of the fundamental groups of $X$ and $X_K$.  

Recall that we have a specialization map of the \'etale
    fundamental groups
    \begin{equation}\label{eet specialization - article body}
    \pi_1^\et(X_K,\eta)\arr
    \pi_1^\et(X,\eta)\simeq
    \pi_1^\et(X,\xi)\xleftarrow{\ \ \cong\ \
    }\pi_1^\et(X_0,\xi)
\end{equation} 
which is defined up to a choice of a
    path from $\eta$ to $\xi$. We want to recover this map from the
    specialization functor $\spe^K_\et$ (cf.~\ref{construction of the
    specialization map for etale fundamental group}), which does not depend on $\eta$. 

Suppose $\eta$ comes from a rational point $\eta_0$,
\emph{i.e.} there is a factorization
\[\eta\colon\Spec(\bar{K})\to\Spec(K)\xrightarrow{\eta_0}
X_K\] The pro-constant quotient $\pi^G(X_K/K,\eta_0)$ of the
Nori-étale fundamental group $\pi^E(X_K/K,\eta_0)$ (cf.~\cite[Definition 4.5]{Zhang2015} or the gerbe version \cite[Definition 4.1]{TZ1}) is the quotient of the étale fundamental group
$\pi_1^\et(X_K,\eta)=\pi^G(X_K/K,\eta)$
classifying finite étale covers equipped with a $K$-\textit{rational
point} over $\eta_0$. We have fully faithful inclusions:
\[
    \Rep_K^\cts(\pi^G(X_K/K,\eta_0))\subseteq\Rep_K(\pi^E(X_K/K,\eta_0))\subseteq\Fdiv(X_K)
\]
where the first inclusion corresponds to the pro-constant quotient map, and the second inclusion is obtained by taking the
essentially finite objects of $\Fdiv(X_K)$ (cf.~\cite[Corollary 12, Proposition 13]{dS2007},\cite[Theorem 5.8, Theorem 6.23]{TZ1}). The category $\Fdiv(X_K)$ is Tannakian
(see \cite[Theorem I (3)]{TZ1}
). The rational point $\eta_0$ provides a neutral fiber functor
\[\ev_{\eta_0}\colon \Fdiv(X_K) \arr \Vect_K\]
\[
    \hspace{20pt}    (E_i,\sigma_i)_{i\in\N}\longmapsto \eta_0^*E_0
\]    for
$\Fdiv(X_K)$. We denote
$\pi_1^\Fdiv(X_K,\eta_0)$ the affine $K$-group scheme corresponding
to the neutral Tannakian category $(\Fdiv(X_K),\ev_{\eta_0})$.

\subsubsection{When $X$ has an  $R$-rational point}

\begin{prop} \label{Specialization of the etale fundamental group
    when R is strictly Henselian} Suppose that the residue field
    $k$ of $R$ is separably closed, and
        there is a $R$-rational point $(\xi,\eta_0)\in X(R)$. Then the functor $\spe^K_\et$ factors as 
\[
    \Rep_K^\cts(\pi_1^\et(X_0,\xi))\arr\Rep_K^\cts(\pi^G(X_K/K,\eta_0))\subseteq\Fdiv(X_K)
\]
 which induces 
maps of neutral Tannakian categories 
\begin{equation}\label{specialization of neutral Tannakian categories} (\Rep_K^\cts(\pi_1^\et(X_0,\xi)),
    F_{\xi})\arr(\Rep_K^\cts(\pi^G(X_K/K,\eta_0),F_{\eta_0})\arr
    (\Fdiv(X_K),\ev_{\eta_0})
\end{equation}
where $F_\xi$ and $F_{\eta_0}$ denote the forgetful functor to $\Vect_K$.
By taking Tannakian dual we get  homomorphisms of $K$-group
schemes
\begin{equation}
    \pi_1^\Fdiv(X_K,\eta)\relbar\joinrel\twoheadrightarrow\pi^G(X_K/K,\eta_0)\arr\pi_1^\et(X_0,\xi).\end{equation}
    By composing the second map with the quotient map $\pi_1^\et(X_K,\eta)\twoheadrightarrow
\pi^G(X_K/K,\eta_0)$ one recovers the specialization  
 map \eqref{eet specialization - article body}
    \[
\pi_1^\et(X_K,\eta)\twoheadrightarrow
\pi^G(X_K/K,\eta_0)\longrightarrow \pi_1^\et(X_0,{\xi}).
\]
Moreover, there is a canonical path between the geometric points $\xi$ and
\[\eta\colon \Spec(\bar{K})\arr \Spec(K)\xrightarrow{\eta_0} X.\]
\end{prop}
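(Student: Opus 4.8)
The plan is to unwind Construction~\ref{construction of the specialization map for etale fundamental group} for a single finite-monodromy representation and then pass to Tannaka duals. Write $\mathrm{sp}\colon\pi_1^\et(X_K,\eta)\to\pi_1^\et(X_0,\xi)$ for the specialization map \eqref{eet specialization - article body}. Fix a representation $\rho\colon\pi_1^\et(X_0,\xi)\to\GL(V)$ with finite image $G$ and let $\rho'\colon\pi_1^\et(X_0,\xi)\twoheadrightarrow G$ be the induced surjection, so that $\rho$ is $\rho'$ followed by $G\hookrightarrow\GL(V)$. As in that construction we obtain the pointed $G$-torsor $q_0\colon Y_0\to X_0$ (pointed at some $y_0$ over $\xi$), its thickening $q\colon\hat Y\to\hat X$, its algebraisation $\mathfrak q\colon Y\to X$, and the generic fibre $q_K\colon Y_K\to X_K$, so that $\spe^K_\et(\rho)=(E_i,\sigma_i)_{i\in\N}$ with $E_0$ the fpqc descent of $\sO_{Y_K}\otimes_KV$ along $q_K$; in particular $\spe^K_\et(\rho)$ is the bundle associated to the finite étale $G$-torsor $q_K$ and the $G$-representation $V$.

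The factorisation through $\Rep_K^\cts(\pi^G(X_K/K,\eta_0))$ amounts to producing a $K$-rational point of $Y_K$ over $\eta_0$, and this is the one point where the hypotheses enter essentially. Pull $\mathfrak q$ back along the section $s=(\xi,\eta_0)\colon\Spec(R)\to X$: since $R$ is a complete DVR with separably closed residue field $k$ we have $\pi_1^\et(\Spec(R))=\pi_1^\et(\Spec(k))=1$, so the finite étale cover $s^{*}Y\to\Spec(R)$ is a disjoint union of copies of $\Spec(R)$, and the component meeting $y_0$ gives a section $\tilde s\colon\Spec(R)\to Y$ of $\mathfrak q$ over $s$ whose generic fibre $\tilde\eta_0\in Y_K(K)$ lies over $\eta_0$. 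Hence $q_K$ is a $G$-torsor over $X_K$ carrying a $K$-point over $\eta_0$, so by the cited description of the inclusions $\Rep_K^\cts(\pi^G(X_K/K,\eta_0))\subseteq\Rep_K(\pi^E(X_K/K,\eta_0))\subseteq\Fdiv(X_K)$ (\cite[Corollary 12, Proposition 13]{dS2007}, \cite[Theorem 5.8, Theorem 6.23]{TZ1}) the object $\spe^K_\et(\rho)$ lies in $\Rep_K^\cts(\pi^G(X_K/K,\eta_0))$. Moreover $\tilde\eta_0$ trivialises the $G$-torsor $(Y_K)_{\eta_0}=Y_K\times_{X_K,\eta_0}\Spec(K)$, so base change of the fpqc descent identifies $\ev_{\eta_0}(\spe^K_\et(\rho))=\eta_0^{*}E_0$ with $V$ functorially in $\rho$ and compatibly with tensor products (the identification being induced by the single section $\tilde s$), while $\ev_{\eta_0}$ restricted to $\Rep_K^\cts(\pi^G(X_K/K,\eta_0))$ is its forgetful functor $F_{\eta_0}$. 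As $\spe^K_\et$ is additive, exact and $K$-linear monoidal (being built from fpqc descent), we get $\ev_{\eta_0}\circ\spe^K_\et\cong F_\xi$, so the factorisation refines to the chain of maps of neutral Tannakian categories \eqref{specialization of neutral Tannakian categories}.

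Taking Tannaka duals reverses all arrows: the inclusions above are exact, fully faithful and closed under subobjects, so their duals are faithfully flat, yielding $\pi_1^\Fdiv(X_K,\eta_0)\twoheadrightarrow\pi^E(X_K/K,\eta_0)\twoheadrightarrow\pi^G(X_K/K,\eta_0)$, and $\spe^K_\et$, viewed as a functor into $\Rep_K^\cts(\pi^G(X_K/K,\eta_0))$, dualises to a homomorphism $c\colon\pi^G(X_K/K,\eta_0)\to\pi_1^\et(X_0,\xi)$. Before identifying $c$, observe that $\xi$ and $\eta$ both factor through $s$, as geometric points lying over the closed and the generic point of $\Spec(R)$ respectively; since $\pi_1^\et(\Spec(R))=1$ supplies the unique path between these two geometric points of $\Spec(R)$, transporting it along $s$ gives a canonical path between $\xi$ and $\eta$, under which the pointing $y_0$ corresponds to $\tilde\eta_0$. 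Now, since $q_K$ is the generic fibre of the algebraisation of the $G$-cover of $X_0$ classified by $\rho'$, it is the $G$-torsor over $X_K$ classified by $\rho'\circ\mathrm{sp}$, so the $\pi_1^\et(X_K,\eta)$-representation underlying the essentially finite bundle $\spe^K_\et(\rho)$ is $\rho\circ\mathrm{sp}$; by the construction of $c$ the same representation is $V=\eta_0^{*}E_0$ equipped with the $\pi^G(X_K/K,\eta_0)$-action $\rho\circ c$ inflated along the quotient $\theta\colon\pi_1^\et(X_K,\eta)\twoheadrightarrow\pi^G(X_K/K,\eta_0)$ (here one uses $\eta=\eta_0\otimes_K\bar K$ and the compatibility of $\tilde\eta_0$ with the canonical path). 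Comparing, $\rho\circ c\circ\theta=\rho\circ\mathrm{sp}$ for every such $\rho$, and letting $\rho$ realise all finite quotients of $\pi_1^\et(X_0,\xi)$ forces $c\circ\theta=\mathrm{sp}$, which is the asserted recovery of the classical specialization map.

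The main obstacle is the core of the second paragraph: exploiting strict Henselianity of $R$ and the pointing $y_0$ to manufacture the rational point $\tilde\eta_0$ over $\eta_0$, and then checking that the trivialisation it induces makes $\ev_{\eta_0}\circ\spe^K_\et\cong F_\xi$ an isomorphism of tensor functors — in particular that $\tilde\eta_0$ is the transport of $y_0$ along the canonical path, which is what makes the final comparison $c\circ\theta=\mathrm{sp}$ hold on the nose rather than merely up to conjugacy. Everything else is Tannakian formalism together with the cited structure of the inclusions into $\Fdiv(X_K)$ and the standard behaviour of the classical specialization functor on torsors.
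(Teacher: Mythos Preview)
Your proposal is correct and follows essentially the same approach as the paper: both lift the $R$-point $(\xi,\eta_0)$ to $Y$ using strict Henselianity of $R$ (you phrase this as $\pi_1^\et(\Spec R)=1$, the paper says finite \'etale covers of $\Spec R$ split completely), obtain the $K$-rational point of $Y_K$ over $\eta_0$ that forces the factorisation through $\pi^G(X_K/K,\eta_0)$, and then verify the fiber-functor compatibility via the resulting trivialisation. Your write-up is somewhat more explicit than the paper's in spelling out the Tannakian dualisation and in arguing that $c\circ\theta=\mathrm{sp}$ by testing against all finite quotients, whereas the paper leaves this implicit; conversely the paper gives the canonical path more concretely as the functorial assignment $y_0\mapsto(y_0,y_1)\mapsto\bar y_1$ on fibres, which is the same path you produce by transporting along $s$.
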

\begin{proof} Let's revisit \ref{construction of the specialization map for
    etale fundamental group}. Consider the pointed $G$-torsor $q_0$ corresponding to $\pi_1^\et(X_0,\xi)\to G$
\[
    \begin{tikzpicture}[xscale=2.9,yscale=-1.2]
        \node (A0_0) at (0, 0) {$$};
        \node (A0_1) at (1, 0) {$Y_0$};
        \node (A1_0) at (0, 1) {$\Spec(k)$};
        \node (A1_1) at (1, 1) {$X_0$};

        \draw[>=latex,->] (A1_0) -- node[auto]{$\scriptstyle{\xi}$}
            (A1_1);

        \path (A0_1) edge [->] node[right]
            {$\scriptstyle{q_0}$} (A1_1);
        \path (A1_0) edge [->] node[auto] {$\scriptstyle{y_0}$}
            (A0_1);
    \end{tikzpicture}
\]
Since $R$ is strictly
Henselian, all the finite étale covers split completely. Thus the geometric
point $y_0\in Y_0\subseteq Y$ determines a unique $R$-rational point
$(y_0,y_1)$ of $Y$.
\[
    \begin{tikzpicture}[xscale=2.9,yscale=-1.2]
        \node (A0_0) at (0, 0) {$$};
        \node (A0_1) at (1, 0) {$Y$};
        \node (A1_0) at (0, 1) {$\Spec(R)$};
        \node (A1_1) at (1, 1) {$X$};

        \draw[>=latex,->] (A1_0) --
            node[auto]{$\scriptstyle{(\xi,\eta_0)}$}
            (A1_1);

        \path (A0_1) edge [->] node[right]
            {$\scriptstyle{\mathfrak{q}}$} (A1_1);
        \path (A1_0) edge [dashed,->] node[auto] {$\scriptstyle{(y_0,y_1)}$}
            (A0_1);
    \end{tikzpicture}
\]
So we get the pointed torsor 
\begin{equation} \label{generic fiber}
    \begin{tikzpicture}[xscale=2.9,yscale=-1.2,baseline=(current  bounding  box.center)]
        \node (A0_0) at (0, 0) {$$};
        \node (A0_1) at (1, 0) {$Y_K$};
        \node (A1_0) at (0, 1) {$\Spec(K)$};
        \node (A1_1) at (1, 1) {$X_K$};

        \draw[>=latex,->] (A1_0) -- node[auto]
            {$\scriptstyle{\eta_0}$}
            (A1_1);

        \path (A0_1) edge [->] node[right] {$\scriptstyle{q_K}$} (A1_1);
        \path (A1_0) edge [->] node[auto] {$\scriptstyle{y_1}$}
            (A0_1);
    \end{tikzpicture}
\end{equation}
Since the pointed torsor \eqref{generic fiber} is pointed by a $K$-rational
point, the map $\rho_K\colon\pi_1^\et(X_K,{\eta})\to G$ corresponding to
the it factors as 
\[
\pi_1^\et(X_K,{\eta})\twoheadrightarrow
\pi^G(X_K/K,\eta_0)\longrightarrow G
\]
Continuing the algorithms in \ref{construction of the specialization map for
etale fundamental group} we get  the desired factorization of $\spe^K_\et$.

To check 
\eqref{specialization of neutral Tannakian categories} one first notice that
$q_K^*E_0=\sO_{Y_K}\otimes_KV$. Thus we have
\[
    F_\xi(\rho)=F_{\eta_0}(\rho_K)=V=y_1^*(\sO_{Y_K}\otimes_KV)=y_1^*q_K^*E_0=\eta_0^*E_0=\ev_{\eta_0}((E_i,\sigma_i)_{i\in\N}).
\]
Since all the identifications are functorial, we get \eqref{specialization of
neutral Tannakian categories}.

Finally, the canonical path $\mathfrak{q}^{-1}(\xi)\to \mathfrak{q}^{-1}(\eta)$ is given functorially by
\[
    y_0\longmapsto (y_0,y_1)\longmapsto
    (\bar{y}_1\colon\Spec(\bar{K})\to\Spec(K)\xrightarrow{y_1}Y_K\subseteq
Y)
\]
\end{proof}

\begin{cor}\label{Specialization of the pro-etale fundamental
group when R is strictly Henselian}
Suppose that the residue field
    $k$ of $R$ is separably closed, and
        there is an $R$-rational point $(\xi,\eta_0)\in X(R)$. Then the functor
        $\spe^K_\pet$ induces a specialization map 
        \begin{equation}\label{the specialization map of pro-etale rational
            point}
            \pi_1^\Fdiv(X_K,\eta_0)\arr(\pi_1^\pet(X_0,\xi))_K
\end{equation}
which recovers \eqref{eet specialization - article body}.
\end{cor}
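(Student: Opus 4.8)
The plan is to upgrade \(\spe^K_{\pet}\) to a functor of neutral Tannakian categories
\[
\bigl(\Rep_K^\cts(\pi_1^\pet(X_0,\xi)),\,F_\xi\bigr)\longrightarrow\bigl(\Fdiv(X_K),\,\ev_{\eta_0}\bigr),
\]
where \(F_\xi\colon\Rep_K^\cts(\pi_1^\pet(X_0,\xi))\to\Vect_K\) is the forgetful fibre functor, and then to pass to Tannaka duals. The functor \(\spe^K_{\pet}\) is additive, exact and monoidal, being a composite of pullback along the flat map \(q\), (meromorphic) descent, and the additive monoidal exact equivalence of \ref{existence theorem}; so the crux is to produce a monoidal natural isomorphism \(\ev_{\eta_0}\circ\spe^K_{\pet}\cong F_\xi\). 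Granting this, \(\ev_{\eta_0}\circ\spe^K_{\pet}\) is an exact, faithful, \(K\)-linear, monoidal fibre functor, and Tannakian duality gives a homomorphism of \(K\)-group schemes \(\pi_1^\Fdiv(X_K,\eta_0)=\Aut^{\otimes}(\ev_{\eta_0})\to\Aut^{\otimes}(F_\xi)\); by \nameref{s:notation}~(2) the target is precisely the algebraic hull \((\pi_1^\pet(X_0,\xi))_K\), and this is the map \eqref{the specialization map of pro-etale rational point}.

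To construct the isomorphism \(\ev_{\eta_0}\circ\spe^K_{\pet}\cong F_\xi\), I would mimic the computation in \ref{Specialization of the etale fundamental group when R is strictly Henselian}, working formally. Fix \(\rho\colon\pi_1^\pet(X_0,\xi)\to\GL(V)\) with discrete image \(G\); this gives the pointed \(G\)-torsor \(q_0\colon Y_0\to X_0\) of \cite[Lemma 7.4.6]{BS15} and its thickening \(q\colon\hat Y\to\hat X\), so that \(\spe^K_{\pet}(\rho)=(E_i,\sigma_i)_{i\in\N}\) with \(E_0\) corresponding, via \ref{meromorphic descent for non Noetherian stuff} and \ref{existence theorem}, to \((V\otimes_R\sO_{\hat Y},\{h_w\}_{w\in G})\); in particular, writing \(\hat E_0\in\Coh(\hat X)\otimes_RK\) for the object corresponding to \(E_0\), there is a \(G\)-equivariant meromorphic isomorphism \(q^*\hat E_0\cong V\otimes_R\sO_{\hat Y}\). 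Completing the \(R\)-point \((\xi,\eta_0)\in X(R)\) along \((\pi)\) gives an adic morphism \(\hat\xi\colon\Spf R\to\hat X\) with closed point \(\xi\). Since \(k\) is separably closed and \(q_0\) is pointed over \(\xi\), the fibre of \(q\) over \(\xi\) is the trivial \(G\)-torsor over \(\Spec k\), with a distinguished point \(y_0\); because \(q\) is formally étale and \(\Spf R\) is the increasing union of the nilpotent thickenings \(\Spec(R/\pi^{n+1})\), the point \(y_0\) lifts uniquely, one thickening at a time, to an adic morphism \(\hat y\colon\Spf R\to\hat Y\) with \(q\circ\hat y=\hat\xi\). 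Now \(\ev_{\eta_0}(E_i,\sigma_i)_{i\in\N}=\eta_0^*E_0\), and since the equivalence of \ref{existence theorem} is compatible with base change along \((\xi,\eta_0)\colon\Spec R\to X\) (base change of Grothendieck's existence theorem, followed by passage to generic fibres), \(\eta_0^*E_0\) is the generic fibre of \(\hat\xi^*\hat E_0\in\Coh(\Spf R)\otimes_RK\); pulling the isomorphism \(q^*\hat E_0\cong V\otimes_R\sO_{\hat Y}\) back along the section \(\hat y\) yields \(\hat\xi^*\hat E_0=\hat y^*q^*\hat E_0\cong V\otimes_R\sO_{\Spf R}\) meromorphically, whence \(\eta_0^*E_0\cong V\otimes_RK=F_\xi(\rho)\). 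Taking \(G\) to be a fixed discrete quotient of \(\pi_1^\pet(X_0,\xi)\) through which \(\rho\) factors, as in the last paragraph of \ref{construction of the specialization map for proetale}, makes \(q_0,\hat Y,y_0,\hat y\) functorial in the pointed representation, so this isomorphism is natural in \(\rho\), and monoidality is immediate since every operation involved is monoidal.

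It remains to see that \eqref{the specialization map of pro-etale rational point} recovers \eqref{eet specialization - article body}. By \ref{new construction recovers the old one}, the restriction of \(\spe^K_{\pet}\) to the full subcategory \(\Rep_K^\cts(\pi_1^\et(X_0,\xi))\subseteq\Rep_K^\cts(\pi_1^\pet(X_0,\xi))\) is \(\spe^K_\et\); moreover, when \(G\) is finite the torsor \(q\) is algebraizable and the section \(\hat y\) above is the formal completion of the \(R\)-point of \(Y\) produced in \ref{Specialization of the etale fundamental group when R is strictly Henselian}, so the isomorphism \(\ev_{\eta_0}\circ\spe^K_{\pet}\cong F_\xi\) restricts to the identification used there. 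Passing to Tannaka duals, the composite
\[
\pi_1^\Fdiv(X_K,\eta_0)\longrightarrow(\pi_1^\pet(X_0,\xi))_K\longrightarrow(\pi_1^\et(X_0,\xi))_K=\pi_1^\et(X_0,\xi),
\]
whose second arrow is dual to the inclusion of the finite-monodromy objects, namely the profinite completion \(\pi_1^\pet(X_0,\xi)\to\pi_1^\et(X_0,\xi)\) (\nameref{s:notation}~(3)), agrees with the composite \(\pi_1^\Fdiv(X_K,\eta_0)\twoheadrightarrow\pi^G(X_K/K,\eta_0)\to\pi_1^\et(X_0,\xi)\) of \ref{Specialization of the etale fundamental group when R is strictly Henselian}, which by that proposition recovers \eqref{eet specialization - article body}.

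The one step that needs genuine care is the base-change compatibility used in the second paragraph: the section \(\hat\xi\colon\Spf R\to\hat X\) is neither flat nor a closed immersion, and after pulling back the (possibly infinite) \(G\)-torsor one leaves the Noetherian setting, so one has to verify that meromorphic descent and the equivalence of \ref{existence theorem} really do commute with this base change. This is supplied by the base-change form of Grothendieck's existence theorem together with the functoriality statement \ref{functoriality of meromorphic descent}, applied to the Cartesian square \(\hat Y\times_{\hat X}\Spf R\to\Spf R\) (whose total space is split by \(\hat y\)); everything else is formal Tannakian bookkeeping.
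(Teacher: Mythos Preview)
Your proposal is correct and follows essentially the same route as the paper's own proof: both arguments complete the $R$-point to a formal section $\zeta\colon\Spf R\to\hat X$ (you call it $\hat\xi$), lift it uniquely along the \'etale $G$-torsor $q$ to $\lambda\colon\Spf R\to\hat Y$ using that $R$ is strictly Henselian, and then compute $\zeta^*\sE_0=\lambda^*q^*\sE_0=\lambda^*(V\otimes_R\sO_{\hat Y})=V\otimes_R\sO_{\Spf R}$, concluding via naturality of \ref{existence theorem} that $\eta_0^*E_0=V$; the recovery of \eqref{eet specialization - article body} is, in both cases, the concatenation of \ref{new construction recovers the old one} with \ref{Specialization of the etale fundamental group when R is strictly Henselian}. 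Your closing caveat about base-change compatibility is prudent, but the paper simply invokes naturality of \ref{existence theorem} at that step, and \ref{functoriality of meromorphic descent} covers the descent side, so no extra work is needed.
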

\begin{proof} From \ref{construction of the specialization map for proetale}
    we get a $K$-linear tensorial exact functor $\spe^K_{\pet}$ of Tannakian categories. The $R$-rational
    point $(\xi,\eta_0)$ provides a $K$-rational point $\eta_0$ for $X_K$ and therefore a
    neutral fiber functor $\ev_{\eta_0}$ for $\Fdiv(X_K)$. To get \eqref{the
    specialization map of pro-etale rational point}, we just have to show
    that $\ev_{\eta_0}$ is compatible with the forgetful functor
    $F_\xi$ of $\Rep_K^\cts(\pi_1^\pet(X_0,\xi))$ under $\spe^K_{\pet}$.

    Let's revisit \ref{construction of the specialization map for proetale}.
    The $R$-rational point $(\xi,\eta_0)$ induces a $\Spf(R)$-rational
    point $\zeta$ of $\hat{X}$. Consider the pointed $G$-torsor $q_0$
    corresponding to $\pi_1^\pet(X_0,\xi)\to G$
    \[
    \begin{tikzpicture}[xscale=2.9,yscale=-1.2]
        \node (A0_0) at (0, 0) {$$};
        \node (A0_1) at (1, 0) {$Y_0$};
        \node (A1_0) at (0, 1) {$\Spec(k)$};
        \node (A1_1) at (1, 1) {$X_0$};

        \draw[>=latex,->] (A1_0) -- node[auto]{$\scriptstyle{\xi}$}
            (A1_1);

        \path (A0_1) edge [->] node[right]
            {$\scriptstyle{q_0}$} (A1_1);
        \path (A1_0) edge [->] node[auto] {$\scriptstyle{y_0}$}
            (A0_1);
    \end{tikzpicture}
\]
Since $R$ is strictly Henselian, there is a
    unique (hence functorial) lift $\lambda$ of $\zeta$ along $q$.
    \[
    \begin{tikzpicture}[xscale=2.9,yscale=-1.2]
        \node (A0_0) at (0, 0) {$$};
        \node (A0_1) at (1, 0) {$\hat{Y}$};
        \node (A1_0) at (0, 1) {$\Spf(R)$};
        \node (A1_1) at (1, 1) {$\hat{X}$};

        \draw[>=latex,->] (A1_0) --
            node[auto]{$\scriptstyle{\zeta}$}
            (A1_1);

        \path (A0_1) edge [->] node[right]
            {$\scriptstyle{{q}}$} (A1_1);
        \path (A1_0) edge [dashed, ->] node[above]
            {$\scriptstyle{\lambda}$}
            (A0_1);
    \end{tikzpicture}
\]
Suppose $\sE_0$ is the sheaf in $\Coh(\hat{X})\otimes_RK$ obtained by
applying meromorphic descent \ref{meromorphic descent for non Noetherian
stuff} to $V\otimes_R\sO_{\hat{Y}}$. Since $\sE_0$ corresponds to $E_0$ via
\ref{existence theorem}, $\zeta^*\sE_0\in
\Coh(\Spf(R))\otimes_RK$ corresponds to $\eta_0^*E_0\in \Vect_K$ by naturality
of \ref{existence theorem}.  But
\[\zeta^*\sE_0=\lambda^*(q^*\sE_0)=\lambda^*(V\otimes_R\sO_{\hat{Y}})=V\otimes_R\sO_{\Spf(R)}\]
corresponds to $V$ via \ref{existence theorem}. Thus
$\ev_{\eta_0}((E_i,\sigma_i)_{i\in \N})=\eta_0^*E_0=V$ as
desired.

The last statement is nothing but a combination of \ref{Specialization of the etale fundamental
group when R is strictly Henselian} and \ref{new construction recovers the old
one}.
\end{proof}

\subsubsection{When $X$ does not have an $R$-rational point}
We see that if $X$ admits an $R$-rational point, then $\spe^K_{\pet}$ induces a map of
fundamental group schemes \eqref{the specialization map of pro-etale rational
point} recovers the classical specialization map of fundamental groups
\eqref{eet specialization - article body}. What if $X$ has no $R$-rational point?

Suppose the residue field $k$ of $R$ is separably closed, and $X$
is flat over $R$. By \ref{lift the special point to
R'-point}
there is a finite ring extension $R\to R'$ where $R'$ is a complete \textup{DVR}
such that $X(R')\neq\emptyset$. Let $(\xi,\eta_0)\in X(R')$, and let $k'$
(resp. $K'$) denotes the residue field (resp. function field) of $R'$. Set
$X'\coloneq X\otimes_RR'$. Then $X_0'=X_0\otimes_kk'$ and
$X_K'=X_K\otimes_KK'$. Since
$k'/k$ is a finite purely inseparable extension,  we have 
\[
    \pi_1^\et(X_0,\xi)=\pi_1^\et(X_0\otimes_kk',\xi)\hspace{15pt}\text{and}\hspace{15pt}\pi_1^\pet(X_0,\xi)=\pi_1^\pet(X_0\otimes_kk',\xi).
\]
In this case, we have

\begin{cor}\label{Specialization of the etale fundamental group
    when R' is finite over R} Suppose the residue field
    $k$ of  $R$  is separably closed, and $X$ is flat and
    geometrically connected over $R$. Then a
homomorphism  of $K'$-group
schemes:
\[
    \pi_1^\Fdiv(X_K',\eta_0)\arr(\pi_1^\pet(X_0,\xi))_{K'}
\]
which recovers \eqref{eet specialization - article body} for $X'/R'$.
\end{cor}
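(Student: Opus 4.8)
The plan is to deduce this from Corollary~\ref{Specialization of the pro-etale fundamental group when R is strictly Henselian} applied to the base change $X'/R'$, so that the real work has already been done. Recall from the discussion preceding the statement that, by~\ref{lift the special point to R'-point}, we may choose a finite extension $R\to R'$ of complete \textup{DVR}s together with a point $(\xi,\eta_0)\in X(R')$, and we set $X'\coloneq X\otimes_RR'$, with residue field $k'$ and fraction field $K'$. Note that $X'$ is again a proper scheme over $R'$ and that $R'$ is of equal characteristic $p$.

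First I would check that $(R',K',X',\xi,\eta_0)$ satisfies the hypotheses of~\ref{Specialization of the pro-etale fundamental group when R is strictly Henselian}. The ring $R'$ is a complete \textup{DVR} by construction; its residue field $k'$ is a finite, hence algebraic, extension of the separably closed field $k$, so $k'/k$ is purely inseparable and $k'$ is again separably closed. The pair $(\xi,\eta_0)$ is an $R'$-rational point of $X'$ by construction. Finally, since $X$ is proper, flat and geometrically connected over $R$, the geometric fibers of $X\to\Spec R$ are connected; this is preserved under base change along $\Spec R'\to\Spec R$, so $X'\to\Spec R'$ is proper and flat with geometrically connected fibers over the connected base $\Spec R'$, whence $X'$ is connected, and likewise its generic fiber $X'_{K'}=X_K\otimes_KK'$ is connected and its special fiber $X'_0=X_0\otimes_kk'$ is connected. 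Thus~\ref{Specialization of the pro-etale fundamental group when R is strictly Henselian} applies to $X'/R'$ with the $R'$-point $(\xi,\eta_0)$.

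Second, invoking that corollary verbatim for $X'/R'$ produces a homomorphism of $K'$-group schemes
\[
\pi_1^\Fdiv(X'_{K'},\eta_0)\arr\bigl(\pi_1^\pet(X'_0,\xi)\bigr)_{K'},
\]
induced by $\spe^{K'}_{\pet}$, which recovers the specialization map~\eqref{eet specialization - article body} of étale fundamental groups for $X'/R'$. Since $X'_0=X_0\otimes_kk'$ and $k'/k$ is purely inseparable, the canonical maps $\pi_1^\pet(X_0,\xi)\xrightarrow{\ \sim\ }\pi_1^\pet(X'_0,\xi)$ and $\pi_1^\et(X_0,\xi)\xrightarrow{\ \sim\ }\pi_1^\et(X'_0,\xi)$ recorded before the statement are isomorphisms. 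Rewriting the target of the displayed homomorphism through the first of these identifications gives exactly the asserted map $\pi_1^\Fdiv(X'_{K'},\eta_0)\to(\pi_1^\pet(X_0,\xi))_{K'}$, and, through the étale version, the compatibility with~\eqref{eet specialization - article body} for $X'/R'$ is unchanged.

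There is no serious obstacle here: the entire content is imported from~\ref{Specialization of the pro-etale fundamental group when R is strictly Henselian} and~\ref{lift the special point to R'-point}. The only point that requires a little care, and it is routine, is the connectedness bookkeeping — passing from geometric connectedness of $X$ over $R$ to connectedness of $X'$ and of $X'_{K'}$ (and of $X'_0$), so that the fundamental groups in the source and target are actually defined and the cited corollary is applicable — together with the observation that a finite extension of a separably closed field is purely inseparable, which both keeps $k'$ separably closed and makes the identifications of the two fundamental groups of $X'_0$ with those of $X_0$ available.
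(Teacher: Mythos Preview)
Your proposal is correct and follows exactly the paper's approach: the paper's proof is the single sentence ``We apply \ref{Specialization of the pro-etale fundamental group when R is strictly Henselian} to the $R'$-scheme $X'$,'' and you have simply unpacked the routine verifications (separably closedness of $k'$, connectedness of $X'$, $X'_{K'}$, $X'_0$, and the identification $\pi_1^\pet(X_0,\xi)\simeq\pi_1^\pet(X'_0,\xi)$) that make this application legitimate.
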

\begin{proof} We apply \ref{Specialization of the pro-etale fundamental
group when R is strictly Henselian} to the $R'$-scheme $X'$.
\end{proof}

\begin{lem} \label{can choose a closed point} Let $A$ be a flat finitely
    generated $R$-algebra. Suppose that $A$ is an
    integral domain, and $P$ is a maximal ideal of $A$
    containing the uniformizer $\pi$ of $R$. If
    $A_P[\frac{1}{\pi}]$ is a field, then $A[\frac{1}{\pi}]$ is
    also a field.
\end{lem}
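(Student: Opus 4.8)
The plan is to reduce the statement to the single assertion that $\operatorname{ht}_A(P)=1$, and then read off that $A[\tfrac1\pi]$ is a field from the dimension formula. First I would set $B:=A[\tfrac1\pi]=A\otimes_RK$; since $A$ is a finitely generated $R$-algebra, $B$ is a finitely generated $K$-algebra, and it is an integral domain because $A$ is. As an integral domain which is finitely generated over a field is a field precisely when it has Krull dimension $0$, it suffices to prove $\dim B=0$. Now, since $A$ is $R$-flat and nonzero, $R\hookrightarrow A$, so $\pi\neq 0$ in $A$; because $\pi\in P$ with $P$ maximal, $P\cap R=(\pi)$, which has height $1$ in the \textup{DVR} $R$, and the residue field $\kappa(P)=A/P$ is a quotient of the finitely generated $k$-algebra $A/\pi A$, hence finitely generated over $k$, hence finite over $k$ by Zariski's lemma. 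Since $R$ is a \textup{DVR} it is universally catenary, so the dimension formula for the finitely generated domain $A$ over $R$ at the prime $P$ yields
\[
  \operatorname{ht}_A(P)=\operatorname{ht}_R(P\cap R)+\operatorname{trdeg}_K\operatorname{Frac}(A)-\operatorname{trdeg}_k\kappa(P)=1+\dim B ,
\]
so it is enough to show $\operatorname{ht}_A(P)=1$.

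Since $\pi\in P$ and $\pi\neq 0$, certainly $\operatorname{ht}_A(P)\ge 1$, so the point is to rule out $\operatorname{ht}_A(P)\ge 2$. Suppose $\operatorname{ht}_A(P)=\dim A_P\ge 2$. Then $A_P$ is a Noetherian local domain of dimension at least $2$, and I would invoke the standard fact that such a ring has infinitely many height-one primes: if $\mathfrak q_1,\dots,\mathfrak q_r$ were all of them, then, the maximal ideal $PA_P$ having height $\ge 2$, prime avoidance produces $x\neq 0$ in $PA_P$ lying in none of the $\mathfrak q_i$; by Krull's principal ideal theorem every minimal prime over $(x)$ has height $\le 1$, hence (as $x\neq0$) height exactly $1$, hence equals some $\mathfrak q_i$, forcing $x\in\mathfrak q_i$ — a contradiction. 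Since $\pi$ is a nonzero element of $A_P$, it lies in only finitely many height-one primes (the minimal primes over $(\pi)$), so there is a height-one prime $\mathfrak q\subset A_P$ with $\pi\notin\mathfrak q$. But then $\mathfrak q\cdot A_P[\tfrac1\pi]$ is a nonzero proper prime ideal of $A_P[\tfrac1\pi]$, contradicting the hypothesis that $A_P[\tfrac1\pi]$ is a field. Hence $\operatorname{ht}_A(P)=1$, so $\dim B=0$ and $B=A[\tfrac1\pi]$ is a field.

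The genuinely load-bearing step is the bound $\operatorname{ht}_A(P)\le 1$, i.e.\ the observation that below a height-$\ge 2$ prime of a Noetherian domain one can always find a height-one prime ``transverse'' to a prescribed nonzero element; everything else is bookkeeping around the dimension formula, whose hypotheses ($R$ universally catenary, $\kappa(P)/k$ finite) are automatic here. I expect the only thing requiring care is to make sure the dimension formula is applied to $A$ itself and that the transcendence-degree terms are identified correctly ($\operatorname{trdeg}_K\operatorname{Frac}(A)=\dim B$, $\operatorname{trdeg}_k\kappa(P)=0$); no non-Noetherian or infinite-dimensional subtleties intervene, since $A$ is Noetherian throughout.
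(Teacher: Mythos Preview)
Your proof is correct and follows essentially the same approach as the paper: both reduce the claim to $\operatorname{ht}_A(P)=1$ via the dimension formula (using that $R$ is universally catenary and $\kappa(P)/k$ is finite), and both establish this height bound by combining Krull's principal ideal theorem with prime avoidance in $A_P$. The only cosmetic difference is packaging: the paper argues directly that $PA_P$ must be a minimal prime of $(\pi)$, whereas you phrase the same computation as ``a Noetherian local domain of dimension $\ge 2$ has infinitely many height-one primes, hence one missing $\pi$''; the underlying contradiction is identical.
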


\begin{proof} The conditions imply that if $Q\subseteq P$ is a
    non-zero prime ideal of $A$, then $\pi\in Q$. Consider the
    minimal prime ideals of the ideal $(\pi)$ in $A_P$. If
    $PA_P$ was not one of them, then there would be $a\in PA_P$
    such that $a$ is not contained in any of those minimal prime
    ideals of $(\pi)$. By Krull principal ideal theorem each
    minimal prime ideal $I$ of $(a)$ has height 1, but since it is
    non-zero, $\pi\in I$. Thus $I$ has to be also a minimal
    prime ideal of $(\pi)$ - a contradiction! So $PA_P$ is a
    minimal prime ideal of $(\pi)$. This implies that
    $\height(P)=1$, so by \cite[14.C, Theorem 23, p.~84]{Mat-CA}
    and the fact that $R$ is universally catenary, 
    the function field of
    $A[\frac{1}{\pi}]$ is finite over $K$, hence $A[\frac{1}{\pi}]$
    a field.
\end{proof}

\begin{lem}\label{lift the special point to R'-point}
    If  $X$ is flat and locally of finite type over a complete \textup{DVR} $R$, then for any closed point
    $\xi\in X_0$ we
    can find a complete \textup{DVR} $R'$ which is finite flat over $R$ and
    an
    $R$-map $\Spec(R')\to X$ whose special point goes to $\xi$.
\end{lem}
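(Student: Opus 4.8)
The plan is to reduce to an affine integral situation over $R$, cut the base down to dimension one by taking hyperplane sections through $\xi$, apply Lemma~\ref{can choose a closed point}, and then normalize and pass to a complete \textup{DVR}.

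\emph{Step 1 (reduction to an affine domain).} First I would replace $X$ by an affine open neighbourhood $\Spec(A)$ of $\xi$ with $A$ of finite type over $R$ — possible since $X$ is locally of finite type — and note that $A$ is $R$-flat, so $\pi$ is a nonzerodivisor on $A$ and hence lies in no minimal prime of $A$. The point $\xi$ corresponds to a maximal ideal $P\subseteq A$ with $\pi\in P$. Choosing a minimal prime $\mathfrak q\subseteq P$ of $A$ and replacing $A$ by $A/\mathfrak q$, I may assume $A$ is an integral domain; it is still a finitely generated, $R$-flat $R$-algebra, and $P$ is still maximal with $\pi\in P$. Since $\pi\neq 0$ in the domain $A$, we have $PA_P\neq 0$, i.e.\ $\height_A P\geq 1$.

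\emph{Step 2 (cutting down to $\height_A P=1$).} Whenever $n:=\height_A P\geq 2$ I would pass to a hyperplane section through $\xi$. Every minimal prime of $\pi A$ has height $1$ by Krull's principal ideal theorem, while $\height_A P=n\geq 2$, so prime avoidance produces $f\in P$ contained in no minimal prime of $\pi A$; in particular $f\neq 0$. Pick a minimal prime $\mathfrak q'$ of $(f)$ with $\mathfrak q'\subseteq P$; it has height $1$, and it cannot contain $\pi$, for otherwise it would contain — hence, by the height count, equal — a minimal prime of $\pi A$, contradicting $f\in\mathfrak q'$. Thus $A/\mathfrak q'$ is a finitely generated $R$-domain that is $\pi$-torsion free (so $R$-flat), the image of $P$ is maximal and contains $\pi$, and its height has dropped to $n-1$ because $A_P$ is a catenary local domain (indeed $A$ is excellent, being of finite type over the complete, hence excellent, ring $R$). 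Iterating, I reduce to the case $\height_A P=1$. This is the delicate step: one must choose the section so as to simultaneously preserve integrality, preserve $R$-flatness, and strictly decrease $\height_A P$, which is exactly the prime-avoidance/Krull/catenariness bookkeeping just described; everything that follows is routine.

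\emph{Step 3 (applying Lemma~\ref{can choose a closed point} and normalizing).} When $\height_A P=1$, the unique nonzero prime of $A_P$ is $PA_P$, so $(0)$ is the only prime of $A_P$ not containing $\pi$; hence $A_P[\tfrac{1}{\pi}]$ is a zero-dimensional domain, that is, a field. By Lemma~\ref{can choose a closed point}, $L:=A[\tfrac{1}{\pi}]=\mathrm{Frac}(A)$ is then a field, and it is finite over $K$ since it is a finitely generated $K$-algebra which is a field. Let $\tilde A$ be the normalization of $A$ in $L$; it is module-finite over $A$ because $R$, being complete, is a Nagata ring. Choose a maximal ideal $\tilde P$ of $\tilde A$ lying over $P$. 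Contracting chains of primes gives $\height_{\tilde A}\tilde P\leq\height_A P=1$, while $\tilde P\ni\pi\neq 0$ forces equality, so $R':=\tilde A_{\tilde P}$ is a normal local domain of dimension $1$, i.e.\ a \textup{DVR}, with fraction field $L$. Finally, since $R$ is complete and $L/K$ is finite, the integral closure $\mathcal O_L$ of $R$ in $L$ is a complete \textup{DVR} that is finite flat over $R$; as $R'$ is normal with $\mathrm{Frac}(R')=L$ we have $\mathcal O_L\subseteq R'\subsetneq L$, and the only subrings of $L$ containing the \textup{DVR} $\mathcal O_L$ are $\mathcal O_L$ and $L$, so $R'=\mathcal O_L$. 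Therefore $R'$ is a complete \textup{DVR}, finite and flat over $R$, and the composite $\Spec(R')=\Spec(\tilde A_{\tilde P})\to\Spec(\tilde A)\to\Spec(A)\hookrightarrow X$ sends the closed point of $\Spec(R')$ to $\tilde P$, which lies over $P=\xi$. This is the required map.
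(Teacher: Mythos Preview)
Your proof is correct, and it takes a genuinely different route from the paper's.

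The paper, after passing to an affine open $\Spec(A)$ with $\xi\leftrightarrow P$, does not cut the dimension down. Instead it picks a prime $J\subseteq A[\tfrac{1}{\pi}]$ whose image in $A_P[\tfrac{1}{\pi}]$ is maximal, and applies Lemma~\ref{can choose a closed point} (implicitly to the domain $A/(J\cap A)$) to conclude that $J$ is already maximal in $A[\tfrac{1}{\pi}]$. This produces a closed point $\eta\in X_K$ with $\kappa(\eta)/K$ finite that specializes to $\xi$, and the DVR is then pulled off the shelf via \cite[\href{https://stacks.math.columbia.edu/tag/054F}{054F}]{stacks-project}.

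You instead reduce to a one-dimensional flat $R$-domain through $\xi$ by an iterated prime-avoidance/hyperplane-section argument, invoke Lemma~\ref{can choose a closed point} only in the extremal case $\height_AP=1$ to see that $A[\tfrac{1}{\pi}]$ is a field finite over $K$, and then build $R'$ explicitly as a localization of the normalization, identifying it with $\mathcal O_L$. Your argument is longer but entirely self-contained: it avoids the appeal to \texttt{054F} and makes the DVR concrete. The paper's argument is shorter and more conceptual (closed point on the generic fiber plus valuative criterion), at the cost of citing an external existence result. Both proofs ultimately hinge on Lemma~\ref{can choose a closed point}, but use it in complementary ways: the paper to promote a locally closed point of $X_K$ to a globally closed one, you to recognize that the generic fiber of your one-dimensional slice is a single point.
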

\begin{proof} Replacing $X$ by an affine open neighborhood of $\xi$, we may assume
    that $X=\Spec(A)$ is affine and $\xi$ corresponds to a maximal ideal
    $P\subseteq A$. By going down,
    we can find a prime ideal $I\subseteq A$ such that $I\subseteq P$ and
    $I\bigcap R=\{0\}$. Thus the ring $A_P[\frac{1}{\pi}]$ is non-zero. Let $J\subseteq A[\frac{1}{\pi}]$ be a prime ideal
    such that $J_P\subseteq A_P[\frac{1}{\pi}]$ is maximal. By \ref{can choose
    a closed point}, $J$ is a maximal ideal of $A[\frac{1}{\pi}]$. Let
    $\eta\in X_K=\Spec(A[\frac{1}{\pi}])$ be the closed point
    corresponding to $J$, then the residue field
    $\kappa(\eta)$ is a finite extension of $K$. Note also that $\eta$
    specializes to $\xi\in X$.  By \cite[\href{https://stacks.math.columbia.edu/tag/054F}{054F}]{stacks-project} there exists a
    DVR $R'$ and a map $\Spec(R')\to X$, whose special point goes
    to $\xi$ and whose generic point goes to $\eta$. Moreover,
    $R'$ can be chosen in such a way that its function field is equal to
    $\kappa(\eta)$. Thus $R\to R'$ is a finite ring
extension
(cf.~\cite[\href{https://stacks.math.columbia.edu/tag/0335}{0335}]{stacks-project},
\cite[\href{https://stacks.math.columbia.edu/tag/03GH}{03GH}]{stacks-project}).\end{proof}

\bigskip

\section{The Pro-étale fundamental group} 

In this section, we generalize E. Lavanda's
theorem \cite[Theorem 1.17]{Elena18} to more general schemes (see
Theorem \ref{structure theorem when the singular
locus is connected}, Theorem \ref{structure theorem when singular locus
is disconnected} and Theorem \ref{curve case}).  The
core techniques we use here are similar to those in E. Lavanda's proof: (1)
the technique of proper descent of étale morphisms
\cite[Proposition 1.16]{Elena18} due to A. Grothendieck and  D. Rydh; (2)
a combinatorial method which turns out to be a folklore (see e.g.
\cite[8.4.1, p.~333]{Br06}). 

\subsection{Results about Noohi groups}

\begin{lem} \label{Small facts about Noohi groups} A Noohi group $\pi$ is Hausdorff and has a basis of
    open neighborhoods of $e\in \pi$ given by subgroups. 
\end{lem}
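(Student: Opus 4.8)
The plan is to argue directly from the defining property of a Noohi group: the natural continuous homomorphism $\pi\arr\Aut(F_\pi)$ is an isomorphism of topological groups, where $F_\pi\colon\pi\text{-}\mathrm{Set}\to\mathrm{Set}$ is the forgetful functor on the category of discrete sets with continuous $\pi$-action and $\Aut(F_\pi)$ carries the topology of pointwise convergence, i.e.\ the subspace topology from the product $\prod_{(S,s)}S$ over all pairs $(S,s)$ with $S$ an object of $\pi\text{-}\mathrm{Set}$ and $s\in S$ (each $S$ discrete), an automorphism $\eta$ of $F_\pi$ having $(S,s)$-coordinate $\eta_S(s)$. It therefore suffices to prove both assertions for $G:=\Aut(F_\pi)$ with this topology.

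For each pair $(S,s)$ I would put $U_{(S,s)}:=\{\eta\in G:\eta_S(s)=s\}$. This is a subbasic open subset of $G$ containing $e$, and it is a subgroup, since $\eta_S(s)=s$ and $\eta'_S(s)=s$ force $(\eta\circ\eta')_S(s)=\eta_S(\eta'_S(s))=\eta_S(s)=s$ and likewise $(\eta^{-1})_S(s)=s$. The key claim is that the $U_{(S,s)}$ form a neighbourhood basis of $e$. To see this, observe that a basic open neighbourhood of $e$ in the ambient product meets $G$ in a finite intersection $\bigcap_{j=1}^m\{\eta:\eta_{S_j}(s_j)=s_j\}$; putting $S:=S_1\times\dots\times S_m$ with the diagonal $\pi$-action — again an object of $\pi\text{-}\mathrm{Set}$, because the stabiliser of $(s_1,\dots,s_m)$ is the open subgroup $\bigcap_j\mathrm{Stab}_\pi(s_j)$ — and $s:=(s_1,\dots,s_m)$, naturality of $\eta$ with respect to the projections $S\to S_j$ yields $U_{(S,s)}\subseteq\bigcap_j\{\eta:\eta_{S_j}(s_j)=s_j\}$. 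Hence $\{U_{(S,s)}\}$ is a neighbourhood basis of $e$ consisting of open subgroups; transporting it along $\pi\xrightarrow{\ \sim\ }G$ proves the second assertion.

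For the first assertion I would observe that $\bigcap_{(S,s)}U_{(S,s)}=\{e\}$: an $\eta$ lying in all of these satisfies $\eta_S(s)=s$ for every pointed object of $\pi\text{-}\mathrm{Set}$, so $\eta_S=\mathrm{id}_S$ for every $S$, i.e.\ $\eta$ is the identity. Since in a topological group the closure of $\{e\}$ equals the intersection of all open neighbourhoods of $e$, this shows $\{e\}$ is closed in $G$, and a topological group with $\{e\}$ closed is Hausdorff; thus $G$, and hence $\pi$, is Hausdorff.

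I do not expect any serious obstacle here: the whole argument is an unwinding of definitions, and the only point needing a little care is the bookkeeping in the second paragraph identifying the topology of $\Aut(F_\pi)$ with the group topology generated by the stabiliser subgroups $U_{(S,s)}$. (If one instead adopts the equivalent characterisation of Noohi groups as the Raikov-complete topological groups that admit a neighbourhood basis of $e$ by open subgroups, then both assertions are immediate, Hausdorffness being part of Raikov-completeness.)
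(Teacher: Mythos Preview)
Your proof is correct and follows essentially the same strategy as the paper: both exploit the defining isomorphism $\pi\xrightarrow{\sim}\Aut(F_\pi)$ and read off the topology from an embedding into a product. The only difference is bookkeeping: the paper embeds $\Aut(F_\pi)$ into $\prod_{U}\Aut(\pi/U)$ over the open subgroups $U\subseteq\pi$ and invokes the fact that each factor $\Aut(\pi/U)$ (a permutation group of a discrete set with the compact-open topology) is Hausdorff with a subgroup basis at the identity, whereas you embed directly into $\prod_{(S,s)}S$ and exhibit the stabilisers $U_{(S,s)}$ explicitly. Your version is a bit more self-contained, since the paper's assertion that $\Aut(F_\pi)\hookrightarrow\prod_U\Aut(\pi/U)$ is a topological embedding implicitly uses the same reduction (every object of $\pi\text{-}\mathrm{Set}$ decomposes into orbits $\pi/U$) that you carry out by hand with the product trick.
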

\begin{proof} Let \(\pi\)-\((\Sets)\) be the category of discrete sets
  with \(\pi\)-actions, and let $F_\pi\colon
    \pi\mbox{-}(\Sets)\to(\Sets)$ be the forgetful functor. Since $\pi$ is
    Noohi, the continuous map $\pi\to\Aut(F_\pi)$ is an
    isomorphism. Let $T$ denote the set of open subgroups of
    $\pi$. Then for each $U\subseteq \pi$ in $T$ the coset
    $\pi/U$ is a discrete set equipped with a continuous
    $\pi$-action. Let $\Aut(\pi/U)$ be the automorphism group (with the compact-open topology) of
    $\pi/U$ . Then
    $\Aut(F_\pi)$ is a subgroup of $\prod_{U\in T}\Aut(\pi/U)$
    equipped with the subspace topology. Since each
    $\Aut(\pi/U)$ is Hausdorff and has a basis of open
    neighborhoods of the unit given by subgroups, so is the product $\prod_{U\in T}\Aut(\pi/U)$ and its
    topological subgroup $\Aut(F_\pi)\cong\pi$.
\end{proof}

\begin{lem}\label{faithfulness of Tannakian}
Let $f,g\colon G\to \pi$ be two continuous maps of topological
groups, where $\pi$ is Hausdorff and has a basis of open
neighborhoods of $e\in\pi$. If the induced functors $f^*,g^*:\pi\mbox{-}(\Sets)\to G\mbox{-}(\Sets)$
coincide, then $f=g$.  In other words,
\(\Hom(G,\pi)\to\mathop{\rm Fun}(\pi\mbox{-}(\Sets),G\mbox{-}(\Sets))\) is injective.
\end{lem}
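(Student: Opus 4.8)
The plan is to recover each map $f,g\colon G\to\pi$ from the functor it induces on $\pi$-sets, using the fact that $\pi$ acts on its own coset spaces $\pi/U$ for $U$ ranging over open subgroups, together with the Hausdorff/small-subgroup hypothesis on $\pi$. First I would fix an open subgroup $U\le\pi$ and consider the discrete $\pi$-set $\pi/U$. Applying $f^*$ turns $\pi/U$ into the $G$-set with underlying set $\pi/U$ and action $\gamma\cdot(xU)=f(\gamma)xU$, and similarly $g^*(\pi/U)$ has action via $g$. The hypothesis $f^*=g^*$ forces these two $G$-actions on the same underlying set to be literally equal, so for every $\gamma\in G$ and every $x\in\pi$ we get $f(\gamma)xU=g(\gamma)xU$, i.e. $g(\gamma)^{-1}f(\gamma)\in xUx^{-1}$.

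Next I would exploit the freedom in $x$ and $U$. Taking $x=e$ gives $g(\gamma)^{-1}f(\gamma)\in U$ for every open subgroup $U$; since $\pi$ has a basis of open neighborhoods of $e$ consisting of (open) subgroups, the intersection of all open subgroups is contained in $\bigcap$ of a neighborhood basis of $e$, which by the Hausdorff property of $\pi$ is $\{e\}$. Hence $g(\gamma)^{-1}f(\gamma)=e$, that is $f(\gamma)=g(\gamma)$, for all $\gamma\in G$, so $f=g$. The ``in other words'' clause is then immediate: if $f\ne g$ then $f^*\ne g^*$, so the assignment $f\mapsto f^*$ from $\Hom(G,\pi)$ to functors $\pi\text{-}(\Sets)\to G\text{-}(\Sets)$ is injective.

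I should double-check one subtlety: the functor $f^*$ a priori only knows the $G$-set $\pi/U$ up to the data the functor records, but since $f^*$ is defined on objects by keeping the same underlying set and precomposing the action map with $f$, the equality of functors $f^*=g^*$ includes equality on this particular object $\pi/U$ as an actual $G$-set (same underlying set, same action), which is exactly what the argument uses; there is no need to chase morphisms. The only place the hypotheses on $\pi$ enter is the final step, separating points of $\pi$ by open subgroups, and this is precisely guaranteed by ``$\pi$ is Hausdorff and has a basis of open neighborhoods of $e$'' (note the statement as given does not even require the basis to consist of subgroups for the neighborhood-intersection argument, though in the intended application, via Lemma \ref{Small facts about Noohi groups}, it does).

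The main (and only real) obstacle is conceptual rather than technical: one must be careful that $f^*$ and $g^*$ agreeing as functors genuinely pins down the action on each $\pi/U$, and then that the family $\{\pi/U\}$ is rich enough to separate elements of $\pi$; both are handled by the observations above, so I expect the proof to be short.
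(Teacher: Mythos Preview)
Your proof is correct and follows essentially the same approach as the paper: both arguments test $f$ and $g$ on the $\pi$-sets $\pi/U$ for open subgroups $U$, deduce $g(\gamma)^{-1}f(\gamma)\in U$ for all such $U$, and then use that the intersection of all open subgroups is $\{e\}$ (from Hausdorff plus a basis of open subgroups at $e$). Your write-up is a bit more explicit than the paper's, but the content is the same; the only quibble is your final parenthetical: the statement as printed is missing the words ``given by subgroups'' (it is inherited from Lemma~\ref{Small facts about Noohi groups}), and that hypothesis \emph{is} needed---without it one cannot conclude $\bigcap_U U=\{e\}$ (think of $\pi=\mathbb{R}$), so your side remark that the subgroup condition is not required for the intersection step is not quite right.
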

\begin{proof}
Since $\pi$ is Hausdorff and has a basis of open
neighborhoods of $e\in\pi$, the intersection of all open
subgroups of $\pi$ is $e$. Therefore, it is enough to show that
$f$ is equal to $g$ after composing with $\pi\to\pi/U$ for all
open subgroups $U\subseteq \pi$. But $\pi/U\in\pi\mbox{-}(\Sets)$, so
its restrictions to $G\mbox{-}(\Sets)$ via $f$ and $g$ must coincide. This
implies exactly that $\pi\to\pi/U$ equalizes $f$ and $g$.
\end{proof}

The following is a generalization of \cite[Lemma 2.51, p.~18]{Lara19} to not necessarily Hausdorff topological groups. 

\begin{lem}
    The natural inclusion 
    \[
        (\textup{Noohi groups}) \hspace{5pt}
        \subseteq\hspace{5pt} (\textup{Topological groups})
    \]
    has a left adjoint, which we will denote by $(-)^\Noohi$.
\end{lem}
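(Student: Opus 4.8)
The plan is to construct $(-)^{\mathrm{Noohi}}$ explicitly as a completion. Given a topological group $G$, the natural candidate is to take the category $G\text{-}(\Sets)$ of discrete sets with continuous $G$-action, equipped with its forgetful functor $F_G$, and set $G^{\mathrm{Noohi}} := \Aut(F_G)$ with the compact-open topology. This is a Noohi group essentially by construction: one checks that the canonical map $\Aut(F_G) \to \Aut(F_{\Aut(F_G)})$ is an isomorphism, using that the category of discrete continuous $G^{\mathrm{Noohi}}$-sets coincides (via restriction along $G \to G^{\mathrm{Noohi}}$) with $G\text{-}(\Sets)$; this last point is where the argument mirrors \cite[Lemma 2.51]{Lara19}, and the only thing to be added for the non-Hausdorff case is that $G\text{-}(\Sets)$ depends on $G$ only through its maximal Hausdorff quotient $G/\overline{\{e\}}$, since any continuous action on a discrete set factors through it.

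Next I would produce the unit morphism $\eta_G \colon G \to G^{\mathrm{Noohi}}$. This is simply the tautological action map sending $g \in G$ to the automorphism of $F_G$ it induces; it is continuous for the compact-open topology because the $G$-actions in play are continuous. The remaining task is the universal property: for any Noohi group $\pi$ and any continuous homomorphism $f \colon G \to \pi$, there is a unique continuous $\tilde f \colon G^{\mathrm{Noohi}} \to \pi$ with $\tilde f \circ \eta_G = f$. For existence, $f$ induces a restriction functor $f^* \colon \pi\text{-}(\Sets) \to G\text{-}(\Sets)$ compatible with forgetful functors, hence a continuous map $\Aut(F_G) \to \Aut(F_\pi) = \pi$ (the last equality because $\pi$ is Noohi); call it $\tilde f$, and the compatibility $\tilde f \circ \eta_G = f$ is immediate on the level of actions. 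For uniqueness, suppose $\tilde f_1, \tilde f_2 \colon G^{\mathrm{Noohi}} \to \pi$ both restrict to $f$. By Lemma~\ref{Small facts about Noohi groups} the Noohi group $\pi$ is Hausdorff with a basis of open subgroups, so Lemma~\ref{faithfulness of Tannakian} reduces us to showing $\tilde f_1$ and $\tilde f_2$ induce the same functor $\pi\text{-}(\Sets) \to G^{\mathrm{Noohi}}\text{-}(\Sets)$; but a discrete continuous $G^{\mathrm{Noohi}}$-set is the same datum as a discrete continuous $G$-set (via $\eta_G$), and on those the two pullbacks agree because $\tilde f_1 \circ \eta_G = f = \tilde f_2 \circ \eta_G$.

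The step I expect to be the main obstacle is verifying that $G^{\mathrm{Noohi}} = \Aut(F_G)$ is genuinely a \emph{Noohi} group, i.e.\ that passing to automorphisms of the forgetful functor is idempotent. Concretely one must show: (i) every discrete set with a continuous $G^{\mathrm{Noohi}}$-action, when restricted along $\eta_G$, is a discrete continuous $G$-set, and conversely every discrete continuous $G$-set carries a canonical continuous $G^{\mathrm{Noohi}}$-action inducing it — so that the two categories are literally identified over $(\Sets)$; and (ii) this identification is compatible with forgetful functors, whence $\Aut(F_{G^{\mathrm{Noohi}}}) = \Aut(F_G) = G^{\mathrm{Noohi}}$. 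Point (i) in the forward direction requires checking that for a continuous $G^{\mathrm{Noohi}}$-set $S$ and $s \in S$, the stabilizer of $s$ in $G$ is open, which follows from openness of the stabilizer in $G^{\mathrm{Noohi}}$ together with continuity of $\eta_G$; the converse direction is the substantive claim and is exactly the content of the Noohi formalism as recalled in \cite[\S7.1]{BS15} and \cite{Lara19}, now applied without the Hausdorff hypothesis. Once this is in place, everything else is formal, and the construction $G \mapsto G^{\mathrm{Noohi}}$ together with $\eta_G$ exhibits the desired left adjoint.
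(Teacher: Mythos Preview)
Your proposal is correct and follows essentially the same route as the paper: define $G^{\mathrm{Noohi}}=\Aut(F_G)$, use the identification of $G$-$(\Sets)$ with $G^{\mathrm{Noohi}}$-$(\Sets)$, and deduce uniqueness from Lemma~\ref{faithfulness of Tannakian}. The paper shortcuts your ``main obstacle'' by observing that $G$-$(\Sets)$ is a tame infinite Galois category and then invoking \cite[Theorem~7.2.5]{BS15} directly for the Noohi property and the correspondence between functors and continuous homomorphisms, rather than unpacking the idempotency argument by hand.
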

\begin{proof}
    Given $G\in\textup{(Topological groups)}$ the category
    $G\mbox{-}(\Sets)$ is a tame
    infinite Galois category. Let $F\colon G\mbox{-}(\Sets)\to(\Sets)$
    be the forgetful functor, then $\Aut(F)$ equipped with the
    compact-open topology is a Noohi group by \cite[Theorem
    7.2.5.1]{BS15}. By \cite[Theorem
    7.2.5.3]{BS15} the
    continuous map $\alpha\colon G\to\Aut(F)$ identifies
    $\Aut(F)\mbox{-}(\Sets)$ and $G\mbox{-}(\Sets)$. We put
    \(G^{\Noohi}\coloneqq\Aut(F)\).  If $ \gamma\colon G\to G'$ is a
    continuous homomorphism, where $G'$ is Noohi, then by
    \cite[Theorem 7.2.5.2]{BS15} the
    functor
    $G'\mbox{-}(\Sets)\to G\mbox{-}(\Sets)$ corresponds to a unique map
    $\beta\colon \Aut(F)\to
    G'$. Thus by \ref{faithfulness of Tannakian} we have
    $\beta\circ\alpha=\gamma$. This concludes the proof.
\end{proof}

\begin{defn} \label{Noohi quotient}
Let $\pi$ be a Noohi group, and let $H\subseteq \pi$ be a
subgroup. Denote $\langle H\rangle$ the smallest normal subgroup
of $\pi$ containing $H$. We define the \textit{Noohi quotient of
  \(\pi\) by \(H\)} 
to be
$(\pi/\langle H\rangle)^\Noohi$. Let $\langle H\rangle^c$ be the
closure of $\langle H\rangle$ in $\pi$. By \cite[Proposition 7.1.5,
p.~188]{BS15} and \ref{Small facts about Noohi groups},
$(\pi/\langle H\rangle)^\Noohi$ is the Raĭkov
completion of $\pi/\langle H\rangle^c$.

If \(\{f_i,g_i\}_{i\in I}\) are elements of \(\pi\), we often form the
Noohi quotient \((\pi/\langle H\rangle)^\Noohi\) with \(H\) being the
subgroup generated by \(\{f_i^{-1}g_i\}_{i\in I}\).  We refer to this
as the {\it Noohi quotient of \(\pi\) by the relations
  \(\{f_i=g_i\}_{i\in I}\).}
\end{defn}

\begin{lem}\label{coproduct exists in Noohi groups} Finite coproducts
  exist in the category of Noohi groups.
\end{lem}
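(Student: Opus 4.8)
The plan is to build the coproduct of two Noohi groups $\pi_1, \pi_2$ by first forming the coproduct $G := \pi_1 * \pi_2$ in the category of \emph{topological} groups (equipped with the topology whose open subgroups are those pulled back from open subgroups of $\pi_1$ and $\pi_2$ in a suitable sense), and then applying the Noohi-ification functor $(-)^{\Noohi}$ constructed above. The claim is that $(\pi_1 * \pi_2)^{\Noohi}$ together with the evident maps $\pi_i \to \pi_1 * \pi_2 \to (\pi_1 * \pi_2)^{\Noohi}$ is the coproduct in Noohi groups. Given a Noohi group $\pi'$ and continuous homomorphisms $\pi_i \to \pi'$, the universal property of the topological-group coproduct gives a unique continuous $\pi_1 * \pi_2 \to \pi'$, and then the adjunction from the previous lemma ($(-)^{\Noohi}$ is left adjoint to the inclusion) gives a unique continuous $(\pi_1 * \pi_2)^{\Noohi} \to \pi'$ through which the $\pi_i$ factor. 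Uniqueness at the level of Noohi groups follows from Lemma~\ref{faithfulness of Tannakian}, since any two such factorizations agree after composing with all $\pi' \to \pi'/U$. Finite coproducts then follow by induction, the empty coproduct being the trivial group.

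The first thing I would do is make precise what topology to put on the abstract free product $\pi_1 * \pi_2$. The natural choice is to declare a subgroup $U \subseteq \pi_1 * \pi_2$ open if it contains a subgroup of the form $\langle \langle U_1 \cup U_2 \rangle \rangle$ (normal closure) — or, more carefully, to take as a basis of neighborhoods of $e$ the intersections $N_1 \cap N_2$ where $N_i$ is the normal closure in $\pi_1 * \pi_2$ of an open subgroup of $\pi_i$; one checks this generates a group topology and that the inclusions $\pi_i \hookrightarrow \pi_1 * \pi_2$ are continuous, while any pair of continuous maps $\pi_i \to H$ into a topological group $H$ induces a continuous map out of $\pi_1 * \pi_2$. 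This is essentially a diagram chase with subgroup bases; it is routine but needs to be spelled out, because $\pi_1, \pi_2$ are not assumed Hausdorff and one must track that the resulting topology still does the right thing after Noohi-ification.

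The main obstacle I anticipate is \emph{not} the categorical formalism — which is forced once $(-)^{\Noohi}$ exists — but verifying that $(\pi_1 * \pi_2)^{\Noohi}$ genuinely receives maps from $\pi_1$ and $\pi_2$ that are ``jointly non-degenerate'' enough, i.e. that the category $(\pi_1 * \pi_2)$-$(\Sets)$ (hence $(\pi_1 * \pi_2)^{\Noohi}$-$(\Sets)$) is rich enough to detect the individual $\pi_i$. Concretely, one wants to know that the composite $\pi_i \to \pi_1 * \pi_2 \to (\pi_1 * \pi_2)^{\Noohi}$ is the ``correct'' map, and here one uses that $(\pi_1 * \pi_2)$-$(\Sets)$ restricts along $\pi_i$ to $\pi_i$-$(\Sets)$ (because every open subgroup of $\pi_i$ arises as the preimage of one of the basic open subgroups above), together with \cite[Theorem 7.2.5]{BS15} which guarantees $(\pi_1 * \pi_2)$-$(\Sets)$ is a tame infinite Galois category so that $\Aut(F)$ is Noohi and $F$-equivariantly faithful. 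Once that point is secured, the universal property is immediate from the adjunction plus Lemma~\ref{faithfulness of Tannakian}, and the general finite case follows by an obvious induction.
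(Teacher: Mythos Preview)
Your approach is correct in outline, but you are making it harder than it needs to be, and the paper's argument is both shorter and slightly different in emphasis.

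Once you know that $(-)^{\Noohi}$ is left adjoint to the inclusion, the entire proof is: coproducts exist in topological groups (the abstract free product with the finest group topology making the inclusions continuous), and left adjoints preserve colimits; hence $(\pi_1 * \pi_2)^{\Noohi}$ is the coproduct in Noohi groups. Full stop. Your ``main obstacle'' about the structure maps $\pi_i \to (\pi_1 * \pi_2)^{\Noohi}$ being ``jointly non-degenerate enough'' is not an obstacle at all: the universal property of a coproduct says nothing about the structure maps being injective or detecting anything --- it only demands existence and uniqueness of the induced map \emph{out}, and the adjunction delivers both. Likewise your appeal to Lemma~\ref{faithfulness of Tannakian} for uniqueness is redundant; uniqueness is already part of the adjunction bijection $\Hom_{\mathrm{Noohi}}((\pi_1*\pi_2)^{\Noohi},H)\cong\Hom_{\mathrm{Top}}(\pi_1*\pi_2,H)$.

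The paper takes a different but closely related route: rather than building $\pi_1 * \pi_2$ as a topological group and then Noohi-ifying, it directly writes down the tame infinite Galois category whose fundamental group is the desired coproduct, namely the category of discrete sets equipped with independent continuous actions of $\pi_1$ and $\pi_2$, and cites \cite[Example 7.2.6]{BS15}. This sidesteps any discussion of the topology on the free product. The two constructions agree because $(\pi_1 * \pi_2)\text{-}(\Sets)$ \emph{is} that category, but the paper's phrasing is cleaner and immediately extends to the fiber coproduct $\pi_1\coprod^{\pi}\pi_2$ (sets with two actions agreeing on $\pi$), which the paper also records.
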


\begin{proof}
It suffices to show that $\pi_1\coprod^\pi \pi_2$ exists for any diagram
$\pi_1\xleftarrow{p}\pi\xrightarrow{q}\pi_2$ of Noohi groups.  Indeed
when \(\pi\) is trivial, $\pi_1\coprod \pi_2$ is just the Noohi
group corresponding to the tame infinite Galois category of discrete sets equipped
with two independent actions from $\pi_1$ and $\pi_2$. See
\cite[Example 7.2.6]{BS15}.

In general, $\pi_1\coprod^\pi \pi_2$ is nothing but the
    Noohi quotient of $\pi_1\coprod\pi_2$ by the relations
    $\{p(x)=q(x)\}_{x\in\pi}$. Alternatively, it can also be
    defined by the Noohi group corresponding to the tame infinite
    Galois category of discrete sets equipped
with two  actions from $\pi_1$ and $\pi_2$ which
agree on $\pi$.
\end{proof}


\subsection{A construction of Van Kampen}

The construction we are about to discuss, in the case of discrete
groups, can be found in \cite[8.4.1, p.~333]{Br06}, where it is attributed
to Van Kampen.

Let \(s\geq 1\) and let \(\pi,\pi ',\pi_1'',\ldots,\pi_s''\) be
Noohi groups.  Let \(\psi_i: \pi_i''\to \pi\), \(\phi_i:\pi_i''\to \pi
'\) be continuous homomorphisms.  We define a
(discrete) group \(F\) by generators  \(\{u_{ij}: 1\leq
i,j\leq s\}\) and relations \(u_{ii}=e\), \(u_{ij}u_{jk}=u_{ik}\).
Evidently, \(F\) is free of rank \(s-1\) on the generators
\(v_2,\ldots,v_s\), where \(v_i:=u_{1i}\) for \(i=2,\ldots,s\).  We
put \(v_1=e\).

\begin{lem}\label{vk-cons-1}
  The following Noohi groups are isomorphic:
  \begin{itemize}
    \item[{\rm(i)}] The Noohi coproduct of \(\pi '\) and \(F\).
    \item[{\rm (ii)}] The Noohi coproduct of \(s\) copies of \(\pi '\),
      and one copy of \(F\), then Noohi
      quotient by the relations \(u_{ij}^{-1}[y]_iu_{ij}=[y]_j\),
      \(1\leq i,j\leq s\), \(y \in \pi '\).  Here \([y]_i\) denote \(y\)
      regarded as an element in the \(i\)-th copy of \(\pi '\).
    \end{itemize}
  \end{lem}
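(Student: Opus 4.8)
The plan is to exhibit both Noohi groups in (i) and (ii) as classifying the same tame infinite Galois category, namely the category of discrete sets equipped with an action of $\pi'$ and a compatible $F$-action, and to pin down the precise compatibility by unwinding the relations in (ii). Throughout I will use that the Noohi coproduct and Noohi quotient constructions can be read off from their associated Galois categories: a Noohi coproduct $\pi_1 \coprod \pi_2$ classifies sets with two independent actions (\ref{coproduct exists in Noohi groups}), and a Noohi quotient by relations $\{f_i = g_i\}$ classifies those objects on which the two actions agree.

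First I would describe the category $\mathcal{C}_{\mathrm{(i)}}$ classified by the Noohi group in (i): a discrete set $S$ with an action of $\pi'$ and an action of $F$, with no compatibility imposed. Since $F$ is free of rank $s-1$ on $v_2,\ldots,v_s$, an $F$-action is the same as a choice of $s-1$ bijections $v_2,\ldots,v_s$ of $S$; equivalently, setting $v_1 = e$, a system of bijections $v_i$ for $1\le i \le s$ with $v_1=\mathrm{id}$, or equivalently bijections $u_{ij} := v_i v_j^{-1}$ satisfying $u_{ii}=\mathrm{id}$ and $u_{ij}u_{jk}=u_{ik}$. So an object of $\mathcal{C}_{\mathrm{(i)}}$ is: a discrete set $S$, a $\pi'$-action, and a cocycle $(u_{ij})$ of bijections of $S$.

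Next I would describe the category $\mathcal{C}_{\mathrm{(ii)}}$ classified by the group in (ii). Before taking the quotient, the Noohi coproduct of $s$ copies of $\pi'$ and one copy of $F$ classifies a discrete set $T$ with $s$ independent $\pi'$-actions (write $[y]_i$ for the $i$-th action of $y\in\pi'$) together with an $F$-action, i.e.\ a cocycle $(u_{ij})$ of bijections of $T$. Taking the Noohi quotient by the relations $u_{ij}^{-1}[y]_i u_{ij} = [y]_j$ restricts to those objects on which these relations hold as equalities of bijections of $T$, for all $i,j$ and all $y\in\pi'$. In particular, setting $i=1$ and using $u_{11}=e$, $u_{1j}=v_j$, the relation $v_j^{-1}[y]_1 v_j = [y]_j$ shows that the $j$-th $\pi'$-action is determined by the first one and by $v_j$; and conversely, given the first action and the cocycle $(u_{ij})$, defining $[y]_j := u_{1j}[y]_1 u_{1j}^{-1}$ does satisfy all the relations (using the cocycle identity $u_{ij} = u_{1i}^{-1}u_{1j}$, one checks $u_{ij}^{-1}[y]_i u_{ij} = [y]_j$). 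Hence an object of $\mathcal{C}_{\mathrm{(ii)}}$ is the same data as: a discrete set $T$, one $\pi'$-action (the first one), and a cocycle $(u_{ij})$ of bijections.

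Comparing, the two descriptions coincide on the nose: $S = T$, the $\pi'$-action in (i) is the first action in (ii), and the cocycle $(u_{ij})$ matches. One then checks this bijection is an equivalence of categories compatible with morphisms (a morphism must intertwine all the actions; on the (ii) side, intertwining the first $\pi'$-action and the $u_{ij}$ automatically intertwines all $[y]_j$, since $[y]_j$ is expressed in terms of them) and compatible with the forgetful fiber functors to $(\mathrm{Sets})$. Taking automorphism groups of the fiber functor yields the asserted isomorphism of Noohi groups. I expect the only real subtlety to be the bookkeeping in the last paragraph — verifying that the equivalence of Galois categories is genuinely an equivalence (essential surjectivity and full faithfulness at the level of morphisms) rather than just a bijection on isomorphism classes of objects — together with making sure the relations in (ii), which are a priori indexed by all pairs $(i,j)$, are correctly reduced to the determination of $[y]_j$ from $[y]_1$ and the $v_j$; everything else is a routine application of the dictionary between Noohi groups and tame infinite Galois categories established in \cite{BS15} and recalled above.
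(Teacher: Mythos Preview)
Your approach is correct but takes a genuinely different route from the paper. The paper's proof is two lines: it specifies the isomorphism by $y \leftrightarrow [y]_1$ for $y \in \pi'$ and $g \leftrightarrow g$ for $g \in F$, then invokes the universal property of the coproduct (and of the Noohi quotient) to produce continuous homomorphisms in both directions matching these elements, which are inverse to each other by inspection. Your argument instead translates both Noohi groups into their classifying tame infinite Galois categories and matches those up directly; this is longer but perfectly valid, and is in fact the style of argument the paper itself uses later for Proposition~\ref{van kampen machine}. The paper's approach buys brevity and stays at the level of group elements; yours makes the content of the lemma transparent at the level of the objects being classified, which is arguably what one wants downstream. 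Two harmless slips in your bookkeeping: from $v_i=u_{1i}$ and the cocycle relation one gets $u_{ij}=v_i^{-1}v_j$ (not $v_iv_j^{-1}$), and correspondingly the formula determining the other actions is $[y]_j = u_{1j}^{-1}[y]_1 u_{1j}$ (not $u_{1j}[y]_1 u_{1j}^{-1}$); with these corrected your verification of the general relation goes through exactly as you indicate.
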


  \begin{proof}  The isomorphism is characterized by \(y
    \leftrightarrow [y]_1\) for \(y \in \pi '\) and \(g
    \leftrightarrow g\) for \(g \in F\).  The universal property of
    coproduct implies that there are homomorphisms in both directions
    matching these elements, and they are inverse to each other.
  \end{proof}

  \begin{lem}\label{vk-cons-2}
    The following Noohi groups are isomorphic:
  \begin{itemize}
    \item[{\rm (i)}] The Noohi coproduct of \(\pi\) and the group in
      \ref{vk-cons-1} {\rm (i)}, Noohi quotient by the relations
      \(\psi_i(a)=v_i^{-1}\phi_i(a)v_i\), \(a \in \pi ''_i\),
      \(i=1,\ldots,s\).
  \item[{\rm(ii)}] The Noohi coproduct of \(\pi\) and the group
    in \ref{vk-cons-1} {\rm (ii)}, Noohi quotient by the relations
    \(\psi_i(a)=[\phi_i(a)]_i\), \(a \in \pi ''_i\), \(i=1,\ldots,s\).
    \item[{\rm (iii)}] The Noohi coproduct of \(F\) with
      \(\pi\coprod\limits^{\pi_1'',\phi_1,\psi_1}\pi '\),
      Noohi quotient by the relations
      \(\psi_i(a)=v_i^{-1}\phi_i(a)v_i\), \(a\in \pi ''_i\), \(i=2,\ldots,s\).
      \item[{\rm (iv)}] The Noohi coproduct of \(F\) and \((\)the Noohi fiber coproduct of
        \(\pi\to\pi\coprod\limits^{\pi_1'',\phi_1,\psi_1}\pi ',\ldots,\pi\to \pi\coprod\limits^{\pi_s'',\phi_s,\psi_s}\pi ')\), Noohi quotient by the
        relations \(u_{ij}^{-1}(e*_iy)u_{ij}=e*_jy\) for all
\(y \in \pi '\), \(i,j=1,\ldots,s\).  Here \(e*_i y\) denote
the product of \(e \in \pi\) and \(y\in \pi '\) in \(\pi\coprod\limits^{\pi_i'',\phi_i,\psi_i}\pi '\).
    \end{itemize}
\end{lem}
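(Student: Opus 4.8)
The plan is to show that each of the four groups, call it $\Pi$, satisfies $\Hom(\Pi,H)\cong\mathcal P(H)$ naturally in the Noohi group $H$, where $\mathcal P(H)$ is the set of triples $(f,f',(w_i)_{2\le i\le s})$ consisting of continuous homomorphisms $f\colon\pi\to H$, $f'\colon\pi'\to H$ and elements $w_2,\dots,w_s\in H$ (set $w_1:=e$), subject to $f(\psi_i(a))=w_i^{-1}f'(\phi_i(a))w_i$ for all $1\le i\le s$ and all $a\in\pi_i''$. The lemma then follows from the Yoneda lemma.

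The formal point that makes this work is that Noohi coproducts (Lemma \ref{coproduct exists in Noohi groups}), Noohi fiber coproducts, and Noohi quotients by relations (Definition \ref{Noohi quotient}) are all colimits in the category of Noohi groups; this category is cocomplete because it is reflective in topological groups via the reflector $(-)^\Noohi$ constructed above, and topological groups are cocomplete. Consequently these constructions may be iterated in any order without changing the result, and a homomorphism from such an iterated colimit into $H$ is computed by unwinding the successive universal properties of Lemma \ref{coproduct exists in Noohi groups} and Definition \ref{Noohi quotient}.

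I would then perform this unwinding case by case. For (i): a homomorphism from the Noohi coproduct of $\pi$ with the group of \ref{vk-cons-1}(i) that kills the imposed relations is a pair $(f\colon\pi\to H,\ g\colon\pi'\coprod F\to H)$ satisfying $f(\psi_i(a))=g(v_i)^{-1}g(\phi_i(a))g(v_i)$; writing $g$ as $(f'\colon\pi'\to H,\ w_i:=g(v_i))$ this is precisely an element of $\mathcal P(H)$. For (iii): contracting the $i=1$ relation on $\pi\coprod\pi'$ produces $\pi\coprod^{\pi_1'',\phi_1,\psi_1}\pi'$, so the data becomes a pair $(f,f')$ with $f(\psi_1(a))=f'(\phi_1(a))$ together with $w_2,\dots,w_s\in H$ and the relations for $i\ge2$ — again $\mathcal P(H)$, the $i=1$ relation being the $w_1=e$ instance. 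For (ii): a homomorphism from the group of \ref{vk-cons-1}(ii) is a tuple $(f_1',\dots,f_s'\colon\pi'\to H,\ w_i\in H)$ with $(w_i^{-1}w_j)^{-1}f_i'(y)(w_i^{-1}w_j)=f_j'(y)$, i.e.\ with $w_if_i'(y)w_i^{-1}$ independent of $i$; naming this common homomorphism $f'$ gives $f_i'(y)=w_i^{-1}f'(y)w_i$, and after the coproduct with $\pi$ and the relations $f(\psi_i(a))=f_i'(\phi_i(a))$ we recover $\mathcal P(H)$. Case (iv) is identical: the Noohi fiber coproduct of the maps $\pi\to\pi\coprod^{\pi_i'',\phi_i,\psi_i}\pi'$ over $\pi$ represents tuples $(f,f_1',\dots,f_s')$ with $f(\psi_i(a))=f_i'(\phi_i(a))$, and imposing — after the coproduct with $F$ — the relations $u_{ij}^{-1}(e*_iy)u_{ij}=e*_jy$ again forces $w_if_i'(y)w_i^{-1}$ to be independent of $i$, so one concludes exactly as in (ii).

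The only genuine subtlety, and the step I would be most careful about, is the bookkeeping in (ii) and (iv): reconciling the ``$s$ copies of $\pi'$ cyclically conjugated by the $w_i$'' description with the ``one copy of $\pi'$ together with elements $w_i$'' description, and checking that in the presence of $F$ (with $v_1=e$ and $u_{ij}=v_i^{-1}v_j$) the relations $u_{ij}^{-1}[y]_iu_{ij}=[y]_j$ over all $i,j$ are generated by their $i=1$ instances. Continuity of all structural maps is automatic, since every group in sight is either discrete ($F$) or one of the given Noohi groups, and the comparison maps of Lemma \ref{coproduct exists in Noohi groups} and Definition \ref{Noohi quotient} are continuous by construction; hence $\mathcal P$ is indeed a set-valued functor on Noohi groups and Yoneda applies.
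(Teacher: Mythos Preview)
Your argument is correct and is essentially the same as the paper's, which also rests entirely on universal properties: the paper constructs pairwise isomorphisms (i)$\leftrightarrow$(ii) via Lemma~\ref{vk-cons-1}, (i)$\leftrightarrow$(iii) by regrouping, and (iii)$\leftrightarrow$(iv) ``in a way similar to~\ref{vk-cons-1}'', whereas you show directly that all four represent the single functor $\mathcal P$ and invoke Yoneda. The difference is purely organizational---your presentation is more uniform and makes the bookkeeping in (ii) and (iv) explicit, while the paper's is terser and chains the comparisons.
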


\begin{proof}  The isomorphism between (i) and (ii) is induced by that
  of \ref{vk-cons-1}.  The isomorphism between (i) and (iii) is rather
  obvious. The isomorphism between (iii) and (iv) is constructed in a
  way similar to that of \ref{vk-cons-1}.
\end{proof}

\begin{rmk} We will denote the group in \ref{vk-cons-2} as \({\bf
    VK}(\pi,\pi
  ';\pi_1'',\ldots,\pi_s'')_{(\psi_1,\ldots,\psi_s),(\phi_1,\ldots,\phi_s)}\)
  and omit the homomorphisms in subscripts when there is no
  confusion.  The description (i) is the one usually found in the
  literature, e.g.~\cite[8.4.1, p.~333]{Br06}.  It relies on single
  out the index \(1\) and so does (iii).  The descriptions (ii)
  and (iv) make it clear that the construction actually treats all
  indices in equal footing.  This construction is very useful due to
  the next result.
\end{rmk}

\def\scrC{\mathscr{C}}
\def\scrD{\mathscr{D}}
\def\scrE{\mathscr{E}}
\begin{sett}
  To state the result, we work in the following situation with
  \(s\geq 1\):
  \begin{itemize}
    \item Let \((\scrC,F)\), \((\scrD,G)\),
  \((\scrE_1,H_1),\ldots,(\scrE_s,H_s)\) be tame infinite Galois
  categories.
  \item For \(j=1,\ldots,s\), let
  \(u_j:\scrC\to \scrE_j\) (resp.~\(v_j:\scrD\to \scrE_j\)) be  a functor
 such that \((\scrC,H_j\circ u_j)\) (resp.~\((\scrD,H_j\circ v_j)\) is a tame infinite Galois
 category and \(H_j\circ u_j\) is compatible with \(F\)
 (resp.~\(H_j\circ v_j\) is compatible with \(G\)) in the sense of
 \cite[Definition 3.14]{Lara19}.
 \item Recall that an object of the 2-fibre product
     (\cite[\href{https:https://stacks.math.columbia.edu/tag/003R}{003R}]{stacks-project})
\[
\scrC \mathop{\times}\limits_{\scrE_1\times\cdots\times \scrE_s} \scrD
\]
is a triple \((X,Y,\phi)\), where \(X \in \scrC\), \(Y \in \scrD\),
and \(\phi\) is an isomorphism 
\[
    \bigl(u_j(X)\bigr)_{1\leq j\leq
s}\to\bigl(v_j(Y)\bigr)_{1\leq j\leq s}
\]
in $\scrE_1\times\cdots\times \scrE_s$. We define a functor \(F'\) from this
2-fibre product to \(({\rm Sets})\) by setting \(F'(X,Y,\phi)=F(X)\).
\item For \(j=1,\ldots,s\), choose an isomorphism of fiber functors on
  \(\scrC\colon H_j\circ u_j \xrightarrow{\cong} F\) and denote the
  composition \(\pi_1(\scrE_j,H_j)\to \pi_1(\scrC,H_j\circ u_j) \to
  \pi_1(\scrC,F)\) by \(\psi_j\).  We also choose an isomorphism of fiber functors on
  \(\scrD\colon H_j\circ v_j \xrightarrow{\cong} G\) and denote the
  composition \(\pi_1(\scrE_j,H_j)\to \pi_1(\scrD,H_j\circ v_j) \to
  \pi_1(\scrD,G)\) by \(\phi_j\).
\end{itemize}
\end{sett}

\begin{prop} \label{van kampen machine}
  With the above setting, the pair
\[
(\scrC \mathop{\times}\limits_{\scrE_1\times\cdots\times \scrE_s} \scrD,F')
\]
is a tame infinite Galois category, whose fundamental group is
isomorphic  to
\[
{\bf VK}(\pi_1(\scrC,F),\pi_1(\scrD,G);\pi_1(\scrE_1,H_1),\ldots,\pi_1(\scrE_s,H_s))_{(\psi_1,\ldots,\psi_s),(\phi_1,\ldots,\phi_s)}.
\]

\end{prop}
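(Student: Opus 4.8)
The plan is to translate the $2$-fibre product into the category of discrete sets with an action of the Noohi group $\mathbf{VK}(\ldots)$, and then invoke the Noohi yoga. By the reconstruction theorem \cite[Theorem 7.2.5]{BS15} I may replace $(\scrC,F)$, $(\scrD,G)$, $(\scrE_j,H_j)$ by $\pi\mbox{-}(\Sets)$, $\pi'\mbox{-}(\Sets)$, $\pi''_j\mbox{-}(\Sets)$, with $F,G,H_j$ the forgetful functors and $\pi=\pi_1(\scrC,F)$, $\pi'=\pi_1(\scrD,G)$, $\pi''_j=\pi_1(\scrE_j,H_j)$. The hypothesis that $H_j\circ u_j$ is compatible with $F$ in the sense of \cite[Definition 3.14]{Lara19}, together with the chosen isomorphism $H_j\circ u_j\cong F$, identifies $u_j$ with restriction of actions along the homomorphism $\psi_j$ singled out in the Setting (here I use the functoriality of $\pi_1$, the reconstruction theorem, and \ref{faithfulness of Tannakian} for uniqueness); likewise $v_j$ becomes restriction along $\phi_j$. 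Under these identifications an object of $\scrC\times_{\scrE_1\times\cdots\times\scrE_s}\scrD$ is exactly a triple $(X,Y,(\beta_j)_{1\le j\le s})$, where $X$ is a discrete $\pi$-set, $Y$ a discrete $\pi'$-set, and $\beta_j\colon X\to Y$ a bijection with $\beta_j(\psi_j(a)x)=\phi_j(a)\beta_j(x)$ for all $a\in\pi''_j$; and $F'$ sends such a triple to the underlying set of $X$.

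Next I would read off from description (iv) of \ref{vk-cons-2} --- the one treating all indices symmetrically --- the explicit form of a $\mathbf{VK}$-set: a discrete set $Z$ carrying a $\pi$-action, $s$ further $\pi'$-actions $\cdot_1,\ldots,\cdot_s$ with $\phi_j(a)\cdot_j z=\psi_j(a)z$ for $a\in\pi''_j$, and bijections $u_{ij}\colon Z\to Z$ with $u_{ii}=\id$, $u_{ij}u_{jk}=u_{ik}$, and $u_{ij}(y\cdot_j z)=y\cdot_i u_{ij}(z)$ for $y\in\pi'$. I then build an equivalence between $\scrC\times_{\scrE_1\times\cdots\times\scrE_s}\scrD$ and $\mathbf{VK}\mbox{-}(\Sets)$ directly: to a triple $(X,Y,(\beta_j))$ attach $Z=X$ with its $\pi$-action, let $\cdot_j$ be the transport along $\beta_j$ of the $\pi'$-action on $Y$, and put $u_{ij}=\beta_i^{-1}\circ\beta_j$; the $\pi''_j$-equivariance of $\beta_j$ is precisely the stated compatibility of $\cdot_j$ with the $\pi$-action, while the cocycle and intertwining relations for the $u_{ij}$ are formal. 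In the other direction, to a $\mathbf{VK}$-set $Z$ attach $X=Z$ with its $\pi$-action, $Y=Z$ with the action $\cdot_1$, and $\beta_1=\id$, $\beta_j=u_{1j}$. One checks these assignments are quasi-inverse, natural in morphisms, and compatible with the forgetful functors, so that $F'$ corresponds to the forgetful functor on $\mathbf{VK}\mbox{-}(\Sets)$.

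To finish, $\mathbf{VK}$ is a Noohi group, being produced from Noohi groups by finite Noohi coproducts (\ref{coproduct exists in Noohi groups}) and Noohi quotients (\ref{Noohi quotient}); hence $\mathbf{VK}\mbox{-}(\Sets)$ is a tame infinite Galois category whose fundamental group at the forgetful functor is $\mathbf{VK}$ (\cite[Theorem 7.2.5]{BS15}), and transporting this structure along the equivalence above yields the proposition. I expect the only real difficulty to be the bookkeeping concentrated in the middle two steps: keeping the chosen fiber-functor isomorphisms consistent so that $u_j,v_j$ are literally restriction along the $\psi_j,\phi_j$ named in the Setting, and verifying that the cocycle relations among the $\beta_j$ reproduce the defining relations of $\mathbf{VK}$ exactly rather than up to a conjugating automorphism --- this is where the symmetric presentations (ii) and (iv) of \ref{vk-cons-2} do the essential work.
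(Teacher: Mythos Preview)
Your proposal is correct and follows essentially the same strategy as the paper: translate everything to discrete sets with Noohi-group actions via \cite[Theorem~7.2.5]{BS15}, then match the resulting description with one of the presentations of $\mathbf{VK}$ from \ref{vk-cons-2}. The only difference is in which presentation is invoked: the paper singles out the index $1$, first forms the fiber coproduct $\Pi_1=\pi_1(\scrC,F)\coprod^{\pi_1(\scrE_1,H_1)}\pi_1(\scrD,G)$, and then interprets the remaining $\lambda_j$ as extra bijections on a $\Pi_1$-set, landing on description~(iii); you instead keep all indices on an equal footing and land on the symmetric description~(ii) (your explicit list of data---one $\pi$-action, $s$ transported $\pi'$-actions, and the $u_{ij}$---is exactly what (ii) yields, not (iv) as you wrote). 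Either route works and the bookkeeping you flag is handled the same way in both.
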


\begin{proof} Let us denote the pair in the proposition by \((\scrC
  ',F')\).  An object in \(\scrC '
  \) is an \((s+2)\)-tuple \((A,B,\lambda_1,\ldots,\lambda_s)\), where
  \(A \in \scrC\), \(B\in \scrD\), and
  \(\lambda_j\colon u_j(A)\xrightarrow{\cong} v_j(B)\).  We will view this
  as an object \((A,B,\lambda _1)\) of \(C\times_{\scrE_1} \scrD\),
  together with isomorphisms \((\lambda _2,\cdots,\lambda_s)\)
  in $\scrE_2\times\cdots\times \scrE_s$.

By \cite[Theorem 7.2.5]{BS15}, we may further describe a category equivalent to
\(\scrC '\) using Noohi group actions.  Indeed, \(\scrC '\) is
equivalent to the category of \(s\)-tuples
\((S,\lambda_2',\ldots,\lambda_s')\), where \(S\) is a discrete
\(\Pi_1\)-set with
\(\Pi_1:= \pi_1(\scrC,F)\mathop{\textstyle\coprod}\limits_{\pi_1(\scrE_1,H_1),\psi_1,\phi_1}\pi_1(\scrD,G)\),
and for \(s=2,\ldots,s\), \(\lambda_j'\colon S \to S\) is a bijection of
\(\pi_1(\scrE_j,H_j)\)-sets where the source (resp.~target) has the
structure of a \(\pi_1(\scrE_j,H_j)\)-set via \(u_j(A)\)
(resp.~\(v_j(B)\)).  This exactly means:
\(\phi_j(a).\lambda '_j(s)=\lambda '_j(\psi_j(a).s)\) for all \(a \in
\pi_1(\scrE_j,H_j)\), \(s \in S\).  Therefore, \(\scrC '\) is exactly
equivalently the category of discrete \(\pi\)-set, where \(\pi\) is
the Noohi group given by the proposition, by \ref{vk-cons-2} (iii).
This completes the proof by \cite[Example 7.2.2]{BS15}.  
\end{proof}

\begin{rmk} With the notation of the proof, it is obvious that under the identification
  \(\pi_1(\scrC ',F')\simeq \pi\), the maps \(\pi_1(\scrC ,F) \to
  \pi_1(\scrC ',F')\), \(\pi_1(\scrD,G)) \to \pi_1(\scrC ',F')\)
  correspond to the canonical maps \(\pi_1(\scrC,F)\to \pi\),
  \(\pi_1(\scrD,G)\to \pi\).
\end{rmk}

\begin{rmk}
More sophisticated forms of this sort of result can be found in
\cite[Corollary 5.3, p.~19]{stix2006} and \cite[Corollary 3.18,
p.~32]{Lara19}.  However, the form given here is more readily
applicable and is enough for almost all known applications, such as
\cite[Theorem~1.17]{Elena18}, which we generalize in
\ref{curve case}.  More applications are given in the next few subsections.
\end{rmk}

\subsection{Van Kampen theorems}

\begin{rmk}\label{pass to reduced structure} From now on we assume that $X$ is a locally topologically
    Noetherian scheme. Let $\Cov(X)$ denote the category of \textit{geometric covers of
    $X$} in the sense of \cite[Definition 7.3.1]{BS15}.  This
  construction gives a category fibered over the category of locally
  topological Noetherian schemes in an obivious way.
  
  Let $X_\red$
    denote the reduced induced scheme structure of $X$. Then by
    \cite[Exposé VIII, Théorème 1.1, p.~247]{SGA4} or \cite[Exposé IX, 4.10,
    p.~186]{SGA1} we can conclude that
    $\Cov(X)\xrightarrow{\simeq}\Cov(X_\red)$ just as in
    \cite[Lemma 1.15]{Elena18}. Thus if $X$ is a locally
    Noetherian connected scheme and $x\in X$ is a geometric point, then
    $\pi_1^\pet(X_\red,x)\to\pi_1^\pet(X,x)$ is an isomorphism.
\end{rmk}

\begin{lem}\label{general van kampen}
    Let $f_1:Z_1\rightarrow X, f_2: Z_2\to X$ be monomorphisms
    of
    schemes.  Assume that the induced map
    \(Z_1\coprod
Z_2\to X\) is a morphism of effective descent for  \(\Cov(-)\).  
  Then we have an equivalence given by the pullback functor:
  \[\Cov(X)\simeq \Cov(Z_1)\times_{\Cov(Z)}\Cov(Z_2)\]
  where $Z=  Z_1\times_XZ_2$. 
\end{lem}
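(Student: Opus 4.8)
The plan is to reduce the statement to pure descent theory. Set $W:=Z_1\coprod Z_2$ with its map $\pi\colon W\to X$; by hypothesis $\pi$ is a morphism of effective descent for $\Cov(-)$, so the pullback functor identifies $\Cov(X)$ with the category of descent data relative to $\pi$. It then remains to identify that category with the $2$-fibre product $\Cov(Z_1)\times_{\Cov(Z)}\Cov(Z_2)$ compatibly with pullback; this is a formal computation once one knows the fibre products occurring in the descent. (One should note at the outset that $Z_1$, $Z_2$, $Z$ are again locally topologically Noetherian, so that $\Cov(-)$ makes sense on them.)

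The key observation is that $f_1$ and $f_2$ are monomorphisms, so their relative diagonals are isomorphisms. Hence $Z_i\times_X Z_i\cong Z_i$ with both projections the canonical isomorphism, and $Z_1\times_X Z_2\cong Z\cong Z_2\times_X Z_1$; more generally every $k$-fold fibre product $Z_{i_1}\times_X\cdots\times_X Z_{i_k}$ is canonically $Z_1$, $Z_2$ or $Z$ according as the set of indices occurring equals $\{1\}$, $\{2\}$ or $\{1,2\}$ (reorder the factors so the $1$'s and the $2$'s are grouped, then collapse each block). Thus $W\times_X W$ is the disjoint union of $Z_1$, $Z_2$ and two copies of $Z$, while $W\times_X W\times_X W$ is the disjoint union of $Z_1$, $Z_2$ and six copies of $Z$ indexed by the non-constant triples $(i,j,k)\in\{1,2\}^3$, with the three projections $p_{12},p_{23},p_{13}$ sending the $(i,j,k)$-summand to the $(i,j)$-, $(j,k)$- and $(i,k)$-summands of $W\times_X W$.

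Given this, I would unwind a descent datum $(\mathcal F,\phi)$: the object $\mathcal F$ is a pair $(\mathcal F_1,\mathcal F_2)$ with $\mathcal F_i\in\Cov(Z_i)$; on the $Z_1$- and $Z_2$-summands of $W\times_X W$ both projections coincide, and restricting the cocycle identity to the triple diagonal forces $\phi$ to be the identity there (the usual normalization, \cite[\href{https://stacks.math.columbia.edu/tag/023D}{023D}]{stacks-project}); on the two $Z$-summands $\phi$ becomes isomorphisms $\alpha\colon f_1^*\mathcal F_1|_Z\xrightarrow{\sim} f_2^*\mathcal F_2|_Z$ and $\beta$ in the opposite direction. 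Then I would check the cocycle condition summand by summand on $W\times_X W\times_X W$: six of the eight equations are automatic, while the $(1,2,1)$- and $(2,1,2)$-summands give exactly $\beta\circ\alpha=\mathrm{id}$ and $\alpha\circ\beta=\mathrm{id}$. So the data amounts to a single isomorphism $\alpha$, i.e.\ to an object of $\Cov(Z_1)\times_{\Cov(Z)}\Cov(Z_2)$, and the same bookkeeping identifies morphisms of descent data with morphisms in the $2$-fibre product. Finally, since the canonical descent datum of $\mathcal G\in\Cov(X)$ has underlying pair $(f_1^*\mathcal G,f_2^*\mathcal G)$ and canonical gluing, the resulting equivalence is the pullback functor, as claimed. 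The one step needing genuine care is the combinatorial check that the cocycle condition over the eight summands collapses to $\beta=\alpha^{-1}$ with no residual constraint on $\alpha$; everything else is formal.
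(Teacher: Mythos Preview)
Your proof is correct and follows essentially the same approach as the paper's: identify $\Cov(X)$ with descent data for $Z_1\coprod Z_2\to X$, use the monomorphism hypothesis to collapse the self-fibre-products $Z_i\times_X Z_i$ to $Z_i$, and then observe that the cocycle condition forces $\varphi_{11},\varphi_{22}$ to be identities and $\varphi_{12},\varphi_{21}$ to be mutually inverse isomorphisms over $Z$. Your write-up is more explicit about the triple-fibre-product bookkeeping, but the argument is the same.
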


\begin{proof} 
    By the effectiveness of descent, an
  object of \(\Cov(X)\) amounts to a descent datum, which consists of \(Y_i\in
  \Cov(Z_i)\), \(i=1,2\) together with \(\varphi_{ij}: p_1^* Y_i \to
  p_2^* Y_j\) for \(i,j \in \{1,2\}\) satisfying the cocycle
  condition.  The cocycle condition says that \(\varphi_{11}\) and \(\varphi_{22}\) are
  just identities (by the monomorphism assumption) and
  \(\varphi_{12},\varphi_{21}\) give
  isomorphisms \(Y_1|_Z \simeq Y_2|_Z\), inverse to each other.  Thus we get an object in the
  \(\Cov(Z_1) \times_{\Cov(Z)} \Cov(Z_2)\).  It is routine to verify
  that this construction gives an equivalence.
\end{proof}

\begin{lem} \label{van kampen}
  The asssumption of Lemma~\ref{general van kampen} is satisfied when
  \(Z_1, Z_2\) are open subschemes of \(X\) such that
  \(X=Z_1\cup Z_2\).  
\end{lem}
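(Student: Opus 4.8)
The plan is to verify directly that, under these hypotheses, $Z_1 \coprod Z_2 \to X$ is a morphism of effective descent for $\Cov(-)$; that is, that geometric covers satisfy Zariski descent. Since $Z_1, Z_2$ are open with $X = Z_1 \cup Z_2$ and $Z_1 \to X$, $Z_2 \to X$ are monomorphisms, the fibre product $Z = Z_1 \times_X Z_2$ is the open subscheme $Z_1 \cap Z_2$, and, exactly as in the proof of Lemma~\ref{general van kampen}, a descent datum for this cover reduces to a triple $(Y_1, Y_2, \varphi)$ with $Y_i \in \Cov(Z_i)$ and $\varphi\colon Y_1|_Z \xrightarrow{\ \simeq\ } Y_2|_Z$ an isomorphism over $Z$ (the cocycle condition being automatic). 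Thus I must show the pullback functor $\Cov(X) \to \Cov(Z_1) \times_{\Cov(Z)} \Cov(Z_2)$ is an equivalence. For full faithfulness I would note that, for $X$-schemes $Y, Y'$, an $X$-morphism $Y \to Y'$ is the same datum as a compatible pair of morphisms over $Z_1$ and over $Z_2$ agreeing over $Z$, since morphisms of schemes glue along open covers; this part uses nothing about the valuative criterion.

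For effectivity I would take a descent datum $(Y_1, Y_2, \varphi)$ and glue $Y_1$ and $Y_2$ along the isomorphism $\varphi$ of the open subschemes $Y_i|_Z \subseteq Y_i$, using the standard construction of gluing schemes along open subschemes \cite[Chapter II, Exercise 2.12, p.~80]{Hartshorne77}. This produces a scheme $Y$ with a morphism $Y \to X$ and identifications $Y \times_X Z_i \simeq Y_i$ compatible with $\varphi$. The morphism $Y \to X$ is étale because being étale is Zariski-local on the target, so the only remaining point is that $Y \to X$ satisfies the valuative criterion of properness.

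This is where the one genuine observation enters. Let $V$ be a valuation ring with fraction field $L$, and consider a commutative square with arrows $\Spec L \to Y$ and $\Spec V \to X$. The closed point of $\Spec V$ has image in $Z_1$ or $Z_2$, say in $Z_i$; since $Z_i$ is open in $X$ and the only open subset of $\Spec V$ containing the closed point is $\Spec V$ itself, the morphism $\Spec V \to X$ factors through $Z_i$, whence $\Spec L \to Y$ factors through $Y_i = Y \times_X Z_i$. Applying the valuative criterion for the geometric cover $Y_i \to Z_i$ to the induced square gives a unique lift $\Spec V \to Y_i \hookrightarrow Y$, and any lift of the original square factors through $Y_i$ (because its composite with $Y \to X$ does) and so coincides with this one. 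Hence $Y \to X$ satisfies the valuative criterion, $Y \in \Cov(X)$, and the datum is effective; combined with full faithfulness this yields the desired equivalence, which is precisely the hypothesis of Lemma~\ref{general van kampen}.

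I do not expect a serious obstacle: once one knows that the valuative criterion of properness is local on the target — which is exactly the remark that a map out of the spectrum of a valuation ring factors through any open containing the image of the closed point — everything reduces to elementary Zariski gluing. One could alternatively invoke that $\Cov(-)$ is a Zariski stack, as is implicit in \cite[\S 7.3]{BS15}, but the hands-on argument above is self-contained.
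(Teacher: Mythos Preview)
Your proposal is correct and is essentially what the paper has in mind: the paper's proof is a two-line appeal to ``the classical van Kampen theorem and an easy case of fpqc descent,'' noting only that $Z=Z_1\cap Z_2$. You have spelled out explicitly what that sentence entails---Zariski gluing of the $Y_i$, locality of \'etaleness, and the observation that any map from $\Spec V$ into $X$ factors through whichever $Z_i$ contains the image of the closed point, so the valuative criterion is local on the target---which is exactly the content the paper leaves implicit.
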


\begin{proof}  This is essentially the classical van Kampen theorem
  and an easy case of fpqc descent.  Notice that here
  \(Z=Z_1 \cap Z_2\).
\end{proof}

\begin{lem}\label{closed van kampen}
The asssumption of Lemma~\ref{general van kampen} is satisfied when
\(X\) is locally Notherian and
  \(Z_1, Z_2\) are closed subschemes of \(X\) such that
  \(X=Z_1\cup Z_2\) set-theoretically.
\end{lem}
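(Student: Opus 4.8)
The plan is to verify exactly the hypothesis of Lemma~\ref{general van kampen}: that $g\coloneq f_1\coprod f_2\colon Z_1\coprod Z_2\to X$ is a morphism of effective descent for the fibered category $\Cov(-)$ of geometric covers (\ref{pass to reduced structure}). First I would record that $g$ is \emph{proper and surjective}: each closed immersion $f_i\colon Z_i\hookrightarrow X$ is separated, of finite type (here one uses that $X$ is locally Noetherian), and universally closed, hence proper; a finite coproduct of proper morphisms is proper; and $g$ is surjective on underlying topological spaces precisely because $X=Z_1\cup Z_2$ set-theoretically.

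The main input is then the theorem of Grothendieck and Rydh on proper descent of étale morphisms (\cite[Proposition 1.16]{Elena18}): a proper surjective morphism of locally Noetherian schemes is of universal effective descent for the fibered category of schemes étale over the base. Thus any étale descent datum along $g$ is effective, and it remains only to upgrade this to $\Cov(-)$, i.e.\ to check that if a descent datum is given by geometric covers $W_i\to Z_i$ then the resulting étale $X$-scheme $W$ is again a geometric cover --- equivalently, being already étale, that $W\to X$ is partially proper, i.e.\ satisfies the valuative criterion of properness (in the sense of \cite[Definition 7.3.1]{BS15}). For this, let $A$ be a valuation ring with fraction field $L$, let $\Spec A\to X$ be a morphism, and let $\Spec L\to W$ be a lift of its generic point. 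Since $W\times_X Z_i\cong W_i$ and $X=Z_1\cup Z_2$, the scheme $W$ is the set-theoretic union of the images of $W_1$ and $W_2$, so the point $\Spec L\to W$ lies over some $Z_i$; hence the composite $\Spec L\to\Spec A\to X$ factors on the generic point through $f_i$, and by properness of $f_i$ the whole map $\Spec A\to X$ lifts uniquely to $\Spec A\to Z_i$. The valuative criterion for the geometric cover $W_i\to Z_i$ then lifts $\Spec L\to W_i$ to $\Spec A\to W_i\to W$, uniquely. Thus $W\to X$ is a geometric cover, and $g$ is of effective descent for $\Cov(-)$, as required.

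The step I expect to carry the technical weight is the correct invocation of the proper-descent theorem: one must know that the descended étale object is a bona fide scheme (not merely an algebraic space) and that the cited statement applies in the possibly non-quasi-compact situation forced by geometric covers. Both are handled by \cite[Proposition 1.16]{Elena18}; alternatively, one may first replace $X,Z_1,Z_2$ by their reduced induced structures (harmless by \ref{pass to reduced structure}), after which $X_\red$ becomes the scheme-theoretic amalgamated sum of $(Z_1)_\red$ and $(Z_2)_\red$ along their scheme-theoretic intersection --- since $\sqrt{I_1}\cap\sqrt{I_2}=\sqrt{I_1\cap I_2}$ cuts out $X_\red$ --- and for such gluings of schemes along closed subschemes étale descent is classical and the descended object is visibly a scheme.
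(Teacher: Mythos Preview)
Your proof is correct and follows the same route as the paper, which simply cites \cite[Proposition 1.16]{Elena18} applied to the proper surjection $Z_1\coprod Z_2\to X$. Note that Lavanda's proposition is already formulated directly for the fibered category $\Cov(-)$ of geometric covers (not merely \'etale morphisms), so your separate verification of the valuative criterion for the descended object---while correct---is unnecessary once that reference is invoked.
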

\begin{proof}
    This follows from \cite[Proposition 1.16]{Elena18} applied
    to the map
    $\phi\colon Z_1\coprod Z_2\arr X$, which relies on Rydh's
    work \cite{Rydh10}
    and generalizes \cite[IX Th\'eor\`eme 4.12]{SGA1}.
\end{proof}

\begin{lem} \label{closed van kampen v2}
Let $X$ be a locally Noetherian scheme. Let $Z\subseteq X$ be a closed subscheme.
Consider a proper surjective map
\[\tilde{X}\xrightarrow{\hspace{5pt}f\hspace{5pt}}X\]
Denote $Z\times_X \tilde{X}$ by \(\tilde Z\). Suppose that the union
of  the image
of the two closed immersions
\[
  \Delta_f:\tilde X\to \tilde X\times_X\tilde X,\quad
  \tilde Z\times_Z \tilde Z \to \tilde X \times_X \times  \tilde X
\]
is \(\tilde X\times _X \tilde X\) set-theoretically.
Then the pullback functor induces an equivalence of categories:
\[\Cov(X)\xrightarrow{\cong} \Cov(\tilde{X})\times_{\Cov(\tilde
Z)}\Cov(Z).\]
\end{lem}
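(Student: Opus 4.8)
The plan is to use proper descent twice. First, present \(\Cov(X)\) as the category of descent data for \(\Cov(-)\) along the proper surjection \(f\); then decompose \(\tilde X\times_X\tilde X\) and its triple analogue using the hypothesis, so that such a descent datum becomes a descent datum along the proper surjection \(\tilde Z\to Z\); finally apply proper descent along \(\tilde Z\to Z\). A few preliminary observations that I would record at the outset: \(\tilde X\) and all the fibre products below are locally Noetherian (as \(\tilde X\) is proper over the locally Noetherian \(X\)); \(f\) is separated, so \(\Delta_f\) is a closed immersion; and \(\tilde Z\to\tilde X\) is a closed immersion, so, since \(Z\to X\) is a monomorphism, \(\tilde Z\times_Z\tilde Z=\tilde Z\times_X\tilde Z\) is a closed subscheme of \(\tilde X\times_X\tilde X\).

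First I would invoke that a proper surjective morphism is a morphism of effective descent for \(\Cov(-)\) — this is the theorem (\cite[Proposition~1.16]{Elena18}, resting on \cite{Rydh10}) that underlies Lemma~\ref{closed van kampen}, now applied to \(f\) itself. This presents \(\Cov(X)\), via pullback along \(f\), as the category of pairs \((Y,\varphi)\) with \(Y\in\Cov(\tilde X)\) and \(\varphi\colon p_1^*Y\xrightarrow{\sim}p_2^*Y\) in \(\Cov(\tilde X\times_X\tilde X)\) satisfying the cocycle identity on \(\tilde X\times_X\tilde X\times_X\tilde X\). Pulling that identity back along the small diagonal \(\tilde X\to\tilde X\times_X\tilde X\times_X\tilde X\), and using \(p_i\circ\Delta_f=\mathrm{id}_{\tilde X}\), shows \((\Delta_f^*\varphi)^2=\Delta_f^*\varphi\) in \(\Aut(Y)\), hence \(\Delta_f^*\varphi=\mathrm{id}_Y\): every descent datum is automatically normalized.

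Next I would decompose \(\tilde X\times_X\tilde X\). By hypothesis it is the set-theoretic union of the closed subschemes \(\Delta_f(\tilde X)\) and \(\tilde Z\times_Z\tilde Z\), and a functor-of-points check identifies their scheme-theoretic intersection with the diagonally embedded \(\tilde Z\). Lemma~\ref{closed van kampen} (via Lemma~\ref{general van kampen}) then gives
\[
\Cov(\tilde X\times_X\tilde X)\ \simeq\ \Cov(\tilde X)\times_{\Cov(\tilde Z)}\Cov(\tilde Z\times_Z\tilde Z),
\]
under which \(p_i^*Y\) corresponds to \(\bigl(Y,\ \mathrm{pr}_i^*(Y|_{\tilde Z})\bigr)\) with the canonical gluing over \(\tilde Z\). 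Combined with the normalization above, this identifies a descent datum \(\varphi\) for \(Y\) along \(f\) with an isomorphism \(\beta\colon\mathrm{pr}_1^*(Y|_{\tilde Z})\xrightarrow{\sim}\mathrm{pr}_2^*(Y|_{\tilde Z})\) on \(\tilde Z\times_Z\tilde Z\) with \(\Delta^*\beta=\mathrm{id}\) — concretely \(\beta=\varphi|_{\tilde Z\times_Z\tilde Z}\). To transport the cocycle condition I would run the same decomposition on \(\tilde X\times_X\tilde X\times_X\tilde X\): a point over \(x\notin Z\) has all three coordinates equal (forced pairwise by the double-product hypothesis), and a point over \(x\in Z\) lies in \(\tilde Z\times_Z\tilde Z\times_Z\tilde Z\); so this triple product is set-theoretically the union of the small diagonal \(\Delta_3(\tilde X)\) and \(\tilde Z\times_Z\tilde Z\times_Z\tilde Z\), and \(\Cov\) of it embeds faithfully into the product of the \(\Cov\)'s of the two pieces. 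The cocycle identity for \(\varphi\) restricts trivially to \(\Delta_3(\tilde X)\) (because \(\Delta_f^*\varphi=\mathrm{id}\)) and to exactly the cocycle identity for \(\beta\) on \(\tilde Z\times_Z\tilde Z\times_Z\tilde Z\); hence \(\varphi\) is a descent datum iff \(\beta\) is, naturally in \(Y\). Since \(\tilde Z=\tilde X\times_X Z\to Z\) is a base change of \(f\), it is proper surjective and of effective descent for \(\Cov\), so a descent datum for \(Y|_{\tilde Z}\) along \(\tilde Z\to Z\) is the same as an object \(V\in\Cov(Z)\) with an isomorphism \(Y|_{\tilde Z}\simeq V|_{\tilde Z}\). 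Assembling the steps presents \(\Cov(X)\) as the category of triples \((Y,V,\theta)\), i.e.\ \(\Cov(\tilde X)\times_{\Cov(\tilde Z)}\Cov(Z)\), and tracing the identifications shows the equivalence is the pullback functor.

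The hard part will be the cocycle bookkeeping: establishing the set-theoretic decomposition of the triple fibre product, the faithfulness of the restriction functor on its \(\Cov\), and — less glamorously — pinning down the canonical identifications carefully enough that ``\(\beta=\varphi|_{\tilde Z\times_Z\tilde Z}\)'' is a genuine equality of descent data rather than an abuse of notation. Once that is in place, matching the two cocycle conditions is immediate from \(\Delta_f^*\varphi=\mathrm{id}\), and everything else is a direct application of Lemma~\ref{closed van kampen} or a short diagram chase.
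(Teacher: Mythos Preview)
Your proposal is correct and follows essentially the same route as the paper: both use the decomposition of \(\Cov(\tilde X\times_X\tilde X)\) via Lemma~\ref{closed van kampen} (with intersection \(\tilde Z\hookrightarrow\tilde X\times_X\tilde X\) via the diagonal), and both verify the cocycle condition by restricting to the small diagonal and to \(\tilde Z\times_Z\tilde Z\times_Z\tilde Z\) separately. The only cosmetic difference is that the paper constructs the quasi-inverse directly---starting from \((Y,W,\phi)\) and building the descent datum \(\lambda\) for \(f\)---whereas you run the argument forward from \(\Cov(X)\) through descent data for \(f\) and then apply effective descent a second time along \(\tilde Z\to Z\); the underlying bookkeeping is identical.
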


\begin{proof}
Let us construct a quasi-inverse of the pullback functor.  So we start with
a triple \((Y,W,\phi)\), where $Y\in\Cov(\tilde{X})$, $W\in\Cov(Z)$, and
$\phi: Y \times _{\tilde X}\tilde Z\xrightarrow{\cong} W\times_Z
\tilde Z$ is an isomorphism.
The idea is to construct a descent datum of \(Y\) for the morphism
\(f\), then we get the desired object in \(\Cov(X)\) by applying
\cite[Proposition 1.16]{Elena18}.

To construct this descent datum, we need to work on \(\tilde
X\times_X \tilde X\) using the equivalence
\[
\Cov(\tilde{X}\times_X\tilde{X})\cong
\Cov(\tilde X)\times_{\Cov(\tilde Z)}\Cov\left(
\tilde Z\times_Z\tilde Z \right),
\]
which follows from Lemma \ref{closed van kampen} because
the fiber product of the two closed immesions in the current lemma is the
closed immersion \(\tilde Z \xrightarrow{\Delta_f}\tilde
X\times_X\tilde X\).  Under this equivalence, \(p_i^*(Y)\) corresponds
to the triple \((Y,q_i^*(Y|_{\tilde Z}),{\rm can}_i)\) for \(i=1,2\).
Here \(p_i:\tilde X\times_X \tilde X\to \tilde X\), \(q_i:\tilde Z \times_Z\tilde
Z\to Z\) are projections, and \({\rm can}_i\) is the obvious canonical
isomorphism.  The desired descent datum \(\lambda:p_1^*(Y) \to p_2^*(Y)\)
corresponds to \((Y,q_1^*(Y|_{\tilde Z}),{\rm
  can}_1)\to(Y,q_2^*(Y|_{\tilde Z}),{\rm can}_2)\) given by \({\rm
  id}_Y: Y \to Y\), \(\varphi: q_1^*(Y|_{\tilde Z})\to
q_2^*(Y|_{\tilde Z})\), where \(\varphi\) is the canonical descent
datum signifying the fact that \(Y|_{\tilde Z}\) is isomorphic to \(W
\times_Z \tilde Z\) via \(\phi\).

We have to show that \(\lambda\) is indeed a descent datum.  That is,
considering the triple fibred product, the fibred product and the projections:
\[
\begin{tikzpicture}[xscale=5,yscale=2.2]
    \node (A0_0) at (0, 0) {$\tilde{X}\times_X\tilde{X}\times_X\tilde{X}$};
    \node (A0_1) at (1, 0) {$\tilde{X}\times_X\tilde{X}$};
    \node (A0_2) at (2, 0) {$\tilde{X}$};
    
    \draw[>=latex,->] ([yshift= 2pt] A0_0.east) to
        [out=40,in=140]   node[above,scale=0.5]{$p_{12}$}   ([yshift= 2pt] A0_1.west);
    \draw[>=latex,->] (A0_0.east) -- node[above,midway,scale=0.5]{$p_{23}$} 
        (A0_1.west);
    \draw[>=latex,->] ([yshift=-2pt] A0_0.east) to
        [out=-40,in=-140]    node[below,scale=0.5]{$p_{13}$}   ([yshift=-2pt] A0_1.west);

    \draw[>=latex,->]                ([yshift= 1pt] A0_1.east)
        to [out=30,in=150] node[above,scale=0.5]{$p_{1}$}([yshift= 1pt] A0_2.west);
    \draw[>=latex,->] ([yshift=-1pt] A0_1.east) to
       [out=-30,in=-150] node[below,scale=0.5]{$p_{2}$} ([yshift=-1pt] A0_2.west);
\end{tikzpicture},
\]
we have to show the cocycle condition $p_{23}^*\lambda\circ
p_{12}^*\lambda=p_{13}^*\lambda$. Since the fibred
product is covered by $\Delta_f(\tilde{X})$ and
$\tilde Z \times_Z \tilde Z$, the triple fibred product is
covered by the triple diagonal $\Delta_f^3(\tilde{X})$ and the
closed subset
\[\tilde Z\times_Z\tilde Z \times_Z \tilde Z
\] 
Thanks to
\cite[\href{https://stacks.math.columbia.edu/tag/0BTJ}{0BTJ}]{stacks-project} it is enough to
check the equality
$p_{23}^*\lambda\circ
p_{12}^*\lambda=p_{13}^*\lambda$ on $\Delta_f^3(\tilde{X})$ and
$\tilde Z \times _Z \tilde Z \times_Z \tilde Z$ separately. On
$\Delta_f^3(\tilde{X})$ all the pullbacks of $\lambda$ are
identities, so there is nothing to check. On 
$\tilde Z \times _Z \tilde Z \times_Z \tilde Z$ the equality holds
because $\lambda|_{\tilde Z\times_Z \tilde Z}=\varphi$  is a descent
datum for \(\tilde Z \to Z\).
\end{proof}

\subsection{Schemes with connected singularity}

\begin{conv}\label{nagata-J2} Let $X$ be a Nagata quasi-compact scheme, and let
    $\eta_1,\cdots,\eta_n$ be the generic points of $X$. Then
    we denote $f\colon \tilde{X}\arr X$ the relative
    normalization of the map 
    \[
        \coprod_{1\leq i\leq
        n}\Spec(\kappa(\eta_i))\longrightarrow X
    \]
    in the sense of
    \cite[\href{https://stacks.math.columbia.edu/tag/0BAK}{0BAK}]{stacks-project}.
    According to
    \cite[\href{https://stacks.math.columbia.edu/tag/0AVK}{0AVK}]{stacks-project},
    $f$ is finite (and surjective). Moreover, according to
    \cite[\href{https://stacks.math.columbia.edu/tag/035I}{035I}]{stacks-project}, the map $f\colon \tilde{X}\arr X$
    factors through the reduced induced structure
    $X_\red\subseteq X$, and the map $\tilde{X}\to X_\red$ is
    the normalization of $X_\red$ relative to the disjoint union
    of the spectrums of the generic points. See
    \cite[\href{https://stacks.math.columbia.edu/tag/0359}{0359}]{stacks-project}
    for examples of Nagata schemes.

    Assume further that $X$ is a J-2 scheme
    (\cite[\href{https://stacks.math.columbia.edu/tag/07R2}{07R2}]{stacks-project};
    see
    \cite[\href{https://stacks.math.columbia.edu/tag/07R5}{07R5}]{stacks-project}
    for examples of J-2 schemes).
    Then the singular locus $Z\subseteq X$ is closed. We give $Z$
     the reduced induced structure and put \(\tilde Z \coloneqq Z\times_X
     \tilde X\). Now it is easy to see that all the hypotheses of Lemma~\ref{closed van
      kampen v2} are satisfied.

    By
    \cite[\href{https://stacks.math.columbia.edu/tag/035A}{035A}]{stacks-project},
    \cite[\href{https://stacks.math.columbia.edu/tag/07R4}{07R4}]{stacks-project}
    and \ref{pass to reduced structure}, we may
    directly assume that
    $X$ is reduced in the proof of the following theorems.
\end{conv}

\begin{thm}\label{structure theorem when the singular
    locus is connected-covers}
    Let $X$ be a connected Nagata
    Noetherian J-2
     scheme.
Let $X_1,\cdots, X_n$ be its irreducible components. Let
$Z\subseteq X$ be the singular locus of $X_\red$.  
Let $\tilde{X}_i$ be the connected component of
$\tilde{X}$ corresponding to $X_i$, and put $\tilde Z\coloneqq
\tilde{X}\times_XZ$.  For each $1\leq i\leq n$, suppose  $\tilde{X}_i\bigcap
\tilde Z=\textstyle Z_{i1}\coprod \cdots\coprod Z_{in_i}$ is a
decomposition.  Then the pullback functor
\[\Cov(X)\xrightarrow{\ \simeq\ }\prod^{1\leq i\leq
n}_{\Cov(Z)}\left(\Cov(\tilde{X}_i)\mathop{{\times}}\limits_{\Cov(Z_{i1})\prod\cdots\prod\Cov(Z_{in_i})}\Cov(Z)\right)\]
 is an equivalence.
\end{thm}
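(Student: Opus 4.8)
The plan is to deduce the equivalence from a single application of the closed Van Kampen theorem (Lemma~\ref{closed van kampen v2}) to the normalization map, followed by a purely formal rearrangement of iterated $2$-fibre products.

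First I would invoke Convention~\ref{nagata-J2} to reduce to the case where $X$ is reduced and to record that all the hypotheses of Lemma~\ref{closed van kampen v2} are met by the finite surjective normalization $f\colon\tilde X\to X$ together with the closed subscheme $Z$: over the regular, hence normal, open subset $X\setminus Z$ the map $f$ is an isomorphism, so $\tilde X\times_X\tilde X$ is covered set-theoretically by the diagonal $\Delta_f(\tilde X)$ and by $\tilde Z\times_Z\tilde Z$, which is its fibre over $Z$. Lemma~\ref{closed van kampen v2} then yields, via pullback, an equivalence
\[
\Cov(X)\ \xrightarrow{\ \simeq\ }\ \Cov(\tilde X)\times_{\Cov(\tilde Z)}\Cov(Z).
\]

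Next I would split the two terms on the right along connected components. From $\tilde X=\coprod_{i=1}^n\tilde X_i$ one obtains $\Cov(\tilde X)\simeq\prod_{i=1}^n\Cov(\tilde X_i)$, and from $\tilde Z=\tilde X\times_XZ=\coprod_{i}\bigl(\tilde X_i\cap\tilde Z\bigr)=\coprod_{i}\coprod_{j=1}^{n_i}Z_{ij}$ one obtains $\Cov(\tilde Z)\simeq\prod_{i}\prod_{j=1}^{n_i}\Cov(Z_{ij})$; under these identifications the functor $\Cov(\tilde X)\to\Cov(\tilde Z)$ becomes, on the $i$-th factor, the restriction functor $\Cov(\tilde X_i)\to\prod_{j}\Cov(Z_{ij})$, while $\Cov(Z)\to\Cov(\tilde Z)$ becomes $W\mapsto(W|_{Z_{ij}})_{i,j}$. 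Hence $\Cov(X)$ is equivalent to $\bigl(\prod_{i}\Cov(\tilde X_i)\bigr)\times_{\prod_{i,j}\Cov(Z_{ij})}\Cov(Z)$, and it remains to match this with the $n$-fold fibre product over $\Cov(Z)$ in the statement. For that I would just compare objects: a datum for the former is a family $Y_i\in\Cov(\tilde X_i)$, an object $W\in\Cov(Z)$, and isomorphisms $\phi_{ij}\colon Y_i|_{Z_{ij}}\xrightarrow{\cong}W|_{Z_{ij}}$; a datum for the latter is a family of triples $\bigl(Y_i,W_i,(\phi_{ij})_j\bigr)$ with $\phi_{ij}\colon Y_i|_{Z_{ij}}\xrightarrow{\cong}W_i|_{Z_{ij}}$, together with a coherent system of isomorphisms identifying all the $W_i$ with one $W\in\Cov(Z)$ (the fibre product over $\Cov(Z)$ being formed along the projections $(Y_i,W_i,(\phi_{ij})_j)\mapsto W_i$). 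The two descriptions coincide, and the analogous bookkeeping on morphisms gives the required equivalence; since every step is induced by pullback, the resulting equivalence is the pullback functor.

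The real content of the argument sits in Lemma~\ref{closed van kampen v2} (and therefore in Rydh's effective-descent theorem and the Van Kampen machine) and in the verification of its hypotheses carried out in Convention~\ref{nagata-J2}. Granting those, the only thing that needs care here is the bookkeeping with the iterated $2$-fibre products in the last step; I do not expect it to be a genuine obstacle.
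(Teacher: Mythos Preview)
Your proposal is correct and follows essentially the same route as the paper: apply Lemma~\ref{closed van kampen v2} to the normalization $f\colon\tilde X\to X$ and the singular locus $Z$ (the hypotheses being recorded in Convention~\ref{nagata-J2}), then split $\Cov(\tilde X)$ and $\Cov(\tilde Z)$ along connected components and do the formal bookkeeping with iterated $2$-fibre products. The paper's proof is terser but identical in content.
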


\begin{proof}  By \ref{closed van kampen v2} the  pullback map

    \[\Cov(X)\xrightarrow{\ \simeq\ }\left(\Cov(\coprod_{1\leq i\leq
        n}\tilde{X}_i)\mathop{\times}\limits_{\Cov(\coprod_{ij}Z_{ij})}\Cov(Z)\right)=\left[\left(\prod_{1\leq i\leq
n}\Cov(\tilde{X}_i)\right)\mathop{\times}\limits_{\prod_{ij}\Cov(Z_{ij})}\Cov(Z)\right]\]is
an equivalence. Then one checks easily that the later is the
same as the right hand side in the statement of the theorem.
\end{proof}

Now we come back to fundamental groups. Assumptions and
notations are as above, and we assume in addition that $Z$ is
connected.  Choose geometric points $x\in Z$,
$x_{ij}\in  Z_{ij}$ and isomorphisms of fiber functors
$F_x\simeq F_{f(x_{ij})}$, hence isomorphisms
$\pi_1^\pet(Z,x)\simeq\pi_1^\pet(Z,f(x_{ij}))$. We denote
$\phi_{ij}$ the map
\[\pi_1^\pet(Z_{ij},x_{ij})\arr\pi_1^\pet(Z,f(x_{ij}))\xrightarrow{
\ \simeq\ }\pi_1^\pet(Z,x)\] induced by
$Z\subseteq X$, and $\psi_{ij}$ the map
\[
\pi_1^\pet(Z_{ij},x_{ij})\to
\pi_1^\pet(\tilde{X}_i,x_{ij})\xrightarrow{\ \simeq \ }\pi_1^\et(\tilde{X}_i,x_{i1})
\]
obtained by choosing isomorphisms of fiber functors $F_{x_{i1}}\simeq \cdots \simeq
F_{x_{in_i}}$, therefore isomorphisms
$\pi_1^\pet(\tilde{X}_i,x_{i1})\simeq\cdots\simeq
\pi_1^\pet(\tilde{X}_i,x_{in_i})$. Thanks to the equivalence
between the category of tame infinite Galois categories with fiber functors and
the category Noohi groups, we get

\begin{thm}\label{structure theorem when the singular
    locus is connected}
    There is an isomorphism of topological groups 
\[
\pi_1^{\textup{pro}\et}(X,x)\simeq\coprod_{1\leq i\leq
n}^{\pi_1^\pet(Z,x)}{\bf VK}\bigl(\pi_1^\et(\tilde{X}_i,x_{i1}),\pi_1^\pet(Z,x);\pi_1^\pet(Z_{i1},x_{i1}),\ldots,\pi_1^\pet(Z_{in_i},x_{in_i})\bigr)
\]

Moreover, the canonical maps $\pi_1^\pet(Z,x)\to\pi_1^\pet(X,x)$ and 
$\pi_1^\et(\tilde{X}_i,x_{i1})\to\pi_1^\pet(X,f(x_{i1}))\simeq\pi_1^\pet(X,x)$
correspond, via this isomorphism, to the projections in
the coproduct.
\end{thm}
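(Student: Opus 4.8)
The plan is to deduce the statement from the category–level description in Theorem~\ref{structure theorem when the singular locus is connected-covers}, the Van Kampen machine of Proposition~\ref{van kampen machine}, and the equivalence between tame infinite Galois categories with fiber functors and Noohi groups. First I would invoke Convention~\ref{nagata-J2} to assume $X$ reduced, and record that $X$, each $\tilde X_i$, each $Z_{ij}$ and $Z$ are locally topologically Noetherian and connected, so that the pairs $(\Cov(-),F_{(-)})$ at the chosen geometric points are tame infinite Galois categories with Noohi fundamental groups the corresponding $\pi_1^\pet$; for the normalizations one has $\pi_1^\pet(\tilde X_i)=\pi_1^\et(\tilde X_i)$ since $\tilde X_i$ is normal. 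The closed immersions $Z_{ij}\hookrightarrow\tilde X_i$, $Z_{ij}\hookrightarrow Z$, $Z\hookrightarrow X$ induce restriction functors on $\Cov(-)$, and compatibility of the induced fiber functors under the chosen identifications $F_x\simeq F_{f(x_{ij})}$ and $F_{x_{i1}}\simeq\cdots\simeq F_{x_{in_i}}$ is precisely what is needed to place us in the Setting preceding Proposition~\ref{van kampen machine}.

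Next I would apply Proposition~\ref{van kampen machine} to each factor, with $\scrC=\Cov(\tilde X_i)$ (and fiber functor $F_{x_{i1}}$), $\scrD=\Cov(Z)$, $\scrE_j=\Cov(Z_{ij})$ for $j=1,\dots,n_i$, and $u_j,v_j$ the restriction functors. This yields that $\scrC'_i\coloneqq\Cov(\tilde X_i)\times_{\prod_j\Cov(Z_{ij})}\Cov(Z)$ is a tame infinite Galois category whose fundamental group, with respect to the fiber functor $F'(A,B,\lambda)=F_{x_{i1}}(A)$, is
\[
{\bf VK}\bigl(\pi_1^\et(\tilde X_i,x_{i1}),\pi_1^\pet(Z,x);\pi_1^\pet(Z_{i1},x_{i1}),\dots,\pi_1^\pet(Z_{in_i},x_{in_i})\bigr)_{(\psi_{ij}),(\phi_{ij})},
\]
where $\phi_{ij},\psi_{ij}$ are exactly the maps defined before the theorem; this step is where the bookkeeping of base points and isomorphisms of fiber functors must be matched to the conventions of the Setting so that the machine produces these specific homomorphisms.

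Then I would handle the outer $2$-fibre product over $\Cov(Z)$. Each $\scrC'_i$ maps to $\Cov(Z)$ through its $\scrD$-component compatibly with fiber functors, corresponding under the dictionary to the canonical homomorphism $\pi_1^\pet(Z,x)\to\pi_1(\scrC'_i)$ into the Van Kampen group (by the Remark following Proposition~\ref{van kampen machine}). Forming $\prod^{1\le i\le n}_{\Cov(Z)}\scrC'_i$ is again a tame infinite Galois category, and by the description of the Noohi (fiber) coproduct as the category of discrete sets with compatible group actions (Lemma~\ref{coproduct exists in Noohi groups}, applied inductively over the $n$ factors) its fundamental group is $\coprod^{\pi_1^\pet(Z,x)}_{1\le i\le n}\pi_1(\scrC'_i)$. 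Combining this with Theorem~\ref{structure theorem when the singular locus is connected-covers} and the equivalence between tame infinite Galois categories with fiber functors and Noohi groups gives the displayed isomorphism, while the "moreover" assertion — that the canonical maps $\pi_1^\pet(Z,x)\to\pi_1^\pet(X,x)$ and $\pi_1^\et(\tilde X_i,x_{i1})\to\pi_1^\pet(X,x)$ correspond to the coproduct injections — follows from the two Remarks after Proposition~\ref{van kampen machine} together with the functoriality of the dictionary.

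The main obstacle I anticipate is organizational rather than conceptual: one must thread a single coherent system of base points and isomorphisms of fiber functors through Theorem~\ref{structure theorem when the singular locus is connected-covers}, the Setting of Proposition~\ref{van kampen machine}, and the inductive construction of the Noohi fiber coproduct, so that the maps $\phi_{ij},\psi_{ij}$ are literally those in the statement and the outer coproduct is genuinely taken over one and the same copy of $\pi_1^\pet(Z,x)$. All the substantive inputs — effective descent (Lemma~\ref{closed van kampen v2}), tameness and the Noohi dictionary (\cite[Theorem 7.2.5]{BS15}), and existence of Noohi coproducts (Lemma~\ref{coproduct exists in Noohi groups}) — are already in hand.
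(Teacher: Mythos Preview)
Your proposal is correct and follows exactly the route the paper takes: the paper's proof simply says ``This is immediate from \ref{van kampen machine} and \ref{structure theorem when the singular locus is connected-covers},'' and you have spelled out in detail how to combine those two ingredients (apply the Van Kampen machine to each factor, then assemble the outer Noohi fiber coproduct over $\pi_1^\pet(Z,x)$), together with the base-point bookkeeping. There is no substantive difference between your argument and the paper's.
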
 

\begin{proof}
This is immediate from \ref{van kampen machine} and \ref{structure theorem when the singular
    locus is connected-covers}.
\end{proof}

\subsection{Disconnected singularities: a d\'evissage}

Next, we deal with the case when the singular locus is not
connected.

\begin{sett}\label{devissage} Let $X$ be a connected Nagata Noetherian J-2 scheme.
Let $X_1,\cdots, X_n$ be its irreducible components. Let
$Z_j\subseteq
X$ $(1\leq j\leq m)$ be the connected components of the singular
locus \(Z\) of $X_\red$. 
For \(j=1,\ldots,m\), set
\[T_j\coloneqq\left(\bigcup_ {X_i\cap Z_j\neq \emptyset}X_i\right)\setminus \left(\bigcup_{t\neq
    j}Z_t\right).
\]
We notice that if \(m=0\), then \(X\) is regular and \(n=1\).  From
now on we assume \(m \geq 1\).
\end{sett}

\begin{lem}\label{devissage lemma}
  With the above setting,
  \begin{itemize}
  \item[{\rm (i)}]  Each \(T_j\) is connected and open in \(X\).
  \item[{(ii)}] \(X=T_1\cup\cdots\cup T_m\).
  \item[(iii)] For each \(1\leq j\leq m\), \(Z_j \subset T_j\) and
    \(Z_j\) is disjoint from \(T_t\) for \(t \neq j\).
     \item[{\rm (iv)}] There exists \(k\) such that \(\bigcup_{t\neq
         k} T_t\) is
       connected, \(1\leq k\leq m\).
     \end{itemize}
   \end{lem}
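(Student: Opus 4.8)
The plan is to reduce everything to point-set topology on the underlying space of $X$, which by \ref{nagata-J2} we may take to be reduced. First I would record the local structure of the cover. Let $U:=X\setminus Z$ be the regular locus (open and dense), and for each $i$ put $V_i:=X_i\cap U$. Since the pairwise intersections $X_i\cap X_{i'}$ ($i\neq i'$) lie in $Z$, each $V_i=(X_i\setminus\bigcup_{i'\neq i}X_{i'})\cap U$ is clopen in $U$; being connected (open in the irreducible $X_i$) and nonempty (it contains the generic point of $X_i$, whose local ring is a field), the $V_i$ are exactly the connected components of $U$, and $\overline{V_i}=X_i$. Moreover, if some $X_i$ were disjoint from $Z$ then $V_i=X_i$ would be clopen in the connected scheme $X$, forcing $X=X_i$ to be regular and $m=0$; so, as $m\geq1$, every $X_i$ meets $Z$, hence meets some $Z_j$. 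Writing $A_j:=\{i : X_i\cap Z_j\neq\emptyset\}$, we get $A_j\neq\emptyset$ (because $Z_j\neq\emptyset$ lies in $\bigcup_iX_i$, and a component through a point of $Z_j$ belongs to $A_j$) and every index $i$ lies in some $A_j$.

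The next step is the key identity
\[
  T_j \;=\;\Bigl(\bigcup_{i\in A_j}V_i\Bigr)\cup Z_j
  \;=\; X\setminus\Bigl(\bigcup_{i\notin A_j}X_i\;\cup\;\bigcup_{t\neq j}Z_t\Bigr).
\]
For the first equality, write $T_j=\bigl(\bigcup_{i\in A_j}X_i\bigr)\cap(U\cup Z_j)$ using $X\setminus\bigcup_{t\neq j}Z_t=U\cup Z_j$, distribute, and note $Z_j\subseteq\bigcup_{i\in A_j}X_i$ since every component through a point of $Z_j$ lies in $A_j$. For the second, compute $X\setminus T_j=\bigl(X\setminus\bigcup_{i\in A_j}X_i\bigr)\cup\bigcup_{t\neq j}Z_t$ and observe $\bigl(\bigcup_{i\notin A_j}X_i\bigr)\cap\bigl(\bigcup_{i\in A_j}X_i\bigr)\subseteq\bigcup_{t\neq j}Z_t$: a point lying on components from both groups lies on at least two components, hence in $Z$, hence in some $Z_t$ — and not in $Z_j$, since it lies on a component disjoint from $Z_j$. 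This identity immediately yields openness of $T_j$ in (i), since its complement is a finite union of closed sets; it yields (ii), since $\bigcup_jT_j=\bigl(\bigcup_iV_i\bigr)\cup\bigl(\bigcup_jZ_j\bigr)=U\cup Z=X$ (every $i$ being in some $A_j$); and it yields (iii), since $Z_j\subseteq T_j$ is visible, while $T_t=\bigl(\bigcup_{i\in A_t}X_i\bigr)\setminus\bigcup_{s\neq t}Z_s$ is disjoint from $Z_j$ whenever $t\neq j$.

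For connectedness of $T_j$ in (i), I would show first that each $V_i\cup Z_j$ with $i\in A_j$ is connected. Here $V_i$ and $Z_j$ are disjoint connected sets, so a separation $V_i\cup Z_j=P\sqcup Q$ would have to be $P=V_i$, $Q=Z_j$; but the closure of $V_i$ inside $V_i\cup Z_j$ is $X_i\cap(V_i\cup Z_j)=V_i\cup(X_i\cap Z_j)$, which strictly contains $V_i$ because $i\in A_j$ gives $X_i\cap Z_j\neq\emptyset$, so $V_i$ is not closed in $V_i\cup Z_j$ — a contradiction. Then $T_j=\bigcup_{i\in A_j}(V_i\cup Z_j)$ is a union of connected sets all containing the nonempty $Z_j$, hence connected. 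Finally, for (iv) we may assume $m\geq2$ (for $m=1$ the union $\bigcup_{t\neq1}T_t$ is empty): consider the incidence graph $\Gamma$ on $\{1,\dots,m\}$ with an edge $\{s,t\}$ whenever $T_s\cap T_t\neq\emptyset$; since $X=\bigcup_jT_j$ is connected with all $T_j$ open and nonempty, $\Gamma$ is connected, so a spanning tree of $\Gamma$ has a leaf $k$, and then $\Gamma$ restricted to $\{t\neq k\}$ is still connected; as each $T_t$ is connected, the union $\bigcup_{t\neq k}T_t$ over a family with connected incidence graph is connected. The only steps that need genuine care are the inclusion $\bigl(\bigcup_{i\notin A_j}X_i\bigr)\cap\bigl(\bigcup_{i\in A_j}X_i\bigr)\subseteq\bigcup_{t\neq j}Z_t$ underlying the key identity, and the slightly subtle connectedness of $V_i\cup Z_j$ where the two pieces are disjoint yet glued through $\overline{V_i}$; the rest is routine.
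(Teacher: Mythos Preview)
Your argument is correct and follows essentially the same route as the paper: the same rewriting of $T_j$ as the complement of $\bigl(\bigcup_{i\notin A_j}X_i\bigr)\cup\bigl(\bigcup_{t\neq j}Z_t\bigr)$ for openness, the same ``union of connected sets through $Z_j$'' for connectedness, and an equivalent graph-theoretic choice of $k$ for (iv). The only noticeable difference is in the connectedness step: rather than working with the disjoint pieces $V_i$ and $Z_j$ and arguing that $V_i$ is not closed in their union, the paper uses the sets $X_i\setminus\bigcup_{t\neq j}Z_t$, which are nonempty open subsets of the irreducible $X_i$ (hence connected) and already contain $X_i\cap Z_j\neq\emptyset$, so the gluing to $Z_j$ is immediate---a small simplification you might adopt.
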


\begin{proof}  (i) Assume \(X_i \cap Z_j \neq\emptyset\) and \(X_s \cap
  Z_j=\emptyset\).  Then \(X_i \cap X_s\), being a subset of the
  singular locus, lies in \(\bigcup_{t\neq j} Z_t\).  Therefore,
  \(X_s\) is disjoint from \(T_j\).  Thus
\[
    T_j=\left[\left(\bigcup_ {X_i\cap Z_j\ne
          \emptyset}X_i\right)\setminus \left(\bigcup_{X_s\cap Z_j=\emptyset}
    X_s\right)\right]\setminus\left(\bigcup_{t\neq
    j}Z_t\right)=\left[X\setminus \left(\bigcup_{X_t\notin
    S_j}X_t\right)\right]\setminus\left(\bigcup_{t\neq
    j}Z_t\right)
\]
is open in \(X\).  Since \(T_j\) is the union of the connected sets
\(Z_j\) and \(X_i\setminus
\bigl(\bigcup_{t\neq j} Z_t\bigr)\) (over those \(i\) such that
\(X_i\cap Z_j\neq\emptyset)\), and each of the latter intersects with
\(Z_j\), it is clear that \(T_j\) is connected.

(ii) Let \(x \in X\).  Assume \(x \in Z\), say \(x \in Z_j\).  Then it is
clear that \(x \in T_j\).  Assume \(x \notin Z\), say \(x \in X_i\).  Take
any \(j\) such that \(X_i\cap Z_j \neq \emptyset\) (such \(j\) exists;
otherwise \(X_i\) is a regular component and we have \(n=1\) and
\(m=0\)), then it is clear \(x \in T_j\).

(iii) is obvious.

(iv) We claim that we can find a permutation \((j_1,\ldots,j_m)\) of
\(\{1,\ldots,m\}\) such that \(T_{j_1} \cup \cdots\cup T_{j_r}\) is
connected for all \(1\leq r\leq m\).  Then the desired result follows
by taking \(k=j_m\).   To prove the claim, we pick \(j_1\)
arbitrarily.  Suppose that we have picked \(j_1,\ldots,j_r\) with \(r<
m\), then it suffices to take \(j_{r+1}\) such that \((T_{j_1}\cup
\cdots\cup T_{j_r}) \cap T_{j_{r+1}}\neq\emptyset\).  Indeed if such
\(j_{r+1}\) could not be found, we would deduce that
\(T_{j_1}\cup\cdots\cup T_{j_r}\) is open and closed in the connected
scheme \(X\) by (ii).  This proves the claim.
\end{proof}

  \begin{thm}\label{structure theorem when singular locus is
    disconnected-covers}
With the above setting, assume moreover that 
\(T_1':=T_2\cup\cdots\cup T_m\) is connected.  Each connected component of
\(T_1\cap T_1'\) is a connected component of the regular scheme
\(X\setminus Z\).  Let \(D_1,\ldots,D_s\) be the connected components
of \(T_1\cap T_1'\).  Then the pullback
functor
\[\Cov(X)\xrightarrow{\ \simeq \
    }\Cov(T_1)\prod_{\prod_{1\leq t\leq
    s}\Cov(D_t)}\Cov(T_1')\]
    is an equivalence.
\end{thm}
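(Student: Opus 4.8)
The plan is to reduce everything to the open van Kampen theorem, Lemma~\ref{van kampen}. First I would note that $T_1$ and $T_1':=T_2\cup\cdots\cup T_m$ are open in $X$, since each $T_t$ is open by \ref{devissage lemma}(i), and that $X=T_1\cup T_1'$ by \ref{devissage lemma}(ii). Then Lemma~\ref{van kampen} (via Lemma~\ref{general van kampen}) gives an equivalence $\Cov(X)\xrightarrow{\ \simeq\ }\Cov(T_1)\times_{\Cov(T_1\cap T_1')}\Cov(T_1')$. So the whole theorem reduces to identifying $\Cov(T_1\cap T_1')$ with $\prod_{1\le t\le s}\Cov(D_t)$, and for that it suffices to prove the intermediate claim in the statement: that $D_1,\dots,D_s$ are exactly the connected components of $T_1\cap T_1'$ and that each of them is a connected component of $X\setminus Z$. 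Granting this, $T_1\cap T_1'=\coprod_{t=1}^s D_t$ as a scheme (finitely many connected components, as $X$ is Noetherian, and they are open), hence $\Cov(T_1\cap T_1')\simeq\prod_t\Cov(D_t)$.

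To establish the intermediate claim I would argue in two steps. First, $T_1\cap T_1'$ is disjoint from $Z$: by \ref{devissage lemma}(iii), $Z_1\subseteq T_1$ while $Z_1$ is disjoint from every $T_t$ with $t\ne1$, hence from $T_1'$; and for $t\ne1$ we have $Z_t\subseteq T_t\subseteq T_1'$ while $Z_t$ is disjoint from $T_1$. So $T_1\cap T_1'$ misses $Z=Z_1\cup\cdots\cup Z_m$ and is contained in the regular open $X\setminus Z$. Second, since we may assume $X$ reduced (\ref{nagata-J2}), the generic point of each irreducible component $X_i$ is a regular point, so $X_i\setminus Z$ is a dense open in the irreducible space $X_i$, hence irreducible and connected; and any point lying on two distinct components is singular, so the $X_i\setminus Z$ are pairwise disjoint. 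Thus $X\setminus Z=\coprod_{i=1}^n(X_i\setminus Z)$ is its decomposition into connected components.

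Finally, I would read off from the defining formula $T_t=\bigl(\bigcup_{X_i\cap Z_t\ne\emptyset}X_i\bigr)\setminus\bigl(\bigcup_{u\ne t}Z_u\bigr)$ of \ref{devissage} that $T_t\setminus Z$ is a union of some of the $X_i\setminus Z$, the contributions of the crossings $X_i\cap X_j$ ($i\ne j$) being killed because they lie in $Z$. Hence $T_1\setminus Z$ and $T_1'\setminus Z=\bigcup_{t\ne1}(T_t\setminus Z)$, and therefore $T_1\cap T_1'$ (which equals the intersection of their regular parts), are unions of whole components $X_i\setminus Z$; explicitly $T_1\cap T_1'=\bigcup_{i\in\Lambda}(X_i\setminus Z)$ with $\Lambda=\{\,i : X_i\cap Z_1\ne\emptyset\text{ and }X_i\cap Z_t\ne\emptyset\text{ for some }t\ne1\,\}$. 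Since the $X_i\setminus Z$ are connected components of $X\setminus Z$, the connected components of $T_1\cap T_1'$ are precisely $\{X_i\setminus Z : i\in\Lambda\}$, i.e.\ $D_1,\dots,D_s$, each a connected component of $X\setminus Z$. I expect the main (though still routine) obstacle to be exactly this last unwinding of the definitions of the $T_t$ — making sure $T_1\cap T_1'$ is a union of \emph{entire} components $X_i\setminus Z$ rather than something smaller — while the rest is a direct application of Lemmas~\ref{devissage lemma} and \ref{van kampen}.
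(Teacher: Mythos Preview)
Your proposal is correct and follows essentially the same route as the paper: use \ref{devissage lemma}(iii) to see that $T_1\cap T_1'=(T_1\setminus Z)\cap(T_1'\setminus Z)$, observe that $T_1\setminus Z$ and $T_1'\setminus Z$ are each unions of some of the irreducible (equivalently, connected) components $X_i\setminus Z$ of the regular scheme $X\setminus Z$, and then apply Lemma~\ref{van kampen}. The paper's proof is two sentences long and omits the explicit identification of $\Lambda$ and the verification that the $X_i\setminus Z$ are the connected components of $X\setminus Z$, but your more detailed unwinding is exactly what is behind those sentences.
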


\begin{proof}  By Lemma~\ref{devissage lemma} (iii), \(T_1 \cap
  T_1'=(T_1\setminus Z)\cap (T_1'\setminus Z)\).  Since \(T_1\setminus
  Z\) and \(T_1' \setminus Z\) are both unions of some of the irreducible
  components of the regular scheme \(X\setminus Z\), the first
  statement is clear.  The second statement follows readily from \ref{van kampen}.
\end{proof}

Again, we come back to fundamental groups.   With the above setting,
we may and do assume that \(T_1':=T_2 \cup \cdots\cup T_m\) is connected by
Lemma~\ref{devissage lemma} (iv).
Let $d_t\in D_t$. Choose
isomorphisms of fiber functors  $F_{d_1}\simeq \cdots\simeq F_{d_s}$
on $T_1$, and those $F_{d_1}\simeq\cdots\simeq F_{d_s}$ on $T_1'$ , so we get
\[
\pi_1^\pet(T_1,d_1)\simeq \cdots\simeq
\pi_1^\pet(T_1,d_s)
\] 
\[
\pi_1^\pet(T_1',d_1)\simeq \cdots\simeq
\pi_1^\pet(T_1',d_s)
\]

\begin{thm}\label{structure theorem when singular locus is
    disconnected}
  There is an  isomorphism 
\[
\pi_1^\pet(X,d_1)\simeq{\bf VK}\bigl(\pi_1^\pet(T_1,d_{1}),\pi_1^\pet(T_1',d_1);\pi_1^\et(D_1,d_1),\ldots,\pi_1^\et(D_s,d_s)\bigr),
\]
where the relevant maps are $\phi_t:\pi_1^\et(D_t,d_t)\to\pi_1^\pet(T_1,d_t)\simeq\pi_1^\pet(T_1,d_1)$
and $\psi_t:\pi_1^\et(D_t,d_t)\to\pi_1^\pet(T_1',d_t)\simeq\pi_1^\pet(T_1',d_1)$. Moreover, the
canonical maps
$\pi_1^\pet(T_1,d_1)\to\pi_1^\pet(X,d_1)$ and $\pi_1^\pet(T_1',d_1)\to\pi_1^\pet(X,d_1)$ correspond, via this isomorphism, to
the projections of the coproduct. 
\end{thm}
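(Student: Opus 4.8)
The plan is to deduce this from the equivalence of covering categories in Theorem~\ref{structure theorem when singular locus is disconnected-covers} by feeding it into the Van Kampen machine, Proposition~\ref{van kampen machine}. First I would fix the data for that proposition. Since $X$ is connected and $X=T_1\cup T_1'$ by Lemma~\ref{devissage lemma}(ii), while both $T_1$ and $T_1'$ are nonempty and open in $X$, the intersection $T_1\cap T_1'$ is nonempty, so the number $s$ of its connected components $D_1,\ldots,D_s$ satisfies $s\geq 1$. Take $(\scrC,F)=(\Cov(T_1),F_{d_1})$, $(\scrD,G)=(\Cov(T_1'),F_{d_1})$, $(\scrE_t,H_t)=(\Cov(D_t),F_{d_t})$, and let $u_t\colon\Cov(T_1)\to\Cov(D_t)$ and $v_t\colon\Cov(T_1')\to\Cov(D_t)$ be the restriction functors along $D_t\hookrightarrow T_1$ and $D_t\hookrightarrow T_1'$. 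As $T_1$, $T_1'$ and the $D_t$ are connected and locally topologically Noetherian, all of $(\scrC,F)$, $(\scrD,G)$, $(\scrE_t,H_t)$ are tame infinite Galois categories; moreover $H_t\circ u_t$ (resp.\ $H_t\circ v_t$) is the fiber functor of $\Cov(T_1)$ (resp.\ $\Cov(T_1')$) at the geometric point $d_t$, and using the chosen isomorphisms $F_{d_1}\simeq\cdots\simeq F_{d_s}$ on $T_1$ (resp.\ on $T_1'$) these are identified with $F$ (resp.\ $G$), so the compatibility required in the Setting preceding Proposition~\ref{van kampen machine} holds.

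Next I would observe that each $D_t$ is a connected component of the regular scheme $X\setminus Z$, as recorded in Theorem~\ref{structure theorem when singular locus is disconnected-covers}; a regular scheme is normal, so $\pi_1^\pet(D_t,d_t)=\pi_1^\et(D_t,d_t)$, which accounts for the \'etale fundamental groups appearing as the inner terms of the ${\bf VK}$-expression. With the data above, Proposition~\ref{van kampen machine} identifies the fundamental group of the $2$-fibre product $\Cov(T_1)\times_{\prod_t\Cov(D_t)}\Cov(T_1')$, equipped with its fiber functor $F'$, with ${\bf VK}\bigl(\pi_1^\pet(T_1,d_1),\pi_1^\pet(T_1',d_1);\pi_1^\et(D_1,d_1),\ldots,\pi_1^\et(D_s,d_s)\bigr)$ for the structural maps of that Setting; these are precisely the maps $\phi_t,\psi_t$ named in the statement (in the appropriate order). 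Combining this with the equivalence $\Cov(X)\xrightarrow{\ \simeq\ }\Cov(T_1)\times_{\prod_t\Cov(D_t)}\Cov(T_1')$ of Theorem~\ref{structure theorem when singular locus is disconnected-covers} --- which carries the fiber functor $F_{d_1}$ on $\Cov(X)$ to $F'$ --- yields the desired isomorphism, $\Cov(X)$ being a tame infinite Galois category by \ref{pass to reduced structure}. The final assertion, that $\pi_1^\pet(T_1,d_1)\to\pi_1^\pet(X,d_1)$ and $\pi_1^\pet(T_1',d_1)\to\pi_1^\pet(X,d_1)$ correspond to the coproduct inclusions, is exactly the Remark following Proposition~\ref{van kampen machine}.

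I do not anticipate a genuine obstacle here: the theorem is a formal translation of the covering-category statement through the Van Kampen machine, which has already been set up. The only delicate point is the bookkeeping of base points and fiber-functor identifications --- making the choices in the statement of this theorem match those implicit in the Setting of Proposition~\ref{van kampen machine}, and likewise reconciling the labels $\phi_t,\psi_t$ of the statement with the $\psi_t,\phi_t$ there --- together with the elementary observation that each $D_t$, being regular, satisfies $\pi_1^\pet(D_t)=\pi_1^\et(D_t)$.
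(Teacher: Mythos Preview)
Your proposal is correct and follows exactly the paper's approach: the paper's proof is the single sentence ``This follows immediately from \ref{structure theorem when singular locus is disconnected-covers} and \ref{van kampen machine},'' and you have simply unpacked that sentence, including the observation that each $D_t$ is regular so $\pi_1^\pet(D_t)=\pi_1^\et(D_t)$ and the label swap between the $\phi_t,\psi_t$ here and the $\psi_j,\phi_j$ in the Setting of Proposition~\ref{van kampen machine}.
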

\begin{proof} This follows immediately from \ref{structure theorem when singular locus is
    disconnected-covers} and \ref{van kampen
    machine}.\end{proof}

\begin{rmk}  Observe that the singular locus of \(T_1\) is \(Z_1\) and
  that of \(T_1'\) is \(Z_2\cup\cdots \cup Z_m\).  Therefore, the
  above theorem provides an inductive way to compute pro-\'etale
  fundamental groups in terms of pro-\'etale fundmanental groups of
  connected schemes with connected singular locus.  The latter is
  already handled by \ref{structure theorem when the singular locus is
    connected}.  By further performining an induction on the dimension
  of the singular locus, we can describe the pro-\'etale
  fundamental group of an arbitrary (Nagata, J-2, and Noetherian)
  connected scheme in terms of \'etale fundamental groups of normal schemes.
\end{rmk}

\begin{cor} Let \(\scrC\) be the smallest class of Noohi groups such
  that
  \begin{itemize}
  \item \(\scrC\) contains the \'etale fundamental groups of connected
    normal
    schemes.
  \item \(\scrC\) contains discrete free groups of finite rank.
  \item \(\scrC\) is closed under fiber coproducts and quotients.
  \end{itemize}
 Then the pro-\'etale fundamental group of a Nagata, J-2, Noetherian
 connected scheme lies in \(\scrC\).
\end{cor}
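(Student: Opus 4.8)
The plan is to run a double induction, lexicographically on the pair $(\dim Z,m)$, where $Z\subseteq X$ is the singular locus of $X_\red$ and $m$ is the number of connected components of $Z$, reducing at each step — by way of the Van Kampen descriptions above — to pro-\'etale fundamental groups with a strictly smaller such invariant. By \ref{pass to reduced structure} we may assume $X$ reduced, and we use freely that being Nagata, J-2 and Noetherian passes to locally closed subschemes. The base case is $Z=\emptyset$: then $X$ is regular, hence normal, so $\pi_1^\pet(X)=\pi_1^\et(X)$ lies in $\scrC$ by its first defining property.

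When $Z$ is connected and nonempty (so $m=1$), I would apply \ref{structure theorem when the singular locus is connected}, which exhibits $\pi_1^\pet(X)$ as an iterated fiber coproduct over $\pi_1^\pet(Z)$ of the groups ${\bf VK}\bigl(\pi_1^\et(\tilde X_i),\pi_1^\pet(Z);\pi_1^\pet(Z_{i1}),\ldots,\pi_1^\pet(Z_{in_i})\bigr)$. By \ref{vk-cons-2}(i) combined with \ref{vk-cons-1}, each such ${\bf VK}$-group is built from its input groups and a finite-rank free discrete group using only Noohi coproducts (a case of fiber coproducts, \ref{coproduct exists in Noohi groups}) and Noohi quotients; hence $\scrC$ is stable under forming it, and also under the outer fiber coproduct over $\pi_1^\pet(Z)$. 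It therefore suffices to place the inputs in $\scrC$: the $\tilde X_i$ are connected normal, so $\pi_1^\et(\tilde X_i)\in\scrC$; and $Z$, $Z_{ij}$ are connected, Nagata, J-2, Noetherian, with singular loci that are nowhere dense, hence of dimension strictly less than $\dim Z$, so $\pi_1^\pet(Z)$ and $\pi_1^\pet(Z_{ij})$ lie in $\scrC$ by the induction hypothesis.

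When $m\geq 2$, I would first relabel the $Z_j$ using \ref{devissage lemma}(iv) so that $T_1':=T_2\cup\cdots\cup T_m$ is connected, then invoke \ref{structure theorem when singular locus is disconnected}, which identifies $\pi_1^\pet(X)$ with ${\bf VK}\bigl(\pi_1^\pet(T_1),\pi_1^\pet(T_1');\pi_1^\et(D_1),\ldots,\pi_1^\et(D_s)\bigr)$, where $D_1,\ldots,D_s$ are the connected components of $T_1\cap T_1'$. Again $\scrC$ is stable under this ${\bf VK}$-construction, so only the inputs need attention. Each $D_t$ is a connected component of the regular scheme $X\setminus Z$, hence connected normal, giving $\pi_1^\et(D_t)\in\scrC$. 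The open subschemes $T_1,T_1'$ are connected (\ref{devissage lemma}), Nagata, J-2, Noetherian, with singular loci $Z_1$ and $Z_2\cup\cdots\cup Z_m$, having $1$ and $m-1$ connected components respectively and dimension at most $\dim Z$; thus their invariants lie strictly below $(\dim Z,m)$ in the lexicographic order, and the induction hypothesis yields $\pi_1^\pet(T_1),\pi_1^\pet(T_1')\in\scrC$. This closes the induction.

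The one point requiring care is the bookkeeping of well-foundedness: one must verify that in every reduction the pair $(\dim(\text{singular locus}),\#(\text{components}))$ strictly decreases lexicographically — which is exactly why the connected and disconnected cases must be organized as a genuine double induction rather than a single one — and that the auxiliary schemes $Z$, $Z_{ij}$, $T_1$, $T_1'$, $D_t$ all remain Nagata, J-2, Noetherian, and connected where that is needed. Once this is in place, the closure properties of $\scrC$ together with \ref{vk-cons-1}--\ref{vk-cons-2} finish the argument with no further computation.
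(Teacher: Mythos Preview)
Your proposal is correct and matches the paper's approach: the paper states the Corollary without proof, but the immediately preceding Remark outlines exactly the double induction you carry out---first on the number $m$ of connected components of the singular locus via \ref{structure theorem when singular locus is disconnected}, then on $\dim Z$ via \ref{structure theorem when the singular locus is connected}. Your lexicographic bookkeeping on $(\dim Z,m)$ and the verification that the auxiliary schemes inherit the Nagata, J-2, Noetherian hypotheses are precisely the details the paper leaves to the reader.
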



\subsection{An example} 
As an example, we will compute the pro-\'etale fundamental group of an
aribtrary connected curve over a separably closed field,
generalizing \cite[Theorem 1.17]{Elena18}, who treated the stable case.  In
fact, we can handle more than curves. Let's
resume \ref{nagata-J2} and \ref{devissage}.
The key assumption that we now make is the
following:
\begin{itemize}
    \item[] {\it The pro-étale fundamental group of  every connected
        component of \(Z\) or \(\tilde Z\) is trivial.} \end{itemize}
This condition is satisfied if $Z$ is a disjoint union of spectrums of
separably closed fields, e.g. when $X$ is a curve over a separably closed
field. 
Let's choose $x\in
X$, $x_i\in\tilde{X}_i$, and paths $F_x\simeq F_{f(x_1)}\simeq
\cdots\simeq F_{f_{(x_n)}}$. 
Then we have

\begin{thm}\label{curve case} With the above setting, \(\pi_1^{\textup{pro\'et}}(X,x)\) is
  the Noohi coproduct of \(\pi_1^{\textup{\'et}}(\tilde X_i,x_i)\),
  \(i=1,\ldots, n\), and a discrete free group of rank
  \(\tilde{m}-m-n+1\), where $m$ denotes the number of connected
  components of $Z$ and $\tilde{m}$ denotes that of $\tilde{Z}$.
\end{thm}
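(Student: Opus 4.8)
The plan is to reduce to the case where \(X\) is reduced (\ref{pass to reduced structure}, \ref{nagata-J2}) and then induct on \(m\), the number of connected components of \(Z\). The base case \(m\le 1\) is read off from Theorem \ref{structure theorem when the singular locus is connected}; the inductive step uses the dévissage Theorem \ref{structure theorem when singular locus is disconnected}.

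\emph{Base case.} If \(m=0\) then \(X\) is regular and connected, hence normal and irreducible, so \(n=1\), \(\tilde X=X\), \(\pi_1^\pet(X)=\pi_1^\et(\tilde X_1)\), and the asserted free rank is \(0-0-1+1=0\). If \(m=1\), then \(Z\) is connected; by the key hypothesis \(\pi_1^\pet(Z)=1\) and \(\pi_1^\pet(Z_{ij})=1\) for all \(i,j\), the \(Z_{ij}\) being connected components of \(\tilde Z\). Feeding this into Theorem \ref{structure theorem when the singular locus is connected}, each factor \({\bf VK}(\pi_1^\et(\tilde X_i),1;1,\ldots,1)\) (with \(n_i\) trivial gluing groups) collapses, via the descriptions in \ref{vk-cons-1} and \ref{vk-cons-2}, to the Noohi coproduct of \(\pi_1^\et(\tilde X_i)\) with a discrete free group of rank \(n_i-1\), and the amalgamated coproduct over \(\pi_1^\pet(Z)=1\) becomes a plain Noohi coproduct. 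Hence \(\pi_1^\pet(X)\) is the Noohi coproduct of the \(\pi_1^\et(\tilde X_i)\) with a free group of rank \(\sum_i(n_i-1)\). Since \(\tilde Z=\coprod_i(\tilde X_i\cap \tilde Z)\) we get \(\sum_i n_i=\tilde m\), while every \(\tilde X_i\) meets \(\tilde Z\) (for \(n\ge 2\) because \(X\) is connected; the case \(n=1\) is immediate); so the rank is \(\tilde m-n=\tilde m-m-n+1\).

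\emph{Inductive step (\(m\ge 2\)).} Reorder the \(Z_j\) so that \(T_1':=T_2\cup\cdots\cup T_m\) is connected (\ref{devissage lemma} (iv)). By Theorem \ref{structure theorem when singular locus is disconnected},
\[
\pi_1^\pet(X)\simeq {\bf VK}\bigl(\pi_1^\pet(T_1),\pi_1^\pet(T_1');\pi_1^\et(D_1),\ldots,\pi_1^\et(D_s)\bigr),
\]
where \(D_1,\ldots,D_s\) are the connected components of \(T_1\cap T_1'\), each a connected component of the regular (hence normal) scheme \(X\setminus Z\). Now \(T_1\) is a connected Nagata Noetherian J-2 scheme whose singular locus is the \emph{connected} set \(Z_1\), and its normalization \(f^{-1}(T_1)\) has \(f^{-1}(Z_1)\), a disjoint union of connected components of \(\tilde Z\), as the preimage of \(Z_1\); thus \(T_1\) satisfies the key hypothesis, and the base case gives \(\pi_1^\pet(T_1)\simeq\bigl(\coprod_{i\in S_1}\pi_1^\et(\tilde X_i^{(1)})\bigr)\coprod F_{a_1}\), where \(S_1=\{i:X_i\cap Z_1\neq\emptyset\}\), \(\tilde X_i^{(1)}:=\tilde X_i\cap f^{-1}(T_1)\) is the normalization of the \(i\)-th component of \(T_1\), and \(a_1=r_1-|S_1|\) with \(r_1\) the number of components of \(f^{-1}(Z_1)\). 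Similarly \(T_1'\) is connected Nagata Noetherian J-2 with singular locus \(Z_2\cup\cdots\cup Z_m\) (having \(m-1\) components) and satisfies the key hypothesis, so the inductive hypothesis gives \(\pi_1^\pet(T_1')\simeq\bigl(\coprod_{i\in S_1'}\pi_1^\et(\tilde X_i^{(1')})\bigr)\coprod F_{a_1'}\), with \(S_1'=\{i:X_i\cap(Z_2\cup\cdots\cup Z_m)\neq\emptyset\}\), \(\tilde X_i^{(1')}:=\tilde X_i\cap f^{-1}(T_1')\), and \(a_1'=\bigl(\sum_{t=2}^m r_t\bigr)-|S_1'|-(m-1)+1\). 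Because \(f\) is an isomorphism over \(X\setminus Z\) and each \(D_t\) lies in a unique irreducible component \(X_{i(t)}\), one checks that for every \(i\), \(\tilde X_i^{(1)}\cup\tilde X_i^{(1')}=\tilde X_i\) and \(\tilde X_i^{(1)}\cap\tilde X_i^{(1')}=\coprod_{t:\,i(t)=i}D_t\); hence by the open-cover Van Kampen (\ref{van kampen}, \ref{general van kampen}), \(\pi_1^\et(\tilde X_i)\simeq {\bf VK}\bigl(\pi_1^\et(\tilde X_i^{(1)}),\pi_1^\et(\tilde X_i^{(1')});(\pi_1^\et(D_t))_{i(t)=i}\bigr)\), with the obvious one-sided statement when one of the two opens is empty. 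Substituting the descriptions of \(\pi_1^\pet(T_1)\) and \(\pi_1^\pet(T_1')\) into the large \({\bf VK}\) and using the associativity of the Van Kampen construction (the equivalent forms (i)--(iv) of \ref{vk-cons-2} together with the remark following \ref{van kampen machine} identifying the structure maps), the amalgamations along the \(\pi_1^\et(D_t)\) reassemble the pieces \(\pi_1^\et(\tilde X_i^{(1)}),\pi_1^\et(\tilde X_i^{(1')})\) into the groups \(\pi_1^\et(\tilde X_i)\), and what remains is a discrete free group. Its rank is most transparently the first Betti number of the incidence graph with vertex set \(\{\tilde X_i\}_{i=1}^n\cup\{Z_j\}_{j=1}^m\) and one edge for each connected component of \(\tilde Z\) (joining \(\tilde X_{i(\alpha)}\) to \(Z_{j(\alpha)}\)); this graph is connected because \(X\) is (each incidence \(X_i\cap X_{i'}\subseteq Z\) is visible in \(\tilde Z\)), so the rank equals \(\tilde m-(n+m)+1\), as claimed. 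Equivalently, a direct count using \(\sum_{t=1}^m r_t=\tilde m\) gives \(a_1+a_1'+(s-1)-\sum_i\max(s_i-1,0)=\tilde m-m-n+1\), where \(s_i=|\{t:i(t)=i\}|\).

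\emph{Main obstacle.} The substantive difficulty is the bookkeeping in the inductive step: tracking the index sets \(S_1,S_1'\), the components \(D_t\) together with their home components \(X_{i(t)}\), and the integers \(r_t\); invoking the form of \ref{vk-cons-2} that makes the relevant associativity of \({\bf VK}\) manifest, so that the glued normalization pieces genuinely reconstitute the Noohi coproduct of the \(\pi_1^\et(\tilde X_i)\) with exactly the leftover free rank \(\tilde m-m-n+1\); and verifying carefully that \(T_1\) and \(T_1'\) inherit all standing hypotheses (Nagata, J-2, Noetherian, connected, and triviality of \(\pi_1^\pet\) of the connected components of their singular loci and of the preimages of these under the respective normalizations) so that the base case and the inductive hypothesis legitimately apply.
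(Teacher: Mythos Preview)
Your induction on \(m\) via the d\'evissage (Theorem \ref{structure theorem when singular locus is disconnected}) is exactly the route the paper explicitly mentions as ``a nice exercise'' and then \emph{declines} in favor of a shorter direct argument. The paper's proof applies Lemma \ref{closed van kampen v2} once to obtain
\(\Cov(X)\simeq \Cov(\tilde X)\times_{\Cov(\tilde Z)}\Cov(Z)\),
then---using that all the \(\pi_1^\pet(Z_j)\) and \(\pi_1^\pet(\tilde Z_k)\) are trivial---interprets an object of the right-hand side concretely as a family of \(\pi_1^\et(\tilde X_i)\)-sets \(A_i\), plain sets \(B_j\), and bijections \(\lambda_k\colon A_{i(k)}\to B_{j(k)}\). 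Choosing a spanning tree in the bipartite graph on \(\{A_i\}\cup\{B_j\}\) with edges \(\{\lambda_k\}\) (connected because \(X\) is), one uses \(n+m-1\) of the \(\lambda_k\)'s to identify all these sets with \(A_1\); the remaining \(\tilde m-n-m+1\) bijections act as arbitrary permutations of \(A_1\). This exhibits \(\Cov(X)\) as \(\pi\text{-}(\Sets)\) for \(\pi\) the asserted Noohi coproduct.

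Both approaches are valid. The paper's buys brevity and transparency: the spanning-tree step is precisely the Betti-number computation you give at the end, reached in one stroke without tracking \(T_1,T_1',S_1,S_1'\), the \(D_t\)'s, or reassembling the \(\tilde X_i^{(1)},\tilde X_i^{(1')}\). Your approach buys modularity---it shows that the theorem is a formal consequence of the two structure theorems already proved---but at the cost of the bookkeeping you flag as the ``main obstacle''. One caution: your ``associativity of the Van Kampen construction'' is doing real work, and Lemma \ref{vk-cons-2} only gives four descriptions of a single \(\mathbf{VK}\), not a general associativity law; to make the reassembly rigorous you either have to check directly that, after absorbing path-conjugations into the free generators \(v_t\), the relations for those \(D_t\) with \(i(t)=i\) present exactly \(\mathbf{VK}\bigl(\pi_1^\et(\tilde X_i^{(1)}),\pi_1^\et(\tilde X_i^{(1')});(\pi_1^\et(D_t))_{i(t)=i}\bigr)\), or---more cleanly---argue at the level of categories of covers (as the paper does) where such associativity is tautological.
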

\begin{proof}
It is a nice exercise to prove the theorem by following the route
provided by
\ref{structure theorem when
singular locus is disconnected} and proceed by induction on the
number  $m$.  However, it is easier to use the following argument.
We will apply Theroem \ref{structure theorem when the singular
    locus is connected-covers},   and argue as in the proof of
    Proposition \ref{van kampen machine}.  Write the
  connected components of \(Z\) as \(Z_1,\ldots,Z_m\), and those of
  \(\tilde Z\) as \(\tilde Z_1,\ldots,\tilde Z_{\tilde m}\).
An object of
  \(\Cov(\tilde X)\times_{\Cov{\tilde Z}}\Cov(Z)\) is
  \((\{A_i\}_{i=1}^n,\{B_j\}_{j=1}^m,\{\lambda_k\}_{k=1}^{\tilde m})\),  where \(A_i\) is a
  \(\pi_1(\tilde X_i,x_i)\)-set, \(B_j\) is a set, and \(\lambda_k\) is
  a bijection from \(A_{i(k)}\) to \(B_{j(k)}\) if \(\tilde Z_k
  \subset \tilde
  X_{i(k)}\) and \(f(\tilde Z_k)=Z_{j(k)}\).

  Using that \(X\) is connected, one can show that we can choose indices \(k_1,\ldots,k_{n+m-1}\) so
  that \(\lambda_{k_1},\ldots,\lambda_{k_{n+m-1}}\) identify
  \(A_1,\ldots,A_n,B_1,\ldots,B_m\) with each other, and hence we will use
  \(A_1\) to represent all of them.  The set \(A_1\) now
  carries the action of \(\pi_1(\tilde X_i,x_i)\), \(i=1,\ldots,n\),
  together with the action of the remaning \(\lambda_k\)'s, now
  regarded as
  bijections from \(A_1\) to itself.  There are \(\tilde m-n-m+1\) of
  them.  This description of  \(\Cov(\tilde X)\times_{\Cov{\tilde
      Z}}\Cov(Z)\) easily implies the theorem.
\end{proof} 



\subsection{Discrete representations}

Let \(\lambda \colon \pi \to G\) be a continuous homomorphism of
topological groups.  We say that \(\lambda\) is {\it discrete} if it
factors as \(\pi\to  G'\to G\) with \(G'\) discrete, or equivalently,
\(\ker(\lambda)\) is open in \(\pi\).  When \(\pi\) is a fundamental
group, it is often interesting to know the discrete homomorphisms.

\begin{lem}\label{coproduct factors through discrete group}
   Let \(\pi_1\) and \(\pi_2\) be Noohi groups and let
   \(\pi_1\coprod\pi_2\) be their Noohi coproduct.  Let
   \(\lambda:\pi_1\coprod \pi_2 \to G\) be a continouous homomorphism
   corresponding to \(\lambda _1\colon \pi_1\to G\), \(\lambda_2\colon
   \pi_2\to
   G\).  Then \(\lambda\) is discrete if and only if \(\lambda_1\) and
   \(\lambda_2\) are both discrete.
\end{lem}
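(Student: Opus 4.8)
The plan is to factor $\lambda$ through an explicit discrete quotient of $\pi_1\coprod\pi_2$. The ``only if'' direction is immediate: the canonical maps $\iota_i\colon\pi_i\to\pi_1\coprod\pi_2$ are continuous, so if $\ker\lambda$ is open then so is $\iota_i^{-1}(\ker\lambda)=\ker\lambda_i$, i.e.\ $\lambda_i$ is discrete.

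For the converse, assume $\lambda_1,\lambda_2$ are discrete and put $N_i:=\ker\lambda_i$, an open normal subgroup of $\pi_i$, so that $Q_i:=\pi_i/N_i$ is a discrete group and $\lambda_i$ factors as $\pi_i\twoheadrightarrow Q_i\hookrightarrow G$. First I would record that every discrete group is a Noohi group and that, by \ref{coproduct exists in Noohi groups}, the Noohi coproduct of the discrete groups $Q_1,Q_2$ is their free product $Q$ equipped with the discrete topology --- the relevant tame infinite Galois category being simply that of discrete sets with an action of the discrete group $Q$ (equivalently, with two independent actions of $Q_1$ and $Q_2$). Composing $\pi_i\twoheadrightarrow Q_i$ with the inclusion $Q_i\hookrightarrow Q$ and applying the universal property of the Noohi coproduct $\pi_1\coprod\pi_2$ to the Noohi group $Q$ then produces a continuous homomorphism $\mu\colon\pi_1\coprod\pi_2\to Q$. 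Spelling out the same universal property shows that $\mu$ realizes $Q$ as the Noohi quotient of $\pi_1\coprod\pi_2$ by the normal subgroup $H$ generated by $\iota_1(N_1)\cup\iota_2(N_2)$; hence, by the description of Noohi quotients in \ref{Noohi quotient}, $\ker\mu$ equals the closure of $H$, and since $Q$ is discrete $\ker\mu$ is an \emph{open} subgroup of $\pi_1\coprod\pi_2$.

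It then remains to see that $\ker\lambda\supseteq\ker\mu$, for then $\ker\lambda$ is open and $\lambda$ is discrete (and in fact factors as $\pi_1\coprod\pi_2\xrightarrow{\mu}Q\to G$, the second map being the one induced by $\lambda_1,\lambda_2$ via the universal property of the free product). Since $\lambda$ restricts to $\lambda_i$ on $\pi_i$ and $\lambda_i(N_i)=\{e\}$, the normal subgroup $\ker\lambda$ contains $H$; being closed, it then contains $\overline{H}=\ker\mu$.

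The one point that needs care is exactly this closedness of $\ker\lambda$, i.e.\ that $G$ is Hausdorff: a priori $\lambda$ only kills the abstract normal subgroup $H$, which is merely dense in $\ker\mu$, so Hausdorffness is genuinely used (in the intended applications, e.g.\ \ref{when the proetale representation factors through a discrete one}, $G$ is a discrete group, so there is nothing to check). Equivalently, when $G$ is a Noohi group the identity $\lambda=(\,Q\to G\,)\circ\mu$ is just the uniqueness part of the universal property of $\pi_1\coprod\pi_2$, since both sides restrict to $\lambda_i$ on each $\pi_i$.
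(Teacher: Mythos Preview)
Your proof is correct and follows the same approach as the paper: factor $\lambda$ through the discrete free product of the discrete quotients. The paper simply asserts that $\lambda$ factors as $\pi_1\coprod\pi_2\to G_1'\coprod G_2'\to G$ with the middle group discrete; your version spells this out more carefully and, in particular, correctly isolates the one nontrivial point---that identifying $\lambda$ with the composite (equivalently, showing $\ker\lambda\supseteq\ker\mu=\overline{H}$) uses that $G$ is Hausdorff, a hypothesis the lemma does not state but which holds in every application in the paper.
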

\begin{proof}
  The ``only if'' part is obvious.  Assume that both \(\lambda _1\)
  and \(\lambda _2\) are discrete.  Say \(\lambda _i\) factors as
  \(\pi_i\to G_i'\to G\).  Then clearly \(\lambda \) factors as
  \(\pi_1\coprod \pi_2 \to G_1'\coprod G_2'\to G\), where
  \(G_1'\coprod G'_2\) is the coproduct of \(G_1'\) and \(G_2'\) as
  topological groups.  Since \(G_1'\coprod G'_2\) is discrete,
  \(\lambda\) is discrete.
\end{proof}

\begin{lem} \label{Noohi quotient factors through discrete
    group}
Let \(\pi\) be a Noohi group and let \(\pi ':=(\pi/\langle
H\rangle)^\Noohi\) be a Noohi quotient of \(\pi\).  Consider a
commutative diagram of continuous homomorphisms with \(G\) Hausdorff:
\[
  \xymatrix{\pi\ar@{->}[rd]^\lambda\ar@{->}[d]\\
    \pi '\ar@{->}_{\lambda '}[r]&G}
\]
Then \(\lambda\) is discrete if and ony if \(\lambda  '\) is discrete.
\end{lem}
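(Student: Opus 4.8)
The plan is to prove the two implications separately. The implication ``$\lambda'$ discrete $\Rightarrow$ $\lambda$ discrete'' is free of content: by commutativity $\lambda$ is the composite $\pi\to\pi'\xrightarrow{\lambda'}G$, so any factorization of $\lambda'$ through a discrete group pulls back to one of $\lambda$, and $\ker(\lambda)$ contains the preimage of the (open) kernel of $\lambda'$, hence is open.

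For the converse I would start from the hypothesis that $\ker(\lambda)$ is open and work with the discrete group $Q:=\pi/\ker(\lambda)$, through which $\lambda$ factors as $\pi\xrightarrow{p}Q\xrightarrow{\bar\lambda}G$ with $\bar\lambda$ injective and (automatically) continuous. The point is that $\langle H\rangle\subseteq\ker(p)=\ker(\lambda)$, since the map $\pi\to\pi'$ factors through $\pi/\langle H\rangle$ and $\lambda$ factors through $\pi\to\pi'$; hence $p$ descends along the quotient map $\pi\to\pi/\langle H\rangle$ to a continuous homomorphism $\pi/\langle H\rangle\to Q$. As $Q$, being discrete, is a Noohi group, the universal property of the left adjoint $(-)^\Noohi$ yields a unique continuous homomorphism $\mu\colon\pi'=(\pi/\langle H\rangle)^\Noohi\to Q$ restricting to this map on $\pi/\langle H\rangle$. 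Unwinding the adjunction, the two composites $\pi\to\pi'\xrightarrow{\lambda'}G$ and $\pi\to\pi'\xrightarrow{\mu}Q\xrightarrow{\bar\lambda}G$ both equal $\lambda$.

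The one step that really needs an argument --- and the one I would be most careful about --- is upgrading this last agreement on the image of $\pi$ to the honest equality $\lambda'=\bar\lambda\circ\mu$ of maps out of $\pi'$. Here I would invoke Definition \ref{Noohi quotient}: $\pi'$ is the Ra\u{\i}kov completion of $\pi/\langle H\rangle^c$, so the image of $\pi$ in $\pi'$ (namely $\pi/\langle H\rangle^c$) is dense; since $G$ is Hausdorff, two continuous homomorphisms out of $\pi'$ that agree on a dense subgroup are equal. Consequently $\lambda'=\bar\lambda\circ\mu$ factors through the discrete group $Q$, so $\lambda'$ is discrete, as wanted. A secondary subtlety worth flagging is the choice of $Q$: one must take $Q=\pi/\ker(\lambda)$ rather than, say, the image of $\lambda$ inside $G$, precisely because only the former is guaranteed to receive $\pi$ with $\langle H\rangle$ in the kernel; with that choice in place the remainder is a formal diagram chase.
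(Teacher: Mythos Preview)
Your proof is correct and follows essentially the same approach as the paper's: factor $\lambda$ through a discrete group $Q$ with $Q\hookrightarrow G$ injective (the paper calls this $G'$), observe that $\langle H\rangle\subseteq\ker(\pi\to Q)$, use the Noohi adjunction to produce $\mu\colon\pi'\to Q$, and then conclude $\lambda'=\bar\lambda\circ\mu$ by density of the image of $\pi$ in $\pi'$ together with the Hausdorffness of $G$. The only cosmetic difference is that the paper phrases the density argument via the closed diagonal in $G\times G$, whereas you invoke it directly; the content is identical.
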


\begin{proof}
Again the ``only if'' part is obvious.  Assume that \(\lambda \) is
discrete and factorizes as \(\pi \to G'\xrightarrow{\phi}G\).  We may assume that
\(G'\to G\) is injective.  Then \(\ker(\pi\to G')=\ker(\pi\to G)
\supset H\).  Therefore, \(\pi\to G'\) factorizes as
\(\pi\to\pi/\langle H\rangle^c\to G'\) in the category of topological
groups.  Applying the functor \((-)^\Noohi\), we get \(\pi\to \pi
'\xrightarrow{\psi} G'\).  It remains to show \(\phi\circ\psi=\lambda
'\).

By construction \(\phi\circ\psi\) and \(\lambda '\) agree on the image
\(M\) of \(\pi\), which is a dense subset of \(\pi'\).  The map
\((\phi\circ\psi,\lambda ')\colon \pi '\to G\times G\) sends \(M\) to
the diagonal of \(G\times G\), which is closed as \(G\) is Hausdorff.
We conclude that the whole image of \((\phi\circ\psi,\lambda ')\) lies
in the diagonal.  This shows: \(\phi\circ\psi=\lambda  '\).
\end{proof}

\begin{lem}\label{discrete quotient etale case} Suppose $g\colon Y'\to Y$ is a finite surjective map of normal
    integral schemes and $y'\in Y'$ is a geometric point. Set
    $y\coloneqq
    g(y')$.  Consider a
commutative diagram of continuous homomorphisms:
\[
  \xymatrix{\pi_1^\pet(Y',y')\ar@{->}[rd]^{\lambda '}\ar@{->}[d]\\
    \pi _1^\pet(Y,y)\ar@{->}_-{\lambda }[r]&G}
\]
Then \(\lambda\) is discrete if and ony if \(\lambda  '\) is discrete.
\end{lem}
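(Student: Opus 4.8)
The plan is to reduce everything to a statement about profinite groups. Since $Y$ and $Y'$ are normal integral, $\pi_1^\pet(Y,y)=\pi_1^\et(Y,y)$ and $\pi_1^\pet(Y',y')=\pi_1^\et(Y',y')$ by \cite[7.4.10]{BS15}, so the diagram is a diagram of profinite groups, with $\alpha\colon\pi_1^\et(Y',y')\to\pi_1^\et(Y,y)$ induced by $g$ and $\lambda'=\lambda\circ\alpha$. The crucial geometric input I will establish is:
\[
(\ast)\qquad \text{the image of }\alpha\text{ is an open subgroup of }\pi_1^\et(Y,y).
\]
Once $(\ast)$ is in hand, the rest is a short topological argument in the spirit of \ref{Noohi quotient factors through discrete group}: the implication ``$\lambda$ discrete $\Rightarrow\lambda'$ discrete'' holds for any continuous $\alpha$ (because $\ker\lambda'=\alpha^{-1}(\ker\lambda)$); conversely, if $\ker\lambda'$ is open in the profinite group $\pi_1^\et(Y',y')$ it is a closed subgroup of finite index, so $\alpha(\ker\lambda')$ is compact, hence closed, and of finite index in $\operatorname{Im}(\alpha)$, hence open there, hence open in $\pi_1^\et(Y,y)$ by $(\ast)$; since $\alpha(\ker\lambda')\subseteq\ker\lambda$, the subgroup $\ker\lambda$ then contains an open neighbourhood of the identity and is open, i.e.\ $\lambda$ is discrete. (Base points and the maps between fundamental groups are defined only up to composition with inner automorphisms, which affects neither discreteness nor openness of images, so I will suppress this.)

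To prove $(\ast)$ I would first peel off the inseparable part of the function field extension. Set $K=K(Y)\subseteq L=K(Y')$ and let $L_0$ be the separable closure of $K$ in $L$, so $L_0/K$ is finite separable and $L/L_0$ is purely inseparable. Spreading out the finite étale morphism $\Spec L_0\to\Spec K$, I obtain a dense open $U\subseteq Y$ and a finite étale $Z_U\to U$ with generic fibre $\Spec L_0$; as $L_0$ is a field, $Z_U$ is connected, and being étale over the normal scheme $U$ it is normal, hence integral with function field $L_0$. Let $Y'_U$ be the normalization of $Z_U$ in $L$; by transitivity of integral closure this is also the normalization of $U$ in $L$, i.e.\ $Y'_U=Y'\times_YU$, a dense open of $Y'$. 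Because $Z_U$ is normal and $L/L_0$ is purely inseparable, every section of $\sO_{Y'_U}$ has a $p$-power in $\sO_{Z_U}$, so $Y'_U\to Z_U$ is a universal homeomorphism, and topological invariance of the étale site (\cite[Exposé VIII, Théorème 1.1]{SGA4}, as recalled in \ref{pass to reduced structure}) gives $\pi_1^\et(Y'_U)\xrightarrow{\ \sim\ }\pi_1^\et(Z_U)$.

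Finally I would chase the commutative square with horizontal arrows $\pi_1^\et(Y'_U)\to\pi_1^\et(U)$ and $\alpha\colon\pi_1^\et(Y')\to\pi_1^\et(Y)$, whose two vertical arrows $\pi_1^\et(Y'_U)\to\pi_1^\et(Y')$ and $\pi_1^\et(U)\to\pi_1^\et(Y)$ are surjective since $Y',Y$ are normal and $Y'_U\subseteq Y'$, $U\subseteq Y$ are dense opens (\cite[Exposé V]{SGA1}). The composite $\pi_1^\et(Y'_U)\cong\pi_1^\et(Z_U)\to\pi_1^\et(U)$ has open image (the index-$[L_0:K]$ subgroup cut out by the connected finite étale cover $Z_U\to U$), and pushing this forward along the continuous surjection $\pi_1^\et(U)\twoheadrightarrow\pi_1^\et(Y)$ of profinite groups (which sends open subgroups to open subgroups) shows that $\operatorname{Im}\!\big(\pi_1^\et(Y'_U)\to\pi_1^\et(Y)\big)$ is open; by commutativity and surjectivity of $\pi_1^\et(Y'_U)\to\pi_1^\et(Y')$ this image equals $\operatorname{Im}(\alpha)$, proving $(\ast)$. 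The one genuinely non-formal point is $(\ast)$, and within it the need to separate off the purely inseparable part of $K(Y')/K(Y)$ before spreading out — over $Y$ itself $g$ is typically nowhere étale, so one cannot work with $g$ directly and must route the argument through the universal homeomorphism $Y'_U\to Z_U$; the remaining manipulations with profinite groups and dense opens of normal schemes are routine.
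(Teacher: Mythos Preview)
Your proof is correct and follows the same overall architecture as the paper: identify $\pi_1^\pet$ with $\pi_1^\et$ by normality, establish that $\alpha$ has open image, and then run the short profinite-group argument to transfer discreteness in both directions. The difference lies entirely in how you prove $(\ast)$. The paper does this in one stroke via the commutative square
\[
\xymatrix{
\Gal(K(Y'))\ar@{^{(}->}[r]\ar@{->>}[d] & \Gal(K(Y))\ar@{->>}[d]\\
\pi_1^\et(Y',y')\ar[r]^-{\alpha} & \pi_1^\et(Y,y),
}
\]
using that for any finite field extension $L/K$ the absolute Galois group $\Gal(L)$ sits in $\Gal(K)$ as an open subgroup (of index the separable degree), and that for a normal integral scheme the \'etale fundamental group is a quotient of the Galois group of the function field. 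Your route---splitting off the purely inseparable part, spreading out the separable part to a finite \'etale $Z_U\to U$, and invoking topological invariance for the universal homeomorphism $Y'_U\to Z_U$---is a geometric unpacking of exactly this fact, and is perfectly valid, but the Galois-group square gets there with less scaffolding. What your approach buys is that it never leaves the category of schemes and makes the role of the separable degree visible at the level of covers; what the paper's approach buys is brevity and the avoidance of any spreading-out or choice of dense open.
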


\begin{proof}
Indeed, the following commutative diagram:
    \[
        \begin{tikzpicture}[xscale=2.0,yscale=1.2,bmr/.pic={\draw (0,0)--++(-90:2mm)--++(180:2mm)--++(90:2mm)--++(0:2mm);}]
            \path
                (0,0)     node (F) {$\Gal(K(Y'))$}
                +(0:1.5)  node (star) {$\Gal(K(Y))$}
                ++(-90:1) node (X) {$\pi_1^\et(Y',y')$}
                +(0:1.5)  node (Y) {$\pi_1^\et(Y,y)$};
            \draw[right hook->] (F)--(star);
            \draw[->>] (F)--(X);
            \draw[->] (X)--(Y) node[midway,above,scale=.6]{};
            \draw[->>] (star)--(Y);
        \end{tikzpicture} 
    \]
    implies that $\pi_1^\pet(Y',y')\simeq\pi_1^\et(Y',y')\to \pi_1^\et(Y,y)\simeq
    \pi_1^\pet(Y,y)$ is open. Thus $\Ker(\lambda)$ is open if and only if
    $\Ker(\lambda ')$ is open.
  \end{proof}
  

\begin{prop}\label{when the proetale representation
    factors through a discrete one}
Let $X$ be a connected Nagata Noetherian J-2 scheme, and let
$x\in X$ be a geometric point. Let $\{\eta_i\}_{1\leq i\leq n}$ be the set of
all
generic points of $X$, and let $A\coloneqq \prod_{1\leq i\leq n}K_i$,
where each $K_i$ is a finite extension of $\kappa(\eta_i)$. Let
$\tilde{X}_A$ denote the
normalization of $X$ inside $\Spec(A)$. Let \[\lambda\colon
\pi_1^\pet(X,x)\arr G\] be a continuous map of Hausdorff
topological groups. Then $\lambda$ is discrete
if and only if the induced map
\(
\pi_1^\pet(U,u)\arr G
\)
is discrete for each connected component $U$ of $\tilde{X}_A$ and each
geometric point $u\in U$.
\end{prop}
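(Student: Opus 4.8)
The plan is to prove both implications; the ``only if'' direction is immediate, since the composition of a discrete homomorphism with a continuous homomorphism is discrete. For the converse I would start with two reductions. By \ref{pass to reduced structure} one may assume $X$ is reduced. Next, the canonical morphism from the normalization $\tilde X_A$ of $X$ in $\Spec(A)$ to the normalization $\tilde X$ of $X$ (as in \ref{nagata-J2}) sends each connected component of $\tilde X_A$ finitely and surjectively onto a connected component $\tilde X_i$ of $\tilde X$, with every $\tilde X_i$ hit; as these are all normal integral schemes, \ref{discrete quotient etale case} reduces the statement to: ``if $\lambda$ composed with each canonical map $\pi_1^\et(\tilde X_i,x_i)=\pi_1^\pet(\tilde X_i,x_i)\to\pi_1^\pet(X,x)$ is discrete, then $\lambda$ is discrete.'' I would prove this by induction on $\dim X$, using that the singular locus $Z$ of $X$ is a proper closed subscheme (closed as $X$ is J-2, proper as $X$ is generically normal), so $\dim Z<\dim X$.

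The crucial step would be to propagate discreteness from the $\tilde X_i$ to (the normalizations of the irreducible components of) $Z$. Given an irreducible component $Z^{(l)}$ of $Z$ with normalization $\nu\colon N_l\to Z^{(l)}$, I note $Z^{(l)}$ lies, as a reduced closed subscheme, in some irreducible component $X_{i_0}$ of $X$. Since $\tilde X_{i_0}\to X_{i_0}$ is finite surjective, I choose an irreducible component $W'$ of $\tilde X_{i_0}\times_{X_{i_0}}Z^{(l)}$ dominating $Z^{(l)}$ and let $W$ be its normalization; then $W$ is normal integral, the projection $W\to Z^{(l)}$ is finite surjective and hence factors through a finite surjective $g\colon W\to N_l$, and there is a morphism $W\to\tilde X_{i_0}$. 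By the fibre-product property the two morphisms $W\to\tilde X_{i_0}\to X$ and $W\to N_l\to Z^{(l)}\hookrightarrow X$ coincide, so the induced map $\pi_1^\et(W,w)\to\pi_1^\pet(X,x)$ (defined up to a path, hence up to conjugacy) factors both through $\pi_1^\et(\tilde X_{i_0},\cdot)$ and through $\pi_1^\et(N_l,\cdot)$. Composing with $\lambda$ and using the hypothesis on $\tilde X_{i_0}$, the composite $\pi_1^\et(W,w)\to G$ is discrete; \ref{discrete quotient etale case} applied to $g$ then gives that $\lambda|_{\pi_1^\pet(N_l,\cdot)}=\lambda|_{\pi_1^\et(N_l,\cdot)}$ is discrete.

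Next I would let $Z_1,\dots,Z_m$ be the connected components of $Z$: each is a connected Nagata Noetherian J-2 scheme with $\dim Z_j<\dim X$ whose normalization has connected components exactly the $N_l$ with $Z^{(l)}\subseteq Z_j$, so by the previous step $\lambda$ restricts discretely to each $\pi_1^\pet(N_l,\cdot)$; the inductive hypothesis applied to $Z_j$ (with trivial field extensions, so the normalization in play is that of $Z_j$) then yields that $\lambda|_{\pi_1^\pet(Z_j,\cdot)}$ is discrete for all $j$. Finally I would combine this via the Van Kampen presentation: \ref{structure theorem when the singular locus is connected} when $Z$ is connected, and the dévissage of \ref{structure theorem when singular locus is disconnected} when it is not — in the latter case also using that for an open immersion $V\hookrightarrow\tilde X_i$ of connected normal schemes (and likewise into a component of the normalization of some $Z_j$) the map $\pi_1^\et(V,\cdot)\to\pi_1^\et(\tilde X_i,\cdot)$ is surjective, so discreteness of $\lambda$ on $\pi_1^\et(\tilde X_i,\cdot)$ (resp.\ on $\pi_1^\pet(Z_j,\cdot)$) passes to the open piece. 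Either way $\pi_1^\pet(X,x)$ is built from the Noohi groups $\pi_1^\et(\tilde X_i,\cdot)$, the Noohi groups $\pi_1^\pet(Z_j,\cdot)$, and discrete free groups of finite rank by finitely many Noohi coproducts and Noohi quotients — the groups attached to components of $\tilde Z$ appear in the relevant $\mathbf{VK}$-constructions only as sources of relation homomorphisms, not as free factors (cf.\ \ref{vk-cons-2}\,(i)). By \ref{coproduct factors through discrete group} and \ref{Noohi quotient factors through discrete group} (this is where the Hausdorffness of $G$ enters), $\lambda$ is discrete iff each of its restrictions to these building blocks is: this holds for $\pi_1^\et(\tilde X_i,\cdot)$ by hypothesis, for $\pi_1^\pet(Z_j,\cdot)$ by the above, and for discrete free groups trivially. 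Hence $\lambda$ is discrete.

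The hard part will be the construction of the auxiliary scheme $W$ in the second paragraph: a normal integral scheme mapping finitely surjectively to the normalization of $Z^{(l)}$ and also mapping to $\tilde X_{i_0}$, which is the bridge that turns discreteness on the normalizations of the irreducible components into discreteness on the singular locus and thereby drives the induction. A secondary nuisance is the disconnected-$Z$ bookkeeping, where the dévissage of \ref{structure theorem when singular locus is disconnected} must be fed only in the direction supplied by the surjectivity $\pi_1^\et(V,\cdot)\twoheadrightarrow\pi_1^\et(\tilde X_i,\cdot)$.
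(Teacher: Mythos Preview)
Your proposal is correct and follows essentially the same route as the paper's proof: reduce to $\tilde X_A=\tilde X$ via \ref{discrete quotient etale case}, construct an auxiliary normal scheme mapping both to $\tilde X$ and over $Z$ (your $W$ is exactly the paper's $\tilde Z_C$, just built more explicitly via a fibre product), use this bridge together with the dimension induction to get discreteness on $\pi_1^\pet$ of the pieces of the singular locus, and then assemble with the Van Kampen presentation and Lemmas~\ref{coproduct factors through discrete group}, \ref{Noohi quotient factors through discrete group}. The only organizational difference is that the paper first reduces to $Z$ connected (via the d\'evissage of \ref{structure theorem when singular locus is disconnected}) and then runs the dimension induction once, whereas you run the dimension induction on all $Z_j$ simultaneously and handle the d\'evissage at the end; your observation that $\pi_1^\et(V)\twoheadrightarrow\pi_1^\et(\tilde X_i)$ for a dense open $V$ in a normal connected $\tilde X_i$ is precisely what is needed (and tacitly used in the paper) to make the d\'evissage step go through in either order.
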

\begin{proof} We just have to show the ``if'' direction. By
    \ref{discrete quotient etale case}, we may assume that
    \(K_i=\kappa(\eta_i)\) for all \(i\), \emph{i.e.} $\tilde{X}_A=\tilde{X}$. By \ref{structure
theorem when singular locus is disconnected},  \ref{coproduct
factors through discrete group} and \ref{Noohi quotient factors
through discrete group}, we may do an induction to reduce the
proposition to the case where
the  singular locus of $Z\subseteq X_\red$ is connected.  So we are
in the situation of Lemma \ref{structure theorem
    when the singular locus is connected}.

    We want to show that the
    induced
    map  $\pi_1^\pet(Z,x)\to G$ is discrete. This would finish the
    proof by applying \ref{coproduct
factors through discrete group} and Lemma \ref{Noohi quotient factors
through discrete group} again.

There is a ring $C\coloneqq \prod_{j\in J}L_j$,  where each
$L_j$ is a finite extension of the residue
field of a generic point of $Z$, such that the
    normalization $\tilde{Z}_C$ of
    $Z$ in $C$
lifts the inclusion $Z\hookrightarrow X$ to a map $\tilde{Z}_C\to \tilde{X}$.
Thanks to the lifting,  all  the maps from the pro-étale
fundamental groups of the connected components of $\tilde{Z}_C$
to $G$ factor through a discrete quotient. Since the dimension
of $Z$ is smaller than that of $X$, we complete our proof by
induction.
\end{proof}

\begin{rmk} Proposition \ref{when the proetale representation factors through a
    discrete one} can also be proved using \cite[Remark 3.20]{Lara19}. Indeed, after we have reduced to the case
    $\tilde{X}_A=\tilde{X}$, we can directly apply \ref{coproduct
factors through discrete group} and \ref{Noohi quotient factors
through discrete group} together with \emph{loc. cit.}
        to finish the proof, because in \emph{loc. cit.}  $\pi_1^\pet(X,x)$ is
        written as a Noohi
        quotient of the
    Noohi coproduct of the fundamental groups of the $\tilde{X}_i$s and a free
    group.
\end{rmk}

\section*{Acknowledgements}
The first named author's work was a part of the project
``Kapibara'' supported by the funding from the European Research Council (ERC)
under the European Union’s Horizon 2020 research and innovation programme
(grant agreement No. 802787).

The third named author would like to thank the Direct Grant
(No. 4053343) of the College of Science of The Chinese University of Hong Kong
for support.
\printbibliography
\end{document}